\newtheorem{theorem}{Theorem}[section]
\newtheorem{lemma}[theorem]{Lemma}
\newtheorem{corollary}[theorem]{Corollary}
\theoremstyle{remark}
\newtheorem{remark}[theorem]{Remark}
\newcommand\Mpax{M^{(\alpha)}_{p}(x)}
\newcommand\Mpasx{M^{(\alpha)*}_{p}(x)}
\newcommand\Npax{N_{p}^{(\alpha)}(x)}
\newcommand\Npatilx{\widetilde{N}^{(\alpha)}_{p}(x)}
\newcommand\MpaxInt{\int_0^\lambda \Mpax  \, \mathrm d\alpha}
\newcommand\ceilalphak{\lceil \alpha k \rceil}
\newcommand\floorOmalphak{\lfloor (1-\alpha) k \rfloor}
\newcommand\freGamxi{\frac{e^{-\gamma \xi}}{\Gamma(1+\xi)}}
\newcommand\Npox{N_{p, 1}(x)}
\newcommand\Npaox{N^{(\alpha)}_{p, 1}(x)}
\newcommand\Npatx{N^{(\alpha)}_{p, 2}(x)}
\newcommand\OOk{\left(1+O\left(\frac1k\right)\right)}
\newcommand\frpOmalk{\{(1-\alpha)k\}}
\newcommand\chiexpfr{\chi^{\frpOmalk}}
\newcommand\nuexpfr{\nu^{\frpOmalk}}
\newcommand\alphbetcom{\left(\frac\beta\alpha\right)^\alpha\left(\frac{1-\beta}{1-\alpha}\right)^{1-\alpha}}
\newcommand\TDelPar{\left(\frac{2-\delta}\alpha\right)}
\newcommand\TDelAlp{\left(\frac{2-\delta}\alpha\right) \log_2 p}
\newcommand\betalp{\beta_\alpha}
\newcommand\betalpPl{\betalp + \epsilon}
\newcommand\betalpM{\betalp - \epsilon}
\newcommand\SmallBetaExp{1-2\beta-\nu^\alpha(1-\beta)}
\newcommand\dalp{\, \mathrm d\alpha}
\newcommand\deta{\, \mathrm d\eta}
\newcommand\err{\mathcal E}
\newcommand\epse{\epsilon/8}
\newcommand\epsf{\frac {\epsilon}{4}}
\newcommand\epsff{\frac\epsilon4}
\newcommand\nuepse{\nu_{\epse}}
\newcommand\betaepse{\beta_{\epse}}
\newcommand\explgxeps{\exp((\log x)^\epsilon)}
\newcommand\explgxoeps{\exp((\log x)^{1-\epsilon})}
\title{The distribution of intermediate prime factors}
\author{Nathan McNew}
\email{nmcnew@towson.edu}
\address{Department of Mathematics \\ Towson University \\ Towson, MD 21252}
\author{Paul Pollack}
\email{pollack@uga.edu}
\author{Akash Singha Roy}
\email{akash01s.roy@gmail.com}
\address{Department of Mathematics \\ University of Georgia \\ Athens, GA 30602}
\thanks{P.P. is supported by the NSF under award DMS-2001581.}
\subjclass{Primary 11N37; Secondary 11A51, 11N25}
\keywords{intermediate prime factors, anatomy of integers, Mertens' theorem}
\begin{document}

\begin{abstract}
    Let $P^{\left(\frac 12\right)}(n)$ denote the middle prime factor of $n$ (taking into account multiplicity). More generally, one can consider, for any $\alpha \in (0,1)$, the $\alpha$-positioned prime factor of $n$, $P^{(\alpha)}(n)$.  It has previously been shown that $\log \log P^{(\alpha)}(n)$ has normal order $\alpha \log \log x$, and its values follow a Gaussian distribution around this value. We extend this work by obtaining an asymptotic formula for the count of $n\leq x$ for which $P^{(\alpha)}(n)=p$, for primes $p$ in a wide range up to $x$.  We give several applications of these results, including an exploration of the geometric mean of the middle prime factors, for which we find that $\frac 1x \sum_{1<n \le x} \log P^{\left(\frac 12 \right)}(n) \sim  A(\log x)^{\varphi-1}$, where $\varphi$ is the golden ratio, and $A$ is an explicit constant. Along the way, we obtain an extension of Lichtman's recent work on the ``dissected'' Mertens' theorem sums $\sum_{\substack{P^+(n) \le y \\ \Omega(n)=k}} \frac{1}{n}$ for large values of $k$.
\end{abstract}

\maketitle

\section{Introduction}

Starting with \cite{DKLuca13} various papers have considered the distributional properties of the ``middle prime factor'' of an integer.  Suppose the prime factorization of an integer $n>1$ is written as \[n=q_1^{a_1}q_2^{a_2} \cdots q_{\omega(n)}^{a_{\omega(n)}}=p_1p_2 \cdots p_{\Omega(n)},\]
with $q_1< q_2 < \cdots <q_{\omega(n)}$ and $p_1 \leq p_2 \leq \cdots \leq p_{\Omega(n)}$. Call $Q^{(\frac 12)}(n) \coloneqq q_{\lceil \omega(n)/2 \rceil}$ the middle prime factor of $n$ without considering multiplicity and $P^{(\frac 12)}(n) \coloneqq p_{\lceil \Omega(n)/2 \rceil}$ the middle prime factor of $n$ considering multiplicity. 

The study of the middle prime factor without considering multiplicity was first taken up in \cite{DKLuca13}, 
where the authors obtained an asymptotic for the reciprocal sum of the middle prime factors of $n$ up to $x$, 
\[\sum_{1<n\leq x} \frac{1}{Q^{(\frac 12)}(n)} = \frac x{\log x} \exp\left((1+o(1))\sqrt{2\log_2 x \log_3 x}\right).\]
Here and throughout this paper we write $\log_k x$ to denote the $k$-fold iterated natural logarithm.  The above asymptotic was significantly sharpened by Ouellet in \cite{Ouellet17}; that paper also considered the problem generalized to the $\alpha$-positioned prime factor $Q^{(\alpha)}(n) \coloneqq q_{\lceil \alpha \omega(n)\rceil}$. In the remainder, we similarly define\footnote{As in our prior paper \cite{MPS}, we use a slightly different definition of the $\alpha$-positioned prime factor than that given before by De Koninck, Doyon, and Ouellet. 
 They take the prime factor in position $\max\{1,\lfloor \alpha (\Omega(n)+1)\rfloor\}$ rather than position $\lceil \alpha \Omega(n) \rceil$  as we do here.  These two definitions often give the same position and never differ by more than 1, and the results quoted here from other papers go through with either definition.} 
$P^{(\alpha)}(n) \coloneqq p_{\lceil \alpha \Omega(n)\rceil}$. (For convenience, we henceforth define $P^{(\alpha)}(1) \coloneqq 1$.) The asymptotic formula for the reciprocal sum of the middle prime factors considering multiplicity \cite{DoyOue18} is surprisingly different, \[\sum_{n\leq x} \frac{1}{P^{(\frac 12)} (n)} = c \frac{x}{\sqrt{\log x}}\left(1 + O\left(\frac{1}{\log_2 x}\right)\right)\]
for an explicit constant $c=\displaystyle{\frac{9}{4\sqrt\pi}\sum_p \frac{1}{p^2-2p}\prod_{3\leq q<p}\frac{1-\frac{1}{2q}}{1-\frac{2}{q}}\prod_{q}\frac{(1-\frac{1}{q})^{\frac 12}}{1-\frac{1}{2q}}}$. 

Despite their different reciprocal sums, both $\log_2 Q^{(\alpha)}(n)$ and $\log_2 P^{(\alpha)}(n)$ have normal order $\alpha \log_2 x$ (see \cite{DKK14}) and both of these values are distributed according to the Gaussian law. In particular \cite{DKDO19} shows for fixed $\epsilon \in (0,\frac 18)$, $\alpha \in (0,1)$ and $|t| \ll (\log_2 x)^\epsilon$ that \begin{equation}\frac 1x \#\left\{n \le x: \frac {\log_2 P^{(\alpha)}(n)-\alpha\log_2 x}{\sqrt{\log_2 x}} < t\right\} = \Phi\left(\frac{t}{\sqrt{\alpha(1-\alpha)}}\right) + O_\alpha\left(\frac{1}{\sqrt{\log_3 x}}\right)\label{eq:normal}\end{equation}
where $\Phi(\tau) = \frac{1}{\sqrt{2\pi}} \int_{-\infty}^\tau e^{-v^2/2} dv$ is the Gaussian probability distribution function. 

Recently in \cite{MPS}, the authors of this paper studied the distribution of the values of $P^{(\alpha)}(n)$ in a variety of settings. In particular this $\alpha$-positioned prime factor obeys Benford's leading digit law, and is equidistributed in coprime residue classes modulo $q$ where $q$ can be nearly as large as, but not significantly larger than, $(\log x)^{c(\alpha)}$, for $c(\alpha) \coloneqq 1-2^{-\alpha/(1-\alpha)}$. 

\section{Results}
In this paper we investigate the distributional properties of the function $P^{(\alpha)}(n)$ in much greater detail. In particular, we determine, for a wide range of prime numbers $p$, an asymptotic for the number of integers up to $x$ that have $p$ as the middle (or $\alpha$-positioned) prime factor. We define \[\Mpax \coloneqq \#\{n\leq x : P^{(\alpha)}(n) = p\}\] and set \[ \beta \coloneqq \frac{\log_2 p}{\log_2 x}\] (so that $\log p = (\log x)^\beta$).  Throughout the remainder of the paper, $p$ and $\beta$ will be related by this expression. We first consider the middle prime factor (the case when $\alpha = \frac 12$).

\begin{theorem} \label{thm:middle} Let $\epsilon>0$ and suppose that $p$ is a prime number, $p \to \infty$.  Then if either $\beta<\frac 15-\epsilon$ or $\frac 15 +\epsilon < \beta <1-\epsilon$ we have
\begin{equation}M^{\left( \frac 12 \right)}_p(x)= \begin{cases}\displaystyle{\left(1+O_\epsilon\left(\sqrt{\frac{\log_3 x}{\log_2 x}}\right)\right) C_\beta \displaystyle{\frac{x}{p(\log x)^{1-2\sqrt{\beta(1-\beta)}}\sqrt{\log_2 x}}}} & \text{if  }\frac{1}{5} +\epsilon < \beta <1-\epsilon,\\
\vspace{-2mm}\\
\displaystyle{\left(1+O_\epsilon\left(\sqrt{\frac{\log_3 x}{\log_2 x}} + \frac{(\log_2 p)^{-1/2}}{(\log p)^{\epsilon^2}}\right)\right) C\displaystyle{\frac{x}{p(\log x)^{\frac 12 -\frac 32 \beta}}}} & \text{if  }0 <\beta<\frac{1}{5} - \epsilon, 
\end{cases}\label{eq:mideqModif}\end{equation}
where \begin{align}
C_\beta &\coloneqq \frac{\exp\left(\frac{\gamma(1-2\beta)}{\sqrt{\beta(1-\beta)}}\right)}{\Gamma\left(1+\sqrt{\frac{\beta}{1-\beta}}\right)}\frac{\sqrt{\beta}+\sqrt{1-\beta}}{2\sqrt{\pi}\beta^{1/4}(1-\beta)^{3/4}}\prod_{q \text{ prime}}\left(1-\frac{1}{q}\right)^{\sqrt{\frac{1-\beta}{\beta}}}\left(1-\parbox{1.08cm}{$\frac{\mbox{\tiny $\sqrt{\tfrac{1-\beta}{\beta}}$}}{\mbox{\normalsize $ q$ }}$}\right)^{-1} \label{eq:Cbetaval}\\
C &\coloneqq \frac{3e^{\frac{3\gamma}{2}}}{4\sqrt{\pi}}\prod_{q> 2 \text{ prime}} \left(1+\frac{1}{q(q-2)}\right)=1.523555\ldots. \nonumber
\end{align}
\end{theorem}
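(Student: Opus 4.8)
The plan is to count the $n$ with $P^{(1/2)}(n)=p$ through the unique factorization $n=ap^{i}b$ with $P^{+}(a)<p$, $i=v_p(n)\ge 1$, and $P^{-}(b)>p$. Writing $j=\Omega(a)$ and $\ell=\Omega(b)$, the requirement $P^{(1/2)}(n)=p$ is equivalent to the purely combinatorial constraint $j<\lceil(j+i+\ell)/2\rceil\le j+i$, which up to the ceiling confines $\ell$ to a window of length $O(i)$ about $j$; in particular the dominant term is $i=1$ (where $\ell=j+1$), and larger $i$ are suppressed by a factor $p^{-(i-1)}$. Summing over $b$ first and then over $a,i$ expresses $M^{(1/2)}_p(x)$ as a sum over admissible $(j,i,\ell)$ of $\sum_{P^{+}(a)<p,\,\Omega(a)=j}\#\{b\le x/(ap^{i}):P^{-}(b)>p,\ \Omega(b)=\ell\}$. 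Since the inner count will turn out to be $\approx\frac{x}{ap^{i}}$ times a quantity essentially independent of $a$ and $i$, the $a$-summation reduces to the \emph{dissected Mertens sum} $\sum_{P^{+}(a)<p,\,\Omega(a)=j}1/a$. Thus everything comes down to two asymptotic inputs, which are then combined by a Laplace/saddle-point argument in $j$.

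For the $b$-count (with $t\asymp x/p$ and $\ell$ in a bounded-width window about its eventual saddle value), I would begin from $\sum_{P^{-}(b)>p}z^{\Omega(b)}b^{-s}=\prod_{q>p}(1-zq^{-s})^{-1}$, factor out $\zeta(s)^{z}$ together with an Euler product convergent for $\Re s>\tfrac12$ and the finite product $\prod_{q\le p}(1-zq^{-s})$, and apply Perron's formula with a Hankel contour around $s=1$; this yields $\#\{b\le t:P^{-}(b)>p,\ \Omega(b)=\ell\}\approx\frac{t}{\log t}\,[z^{\ell}]\,\frac{e^{-\gamma z}}{\Gamma(z)}\bigl(\tfrac{\log t}{\log p}\bigr)^{z}$, and a saddle-point evaluation of the coefficient (saddle $z_0\approx\ell/(\log_2 t-\log_2 p)$) produces the factor $e^{-\gamma z_0}/\Gamma(1+z_0)$ of the eventual constant plus one factor $(\log_2 x)^{-1/2}$. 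For the dissected Mertens sum, $\sum_{P^{+}(a)<p,\,\Omega(a)=j}1/a=[z^{j}]\prod_{q<p}(1-z/q)^{-1}$, the behaviour bifurcates according to whether $j$ is smaller or larger than $2\log_2 p$, and the two branches of the theorem correspond exactly to this dichotomy. When $\beta>\tfrac15$ the relevant $j$ stays below $2\log_2 p$: the saddle sits at a regular point of $\prod_{q<p}(1-z/q)^{-1}\sim e^{\gamma z}K(z)(\log p)^{z}$ (with $K(z)=\prod_q(1-1/q)^{z}(1-z/q)^{-1}$), the Sathe--Selberg/Lichtman evaluation applies, and this supplies the Euler product and the factor $e^{\gamma\sqrt{(1-\beta)/\beta}}$ visible in $C_\beta$. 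When $\beta<\tfrac15$ the relevant $j$ exceeds $2\log_2 p$, and $[z^{j}]$ is instead governed by the pole of $(1-z/2)^{-1}$ — i.e.\ by $a$ that are essentially large powers of $2$ — giving $\sum_{P^{+}(a)<p,\,\Omega(a)=j}1/a\sim 2^{-j}\prod_{2<q<p}(1-2/q)^{-1}\sim2^{-j}\tfrac{e^{2\gamma}}{4}(\log p)^{2}\prod_{q>2}\bigl(1+\tfrac1{q(q-2)}\bigr)$; establishing this uniformly is exactly the extension of Lichtman's dissected-Mertens estimates to large $\Omega$ referenced in the introduction, and it accounts for $C$, the factor $e^{3\gamma/2}$, and the exponent $\tfrac12-\tfrac32\beta$.

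Inserting both inputs and pulling out $x/p$, $M^{(1/2)}_p(x)$ becomes a sum over $j$ — with $\ell$ pinned near $j$ by the $\alpha=\tfrac12$ constraint, and $i$ summed against its geometric tail — of (dissected Mertens sum at $j$) times ($b$-saddle quantity at $\ell\approx j$). In each regime this is a Laplace-type sum with Gaussian-shaped summand of width $\asymp\sqrt{\log_2 x}$, peaked at $j^{*}=\sqrt{\beta(1-\beta)}\log_2 x$ (resp.\ $\tfrac{1-\beta}{2}\log_2 x$), with saddle parameters $u^{*}=\sqrt{(1-\beta)/\beta}$ on the $a$-side and $z_0=1/u^{*}=\sqrt{\beta/(1-\beta)}$ on the $b$-side (resp.\ $u^{*}=\tfrac{1-\beta}{2\beta}$ and $z_0=\tfrac12$). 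Evaluating the sum converts a $\sqrt{\log_2 x}$ into an integral and produces the stated main term: the exponent $1-2\sqrt{\beta(1-\beta)}$ (resp.\ $\tfrac12-\tfrac32\beta$), and the leftover algebraic constant — the factor $\tfrac{\sqrt{\beta}+\sqrt{1-\beta}}{2\sqrt{\pi}\,\beta^{1/4}(1-\beta)^{3/4}}$ in $C_\beta$ emerging on combining the three Gaussian normalizations (the two saddle points and the Laplace sum) with $\log_2 p=\beta\log_2 x$ and $\log(x/(ap^{i}))=\log x+O(\cdot)$.

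The main obstacle will be \emph{uniformity}. Both inputs must be made effective with error terms uniform over the whole $j$-window and over $p$ in the stated range, and the delicate point is the dissected Mertens sum near the threshold $j\approx 2\log_2 p$: its quality degrades as $\beta\to\tfrac15$ (where the two regimes collide), which is exactly why a neighbourhood of $\beta=\tfrac15$ must be excluded and why the second branch carries the extra error $\tfrac{(\log_2 p)^{-1/2}}{(\log p)^{\epsilon^{2}}}$. Controlling the contribution of atypically large $a$ or $i$, and tracking the discrepancy between $\log(x/(ap^{i}))$ and $\log x$ (and between $\log_2(x/(ap^{i}))$ and $\log_2 x$) throughout the saddle-point analysis, is what produces the overall error $\sqrt{\log_3 x/\log_2 x}$.
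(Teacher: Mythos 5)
Your outline is, in substance, the paper's own argument: the paper likewise factors $n$ around the middle prime, as $n=ApB$ with $P^+(A)\le p\le P^-(B)$ (handling the multiplicity of $p$ through the parity of $\Omega(n)$ rather than by pulling out $p^{v_p(n)}$), replaces the count of $p$-rough $B$ by Alladi's results (your Perron/Hankel-contour derivation would essentially be reproving Theorem \ref{thm:Alladi}; note also that a second regime, $p\le\exp((\log_2 x)^3)$, needs the variant Theorem \ref{thm:Alladi2} with the factor $g(p,\mu)$, and your analytic route would have to cover that range uniformly as well), estimates the dissected Mertens sum by Lichtman's theorem below $2\log_2 p$ and by the new large-$k$ extension (Theorem \ref{thm:ImprExtnLicht1}, exactly the pole-at-$z=2$ phenomenon you describe), and carries out the Laplace step in the number of prime factors using Norton-type truncated exponential estimates (Lemma \ref{lem:NortonAppGen}). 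Your identification of the saddle locations, the two exponents, and the reason $\beta$ near $\tfrac15$ and near $1$ must be excluded all match the paper.

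One concrete slip you should fix, because it affects the constants: for $i=1$ the constraint $j<\lceil(j+1+\ell)/2\rceil\le j+1$ allows \emph{both} $\ell=j$ and $\ell=j+1$ (the odd and even values of $\Omega(n)$), not only $\ell=j+1$, and you cannot then evaluate the $b$-count ``at $\ell\approx j$'' as if it were locally constant: increasing $\ell$ by one multiplies the rough-number count by roughly $\log u/\ell$, which at the saddle equals $\sqrt{(1-\beta)/\beta}$ (respectively $2$ when $\beta<\tfrac15$) --- a bounded constant, not $1+o(1)$. Summing the two admissible values of $\ell$ with their distinct weights is precisely what produces the factor $\sqrt{\beta}+\sqrt{1-\beta}$ in $C_\beta$ and the factor $3$ in $C$; in the paper this appears as the split into $\overline{M}_p(x)$ and $\underline{M}_p(x)$ with $\underline{C}_\beta=\overline{C}_\beta\sqrt{(1-\beta)/\beta}$ and $\underline{C}=2\overline{C}$. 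With that bookkeeping done (and the $i\ge 2$ terms discarded at relative cost $O(1/p)$, as you indicate), your route delivers the stated theorem.
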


While the expression for the constant $C_\beta$ above depends on $\beta$ in a complicated way, we observe that $C_{\frac{1}{2}} = \sqrt{\frac{2}{\pi}}$.

The difference in the asymptotic formulae for $M^{\left( \frac12\right)}_{p}(x)$ in the two ranges of $\beta$ considered above is largely a manifestation of the difference in the asymptotic behavior of the sum of reciprocals of $p$-smooth numbers having $k$ prime divisors, in the two cases when $k/\log_2 p$ is less than or greater than $2$ (and bounded away from $2$). For $k \le (2-\epsilon)\log_2 p$, work of Lichtman \cite{Lichtman} (Theorem \ref{thm:lichtman}) provides the desired information, while in Theorem \ref{thm:ImprExtnLicht1}, we extend Lichtman's result to the case $k \ge (2+\epsilon)\log_2 p$.

We can similarly obtain a version of this theorem that holds for general $\alpha$, however we need to introduce some additional notation before we can state this theorem.

First define the constants (depending on $\alpha$ and $\beta$) \[\chi \coloneqq \frac{(1-\alpha)\beta}{(1-\beta)\alpha}\hspace{1cm}\text{ and }\hspace{1cm}\nu \coloneqq 2^{-\frac{1}{1-\alpha}}, \] expressions which appear frequently in the statements below.  We also define $\rho_{\chi,\alpha}$ and $\rho_{\nu,\alpha}$,
 \[\rho_{c,\alpha} \coloneqq \lim_{N \to \infty} \frac{1}{N} \sum_{n=1}^N c^{\{n (1-\alpha)\}} \]
where $\{\theta\} = \theta - \lfloor \theta\rfloor$ denotes the fractional part of a real number, and $c$ will be taken to be either $\chi$ or $\nu$. Note that $\rho_{\chi, \alpha} = 1$ 
whenever $\alpha=\beta$, since in this case $\chi =1$. 

The constant $\rho_{c,\alpha}$ is the long term average of $c$ raised to the fractional part of the integer multiples of $1-\alpha$. As such, the behavior of these sums depends on whether $\alpha$ is rational or irrational.  If $1-\alpha=\frac{a}{b}$ is rational with $\gcd(a,b)=1$, we find that \[\rho_{c,\alpha}=\frac{1}{b}\sum_{i=0}^{b-1}c^{i/b} = 
\begin{cases}
\ 1 &\text{ if }c=1\\
\displaystyle{\frac{c-1}{b(c^{1/b}-1)}} &\text{ if }c \ne 1.
\end{cases}\] 
On the other hand, if $\alpha \ne \beta$ is irrational, it follows from the equidistribution theorem (see the proof of Lemma \ref{lem:beattyModified} below for more explicit statements) that \[ \rho_{c,\alpha} = \int_0^1 c^t dt = \frac{c-1}{\log c}.\] 

Finally, we define the value \begin{equation*}  \beta_\alpha \coloneqq \frac{1}{1+2^{\frac{1}{1-\alpha}}(\alpha^{-1}-1)} = \frac{1}{1+\nu^{-1}(\alpha^{-1}-1)}.\end{equation*}
As we will see below, for fixed $\alpha$, the behaviour of $\Mpax$ has a phase transition for $\beta$ on either side of this value, analogous to the transition at $\beta = \frac 15$ when $\alpha=\frac 12$. Note that $\beta_\alpha < \alpha$ for all $\alpha \in (0,1)$. 

We can now state the following generalization of Theorem \ref{thm:middle}. 

\begin{theorem} \label{thm:AlphaModified} Fix $\epsilon>0$ and $\alpha \in (0, 1)$. Suppose that $\beta \in (0, 1)$ satisfies either $0<\beta< \beta_\alpha-\epsilon$ or $\beta_\alpha +\epsilon < \beta <1-\epsilon$ and that $\alpha$ is rational. %
Then we have, as $x, p \rightarrow \infty$,
\begin{equation*}
\begin{split}
\Mpax =\!
\begin{cases}
\displaystyle{\!\left(\!1+O_{\alpha, \epsilon}\left(\sqrt{\frac{\log_3 x}{\log_2 x}}\right)\!\right) \displaystyle{\frac{C_{\beta,\alpha}x}{p(\log x)^{1-\left(\frac{\beta}{\alpha}\right)^\alpha\left(\frac{1-\beta}{1-\alpha}\right)^{1-\alpha}}\sqrt{\log_2 x}}}} & \text{if   }\beta_\alpha +\epsilon < \beta <1\!-\epsilon, \\[25pt]
\displaystyle{\!\left(\!1\!+O_{\alpha, \epsilon}\!\left(\!\! \left(\frac{\log_3 x}{\log_2 x}\right)^{\!\!\frac 14} \!{+}\, \frac{(\log_2 p)^{- \frac 12}}{(\log p)^{\epsilon^2}}\right)\!\!\right)\!\displaystyle{\frac{C_\alpha x}{p(\log x)^{ 1-2\beta-2\nu(1-\beta)}}}} %
& \text{if   }0 <\beta<\betalp - \epsilon, 
\end{cases}
\end{split}
\end{equation*}
where
\begin{align*}
C_{\beta,\alpha} &\coloneqq \frac{e^{\gamma(\chi^{\alpha-1}-\chi^\alpha)}}{\Gamma\left(1+\chi^\alpha\right)}\frac{\chi^{\frac 32\alpha-1}\rho_{\chi,\alpha}}{\sqrt{2\pi \alpha(1-\beta)}}\prod_{q \text{ prime}}\left(1-\frac{1}{q}\right)^{\chi^{\alpha-1}}\left(1-\frac{\chi^{\alpha-1}}{q}\right)^{-1}, \\
C_\alpha &\coloneqq \frac{\nu^{\alpha} \rho_{\nu,\alpha}e^{\gamma(2-\nu^\alpha)}}{2(1-\alpha)	\Gamma(1+\nu^\alpha)}\prod_{q> 2 \text{ prime}} \left(1+\frac{1}{q(q-2)}\right).
\end{align*}
The dependence of the implied constants on $\alpha$ comes from the distance of $\alpha$ from $0$ and $1$, as well as the size of the denominator of $\alpha$. 

The same asymptotic equalities hold true (without any effective error term) for each fixed irrational $\alpha$. 
For almost all $\alpha$ in this range, they hold with an explicit multiplicative error of
\begin{equation*} 
    1+O_{\alpha, \epsilon}\left(\frac{(\log_3 x)^{3/4} (\log_4 x)^{1+\epsilon}}{(\log_2 x)^{1/4}} + \frac1{(\log p)^{\epsilon_0} (\log_2 p)^{1/2}} \right). 
\end{equation*}

Finally, for any parameter $\err= o(1)$ as $x \to \infty$, %
we have, uniformly in $\beta \in (\epsilon, 1-\epsilon)$  and $\alpha \in (\beta-\err, \beta+\err)$, 
\begin{equation*}
\Mpax = \left(1+O\left(\err + \sqrt{\frac{\log_3 x}{\log_2 x}}\right)\right) \displaystyle{\frac{C_{\beta,\beta}x}{p(\log x)^{1-\left(\frac{\beta}{\alpha}\right)^\alpha\left(\frac{1-\beta}{1-\alpha}\right)^{1-\alpha}}\sqrt{\log_2 x}}}.
\end{equation*}
Here the implied constant is independent of $\alpha$. 
\end{theorem}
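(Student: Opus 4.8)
To establish the last formula I would return to the proof of the first (upper-$\beta$) case of Theorem~\ref{thm:AlphaModified} and track precisely where the implied constant there acquires its dependence on $\alpha$ — and in particular on the denominator of $\alpha$ — then observe that on the diagonal $\alpha\in(\beta-\err,\beta+\err)$ this dependence is negligible. As a preliminary, note we are safely inside the upper-$\beta$ regime: since $\nu^{-1}=2^{1/(1-\alpha)}>2$, we have $\betalp<\tfrac{\alpha}{2-\alpha}$, so $\alpha-\betalp\geq\tfrac{\alpha(1-\alpha)}{2-\alpha}$; with $\beta\in(\epsilon,1-\epsilon)$ and $\err=o(1)$, for large $x$ this forces $\alpha\in(\epsilon/2,1-\epsilon/2)$ and $\beta-\betalp\gg_\epsilon 1$. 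Hence every ``distance of $\alpha$ from $0$ and $1$'' and ``distance of $\beta$ from $\betalp$ and $1$'' implied constant that occurs in the first case becomes $O_\epsilon(1)$, uniformly over the diagonal range.

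Next I would rerun that proof with $\alpha$ kept as a free parameter. Its shape is: split $\Mpax$ by $\Omega(n)=k$ and write $n=apb$ with $a$ being $p$-smooth and carrying $\ceilalphak-1$ prime factors and $b$ being $p$-rough and carrying $k-\ceilalphak$ of them; evaluate $\sum_a 1/a$ by Lichtman's theorem (Theorem~\ref{thm:lichtman}); handle the sum over $b$ together with the constraint $apb\leq x$ by Sathe--Selberg-type counts of integers with a prescribed number of prime factors; and finally sum the resulting contributions over $k$ in a window of length $\asymp\sqrt{\log_2 x}$ about a value $k_0$ of order $\log_2 x$. Each ingredient is uniform once $\alpha$ is kept away from $0$ and $1$, with a single exception: the step in which a weighted sum $\sum_k w_k\,\chi^{\{(1-\alpha)k\}}$, with non-negative weights $w_k$, is replaced by $\rho_{\chi,\alpha}\sum_k w_k$. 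The error in this replacement is controlled by the equidistribution of the Beatty-type sequence $\{(1-\alpha)k\}$, which is precisely where the denominator (or Diophantine type) of $\alpha$ enters; it is also the only place where the constant produced differs from $C_{\beta,\beta}$, all remaining factors of $C_{\beta,\alpha}$ being fixed smooth functions of $\alpha$ and $\chi$.

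On the diagonal this step collapses. From $\chi-1=\tfrac{\beta-\alpha}{(1-\beta)\alpha}$ we get $|\chi-1|=O_\epsilon(\err)$, hence $\chi^{\{(1-\alpha)k\}}=1+O_\epsilon(\err)$ term by term, so $\sum_k w_k\,\chi^{\{(1-\alpha)k\}}=(1+O_\epsilon(\err))\sum_k w_k$ for any non-negative weights; and $\rho_{\chi,\alpha}=1+O_\epsilon(\err)$ for the same reason, since $\chi^{t}=1+O_\epsilon(\err)$ for all $t\in[0,1]$. Thus this step now costs only $O_\epsilon(\err)$, with no reference to an equidistribution rate and no dependence on the denominator of $\alpha$. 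For the constant, each of $e^{\gamma(\chi^{\alpha-1}-\chi^{\alpha})}$, $\Gamma(1+\chi^\alpha)^{-1}$, $\chi^{\frac32\alpha-1}$, $(2\pi\alpha(1-\beta))^{-1/2}$, and $\prod_q(1-1/q)^{\chi^{\alpha-1}}(1-\chi^{\alpha-1}/q)^{-1}$ is Lipschitz in $\alpha$ (through $\alpha$ and $\chi$) with $O_\epsilon(1)$ constant on the diagonal range, and $\rho_{\chi,\alpha}=1+O_\epsilon(\err)$ as above; since $C_{\beta,\beta}=(2\pi\beta(1-\beta))^{-1/2}\asymp_\epsilon 1$, it follows that $C_{\beta,\alpha}=C_{\beta,\beta}(1+O_\epsilon(\err))$. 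Folding these $O_\epsilon(\err)$ contributions into the $O_\epsilon\!\bigl(\sqrt{\log_3 x/\log_2 x}\,\bigr)$ error already present, and leaving the exponent $1-\alphbetcom$ unchanged, yields the asserted formula with an implied constant independent of $\alpha$.

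The main obstacle is the bookkeeping of the second step: one must go carefully through the proof of the first case of Theorem~\ref{thm:AlphaModified} and confirm that, apart from the single Beatty-averaging step isolated above, nothing in it secretly uses the equidistribution rate of $\{(1-\alpha)k\}$ — or the denominator of $\alpha$ in any other guise — once $\alpha$ is bounded away from $0$ and $1$. Beyond that verification, the only new input needed is the elementary observation that $\chi=1+O_\epsilon(\err)$ throughout the diagonal range.
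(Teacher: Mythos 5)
Your proposal addresses only the final assertion of the theorem (the uniform statement for $\alpha \in (\beta-\err, \beta+\err)$), and it does so by ``returning to the proof of the first case of Theorem~\ref{thm:AlphaModified}'' and tracking constants there. Since the statement to be proved \emph{is} Theorem~\ref{thm:AlphaModified}, this is circular as written: the substantive content of the theorem --- the asymptotic formulas for rational $\alpha$ in both ranges of $\beta$ with the constants $C_{\beta,\alpha}$ and $C_\alpha$, and the corresponding assertions for fixed irrational $\alpha$ and for almost all $\alpha$ --- is never established. In the paper these parts occupy essentially all of the work: the factorization $n=ApB$ with $\Omega(A)=\lceil\alpha k\rceil-1$, Alladi's theorems for the rough factor (including the separate regime $\beta \le 3\log_3 x/\log_2 x$, which needs Theorem~\ref{thm:Alladi2} rather than Theorem~\ref{thm:Alladi}), Lichtman's theorem for $k\le(2-\delta)\log_2 p$ together with the new extension Theorem~\ref{thm:ImprExtnLicht1} for the small-$\beta$ case $\beta<\betalp-\epsilon$ (your sketch mentions only Theorem~\ref{thm:lichtman}, which cannot produce the constant $C_\alpha$ or the exponent $1-2\beta-2\nu(1-\beta)$), the negligibility of $N^{(\alpha)}_{p,2}(x)$ for $\beta>\betalp+\epsilon$ via the positivity of $F(\alpha,\beta,\delta)$, and, for irrational and almost-all $\alpha$, the equidistribution/Koksma--Khintchine discrepancy input in Lemma~\ref{lem:beattyModified}. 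None of this is supplied or even outlined beyond one sentence, so as a proof of the stated theorem there is a genuine and large gap. A smaller technical point: the window over $k$ that carries the main term has length $\asymp\sqrt{\log_2 x\,\log_3 x}$ (the paper's $E_w = 6\sqrt{w\log w}$), not $\asymp\sqrt{\log_2 x}$; with a window of width only $\sqrt{\log_2 x}$ the Norton tail bound $\exp(-E^2/2W)$ is $O(1)$ and the outside contribution is not negligible.

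For the one part you do treat, your mechanism coincides with the paper's: on the diagonal one has $\chi = 1+O_\epsilon(\err)$, hence $\chi^{\{(1-\alpha)k\}} = 1+O_\epsilon(\err)$ termwise and $\rho_{\chi,\alpha}=1+O_\epsilon(\err)$, so the Beatty-averaging step (the only place the denominator of $\alpha$ enters) costs only $O(\err)$ --- this is exactly \eqref{eq:beattynearbeta} of Lemma~\ref{lem:beattyModified} --- and the identification $C_{\beta,\alpha} = C_{\beta,\beta}\,(1+O(\err))$ via smoothness of $C_{\beta,\alpha}/\rho_{\chi,\alpha}$ in $(\alpha,\beta)$ is the paper's argument that $C_{\beta,\alpha} = \rho_{\chi,\alpha}H(\alpha,\beta)$ with $H$ differentiable on a compact set. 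Your preliminary observation that the diagonal range forces $\beta-\betalp\gg_\epsilon 1$, so only the upper-$\beta$ case is relevant there, is also correct and matches the paper. But the ``main obstacle'' you flag --- verifying that no other step uses the equidistribution rate of $\{(1-\alpha)k\}$ --- is precisely the verification the paper performs by proving the first two parts in full with uniform control in $\alpha$ away from $0$ and $1$; flagging it does not discharge it, and without the earlier parts of the theorem proved, the final assertion has nothing to rest on.
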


The obstruction to uniformity for irrational $\alpha$ comes from known bounds on the discrepancy of the Kronecker sequence $\{(1-\alpha)n\}_{n=1}^N$, see the proof of Lemma \ref{lem:beattyModified}. However, adapting some of the arguments for the above result, we can also give the following bound on $\Mpax$, which is completely uniform in all $\alpha \in (0, 1)$ and in $\beta$ away from $0$ and $1$. 
\begin{theorem}\label{thm:MpaBound}
Fix $\epsilon \in (0, 1/2)$. We have uniformly for $\alpha \in (0, 1)$ and $\beta \in (\epsilon, 1-\epsilon)$,
$$\Mpax \ll \frac x{p\sqrt{\log_2 x}}.$$
\end{theorem}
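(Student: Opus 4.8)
The plan is to split every $n$ counted by $\Mpax$ at the prime $p$ and play the two resulting Mertens-type sums against one another. Write $n = m_1 p^e m_2$ with $P^+(m_1)<p$, $P^-(m_2)>p$, $e=v_p(n)\ge 1$, and put $i=\Omega(m_1)$, $j=\Omega(m_2)$. Since the copies of $p$ in $n$ occupy exactly the slots $i+1,\dots,i+e$ of the sorted prime factorization of $n$, the condition $P^{(\alpha)}(n)=p$ is equivalent to $i<\alpha(i+e+j)\le i+e$; for fixed $i,e$ this confines $j$ to an interval $J=J(i,e)$ of length $e/\alpha$ centred near $\tfrac{1-\alpha}{\alpha}i$. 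Truncating the $m_2$-sum by $m_2\le x/(m_1 p^e)$,
\[
\Mpax \;\le\; \sum_{e\ge 1}\ \sum_{\substack{m_1\\ P^+(m_1)<p}} \#\Big\{m_2\le \tfrac{x}{m_1 p^e}\colon P^-(m_2)>p,\ \Omega(m_2)\in J(\Omega(m_1),e)\Big\}.
\]

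I would estimate the rough factor with the Hardy--Ramanujan inequality $\#\{m_2\le Y\colon P^-(m_2)>p,\ \Omega(m_2)=j\}\ll\frac{Y}{\log Y}\frac{(\log_2 Y-\log_2 p+O(1))^{j-1}}{(j-1)!}$ for $j\ge 1$ (induction on $j$ from Chebyshev and Mertens), summed over $j\in J$ using $\sum_{j\ge 1}\Lambda^{j-1}/(j-1)!=e^{\Lambda}\asymp\log Y/\log p$. With $\mu:=\log_2 Y-\log_2 p$ this gives, for an absolute $c>0$,
\[
\#\{m_2\le Y\colon P^-(m_2)>p,\ \Omega(m_2)\in J\}\;\ll\;\frac{Y}{\log p}\cdot\min\!\Big(1,\tfrac{|J|+1}{\sqrt{\mu+2}}\Big)\cdot\exp\!\big(-c\,\mathrm{dist}(J,\mu)^2/(\mu+2)\big),
\]
the first factor being the local density of $\Omega(m_2)$ near its typical value $\mu$, the exponential the penalty for $J$ lying away from it. De Bruijn's smooth-number estimates and the Rankin bound $\sum_{P^+(m_1)<p,\ \Omega(m_1)=i}1/m_1\le z^{-i}\prod_{q<p}(1-z/q)^{-1}$ ($1<z<2$; replaceable by Theorem~\ref{thm:ImprExtnLicht1} if one wants sharp constants) discard the parts with $m_1 p^e>\sqrt x$ or $\Omega(m_1)>2\log_2 p$, each easily $\ll x/(p\sqrt{\log_2 x})$; elsewhere $Y\asymp x/(m_1 p^e)$, $\log Y\asymp\log x$, $\mu=(1-\beta+o(1))\log_2 x$, and after summing the remaining $m_1$ with weight $1/m_1$ and over $e$ (the $e\ge 2$ part costing $O(1/p)$),
\[
\Mpax\;\ll\;\frac{x}{\log p}\sum_{e\ge1}\frac1{p^e}\sum_{\substack{m_1\colon P^+(m_1)<p\\ \Omega(m_1)\le 2\log_2 p}}\frac1{m_1}\,\min\!\Big(1,\tfrac{e/\alpha+1}{\sqrt{\log_2 x}}\Big)\exp\!\Big(-c'\tfrac{\mathrm{dist}(J(\Omega(m_1),e),\mu)^2}{\log_2 x}\Big).
\]

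It remains to bound this in an $\alpha$-uniform way, which I would do in three cases. If $\alpha\ge\epsilon/2$, every $J$ has length $\le 2e/\epsilon$, so the first factor is $\ll_\epsilon e/\sqrt{\log_2 x}$; since $\sum_{P^+(m_1)<p}1/m_1\asymp\log p$ and $\sum_{e\ge1}e\,p^{-e}\asymp 1/p$, this already gives $\Mpax\ll_\epsilon x/(p\sqrt{\log_2 x})$. If $1/(2\log_2 x)<\alpha<\epsilon/2$ — so $\alpha$ is bounded away from $\beta$, as $\beta>\epsilon$ — I keep only the exponential: $\mathrm{dist}(J(i,e),\mu)\gtrsim\tfrac{1-\alpha}{\alpha}|i-i_\ast|$ with $i_\ast:=\tfrac{\alpha}{1-\alpha}(1-\beta)\log_2 x<i_0:=\log_2 p$, and the bulk of the $1/m_1$-mass (concentrated near $i_0$ by Theorem~\ref{thm:lichtman}, the tail past $2\log_2 p$ having been discarded) sits a distance $\gg(\beta-\alpha)\log_2 x$ beyond $i_\ast$, where the exponential is $\le(\log x)^{-c''}$ for a $c''=c''(\epsilon)>0$; while the $O(e)$ values of $i$ near $i_\ast$, where the exponential does nothing, are harmless because $i=i_\ast+O(e)$ lies well below $\log_2 p$, so the restricted Mertens sum there is $\ll_\epsilon(\log p)^{1-c''}/\sqrt{\log_2 p}$ (Theorem~\ref{thm:lichtman} again). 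Either way $\Mpax\ll_\epsilon x/(p(\log x)^{c''})\ll x/(p\sqrt{\log_2 x})$. Finally, when $\alpha\le 1/(2\log_2 x)$ one has $\lceil\alpha\Omega(n)\rceil=1$ unless $\Omega(n)\ge 1/\alpha$, so $\Mpax\le\#\{n\le x\colon P^-(n)=p\}+\#\{n\le x\colon p\mid n,\ \Omega(n)\ge 1/\alpha\}\ll x/(p\log p)+x/(p(\log x)^{c})$, which is $\ll x/(p\sqrt{\log_2 x})$ because $\log p\ge(\log x)^{\epsilon}$.

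The main obstacle is exactly the uniformity as $\alpha\to 0$: there the $\Omega(m_2)$-window has length $\asymp e/\alpha\to\infty$, so the local-density bound no longer yields the full $\sqrt{\log_2 x}$-saving, and one must instead recover that saving from the smooth factor being pushed into its tail — which is available precisely because small $\alpha$ keeps $\alpha$ away from $\beta$ — with the genuinely degenerate $\alpha$ peeled off by hand. Checking that this trade-off always closes, with all constants depending only on $\epsilon$, together with the accompanying size reductions, is where the work lies; the ingredients (Hardy--Ramanujan, Mertens, de Bruijn, and the dissected-Mertens theorems) are all available.
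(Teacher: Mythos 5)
Your proposal is sound in outline, but it is organized quite differently from our proof, and a few intermediate claims need repair. For comparison: we prove this bound by re-running the machinery of Theorem \ref{thm:AlphaModified} for $\alpha$ bounded away from $0$ and $1$ (factor $n=ApB$ at the $\alpha$-position, estimate the rough factor by Alladi's theorems and the smooth Mertens sum by the Rankin bound of Lemma \ref{lem:coarsebd}, splitting at $\beta\lessgtr\beta_\alpha+\delta$), and we dispose of $\alpha$ near $0$ or $1$ by a separate argument in which Lemma \ref{lem:HT} shows that $P^{(\alpha)}(n)=p$ forces $\Omega_E(n)$ to be atypical for a suitable set $E$ of large primes, giving even a power of $\log x$. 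You instead factor by value, $n=m_1p^em_2$, translate $P^{(\alpha)}(n)=p$ into the window $i<\alpha(i+e+j)\le i+e$ for $j=\Omega(m_2)$, and trade the local density $\ll 1/\sqrt{\log_2 x}$ of $\Omega(m_2)$ (window of length $\asymp e/\alpha$) against the full Mertens mass $\asymp\log p$ of the $p$-smooth part. This buys a single argument covering all $\alpha\ge\epsilon/2$, including $\alpha\to 1$, with no $\beta_\alpha$ case distinction and no need for the sharp constants of Theorems \ref{thm:lichtman} and \ref{thm:ImprExtnLicht1}; your small-$\alpha$ case is in the same spirit as our endpoint argument, since a window far above the mean $\mu\approx(1-\beta)\log_2 x$ is precisely the statement that $n$ has atypically many $p$-rough prime factors.

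Three steps need fixing, none fatal. First, the pointwise bound $\#\{m_2\le Y:\,P^-(m_2)>p,\,\Omega(m_2)=j\}\ll \frac{Y}{\log Y}\frac{(\log_2 Y-\log_2 p+O(1))^{j-1}}{(j-1)!}$ is asserted by induction, but you only need it pointwise for $j\ll_\epsilon\log u$ (in your first case the window sits at height $\le \frac2\epsilon\cdot 2\log_2 p+O(e)\ll_\epsilon (1-\beta)\log_2 x$), and in that range it follows from Theorem \ref{thm:Alladi} applied with $(x,y)$ there replaced by $(Y,p)$, legitimate since $Y\ge\sqrt x\ge p^2$ and $p\ge\exp((\log x)^\epsilon)\ge\exp((\log_2 Y)^3)$, with the factor $e^{-\gamma\xi}/\Gamma(1+\xi)\ll 1$; for larger $j$ use tail bounds only. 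Second, the factor $\exp(-c\,\mathrm{dist}(J,\mu)^2/(\mu+2))$ is false for the upper tail once $\mathrm{dist}\gg\mu$, which is exactly your middle case, where $\mathrm{dist}\asymp\frac{\beta-\alpha}\alpha\log_2 x$ can exceed $\mu$ by a factor $1/\alpha$; the correct Chernoff bound (or Lemma \ref{lem:HT} applied to the $p$-rough part) gives only $\exp(-\mu Q(1+\mathrm{dist}/\mu))\le\exp(-c\min(\mathrm{dist},\mathrm{dist}^2/\mu))$ with $Q(y)=y\log y-y+1$. This weaker bound still suffices: there $\mathrm{dist}\gg_\epsilon\log_2 x$ and $\mu\gg_\epsilon\log_2 x$, so the penalty is $(\log x)^{-c(\epsilon)}$, which beats $1/\sqrt{\log_2 x}$ after the Mertens mass $\asymp\log p$ cancels against your prefactor $x/\log p$. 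Third, your dichotomy in the middle case (bulk $i$ near $\log_2 p$ versus the $O(e)$ values of $i$ near $i_*$) omits intermediate $i$; combine the two factors instead, e.g.\ split at $(i_*+\log_2 p)/2$, noting that for every $i$ either the lower-tail Poisson bound on $\sum_{\Omega(m_1)=i}1/m_1$ or the upper-tail penalty on the rough count is $\ll(\log x)^{-c(\epsilon)}$, with $e\ge c\alpha\log_2 x$ handled by the factor $p^{-e}$. Two smaller points: the $j=0$ terms ($m_2=1$) escape your rough-count bound, but after restricting to $m_1p^e\le\sqrt x$ they concern at most $\sqrt x$ integers; and that restriction itself should be justified by a Rankin/smooth-number estimate, which uses $\log p\le(\log x)^{1-\epsilon}$. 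With these substitutions your argument closes, with constants depending only on $\epsilon$.
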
  

It is worth noting that the above bound is best possible in its range of uniformity, since equality is attained in the case $\alpha = \beta$ itself (as seen from Theorem \ref{thm:AlphaModified}). Moreover, as the proof will show, the bound in Theorem \ref{thm:MpaBound} can be improved to a power saving in $\log x$, either if $\alpha$ is close to $0$ or $1$, or if $\beta$ lies in $(\delta, \beta_\alpha + \delta)$ for any $\delta$ fixed small enough in terms of $\epsilon$.  %

Because of the presence and behavior of the $\rho_{c,\alpha}$ terms, the constants $C_\alpha$ and $C_{\beta, \alpha}$ above (the latter being viewed as a function of $\alpha$ for fixed $\beta$) 
have the property of being continuous at every irrational, but discontinuous at every rational value of $\alpha$, except at $\alpha=\beta$ (if $\beta$ is rational). At this value, when $\alpha=\beta$, the constant $C_{\alpha,\alpha}$ above simplifies to $$C_{\alpha,\alpha} = \frac{1}{\sqrt{2\pi\alpha(1-\alpha)}}$$ whether $\alpha$ is rational or not.  Using this, the normal distribution of the $\alpha$-positioned prime factor \eqref{eq:normal} follows as a corollary.  In fact we can improve the error term in that expression to the following.
\begin{corollary} Fix $\alpha \in (0,1)$. We have, uniformly for all real $t$, 
\begin{equation}\frac 1x \#\left\{n \le x: \frac {\log_2 P^{(\alpha)}(n)-\alpha\log_2 x}{\sqrt{\log_2 x}} < t\right\} = \Phi\left(\frac{t}{\sqrt{\alpha(1-\alpha)}}\right) + O_\alpha\left(\frac{\left(\log_3 x\right)^{3/2}}{\sqrt{\log_2 x}}\right).\label{eq:improvednormal}\end{equation}
\end{corollary}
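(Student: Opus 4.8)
The plan is to express the counting function on the left of \eqref{eq:improvednormal} as a sum of $\Mpax$ over primes $p$, substitute the asymptotics of Theorem~\ref{thm:AlphaModified}, and carry out a Laplace‑type evaluation. Write $L \coloneqq \log_2 x$, $\sigma \coloneqq \sqrt{\alpha(1-\alpha)}$, and let $g(\beta) \coloneqq \alphbetcom$ be the exponent occurring in Theorem~\ref{thm:AlphaModified}; one computes $g(\alpha) = 1$, $g'(\alpha) = 0$, $g''(\alpha) = -1/\sigma^2$, so that $1 - g(\beta) = \frac{(\beta-\alpha)^2}{2\sigma^2} + O_\alpha(|\beta-\alpha|^3)$. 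Since for $n > 1$ the factor $P^{(\alpha)}(n)$ is a prime dividing $n$ (hence $\le x$), while $n = 1$ contributes $O(1)$, we have, writing $\beta \coloneqq \log_2 p/L$,
\[
\#\!\left\{ n \le x : \frac{\log_2 P^{(\alpha)}(n) - \alpha L}{\sqrt L} < t \right\} = \sum_{\substack{p \text{ prime} \\ \beta \,<\, \alpha + t/\sqrt L}} \Mpax + O(1).
\]
Using monotonicity of the left side in $t$, it suffices to establish \eqref{eq:improvednormal} when $|t| \le V$, where $V \coloneqq \sqrt{\log_3 x}$: for $|t| > V$ the left side and the target $\Phi(t/\sigma)$ both lie within $1 - \Phi(V/\sigma) \ll_\alpha \frac1V e^{-V^2/(2\sigma^2)} \ll (\log_2 x)^{-2} \ll \frac{(\log_3 x)^{3/2}}{\sqrt L}$ (using $\sigma^2 \le \frac14$) of $0$ or of $1$, so the case $|t| \le V$ forces the general one. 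Fix $|t| \le V$ and put $\beta^\ast \coloneqq \alpha + t/\sqrt L = \alpha + o(1)$.

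I would split the $p$‑sum at $\beta = \alpha - V/\sqrt L$ and at $\beta = \alpha - \delta_0$, with $\delta_0 = \delta_0(\alpha) \in (0, \alpha - \betalp)$ a fixed constant (recall $\betalp < \alpha$), into three ranges: (A) $\alpha - V/\sqrt L < \beta < \beta^\ast$; (B) $\alpha - \delta_0 < \beta \le \alpha - V/\sqrt L$; (C) $\beta \le \alpha - \delta_0$. The main term comes from (A): there $|\beta - \alpha| \le \err_0 \coloneqq V/\sqrt L = \sqrt{\log_3 x/\log_2 x} = o(1)$, so the final displayed formula of Theorem~\ref{thm:AlphaModified} applies with $\err = \err_0$ and gives $\Mpax = (1 + O(\err_0)) \frac{C_{\beta,\beta}\, x}{p\, (\log x)^{1-g(\beta)}\sqrt L}$, with $C_{\beta,\beta} = (2\pi\beta(1-\beta))^{-1/2}$. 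Passing from the sum to an integral by Mertens' theorem (the error is negligible since $\log p = (\log x)^\beta \ge (\log x)^{\alpha/2}$ throughout (A)), substituting $\beta = \alpha + v/\sqrt L$, and using $(1-g(\beta))L = \frac{v^2}{2\sigma^2} + O(|v|^3/\sqrt L)$ for $|v| \le V$ — hence $(\log x)^{1-g(\beta)} = e^{v^2/(2\sigma^2)}\bigl(1 + O((\log_3 x)^{3/2}/\sqrt L)\bigr)$ — the contribution of (A) becomes
\[
\left( 1 + O\!\left( \frac{(\log_3 x)^{3/2}}{\sqrt L} \right) \right) \frac{x}{\sigma\sqrt{2\pi}} \int_{-V}^{t} e^{-v^2/(2\sigma^2)} \, \mathrm{d}v \;=\; x\,\Phi(t/\sigma) + O_\alpha\!\left( \frac{x(\log_3 x)^{3/2}}{\sqrt L} \right),
\]
where completing the $v$‑integral to $(-\infty, t)$ costs only $O(e^{-V^2/(2\sigma^2)}) = O((\log_2 x)^{-2})$. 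This produces both the main term and the error term of \eqref{eq:improvednormal}; the power $\frac32$ arises from balancing the cubic Taylor remainder $V^3/\sqrt L$ against the Gaussian tail $e^{-V^2/(2\sigma^2)}$, which forces the window width $V \asymp \sqrt{\log_3 x}$.

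Only upper bounds are needed for (B) and (C). In (B), Theorem~\ref{thm:AlphaModified} (the case $\betalp + \epsilon < \beta < 1 - \epsilon$, or the corresponding statement for irrational $\alpha$, valid without an effective error term) yields $\Mpax \ll_\alpha \frac{x}{p\,(\log x)^{1-g(\beta)}\sqrt L}$; as $1 - g$ is decreasing on $(0, \alpha)$, its infimum over (B) is $1 - g(\alpha - V/\sqrt L) = \frac{\log_3 x}{2\sigma^2 L}(1 + o(1))$, and since $\sum_p 1/p \ll L$ over the primes of (B), their total contribution is $\ll_\alpha x L^{1/2} (\log_2 x)^{-1/(2\sigma^2)}(1 + o(1)) \ll x (\log_2 x)^{1/2 - 1/(2\sigma^2)} \le x(\log_2 x)^{-3/2} \ll \frac{x(\log_3 x)^{3/2}}{\sqrt L}$ (using again $\sigma^2 \le \frac14$). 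In (C), the contribution is at most $\#\{n \le x : P^{(\alpha)}(n) \le \exp((\log x)^{\alpha - \delta_0})\}$, which I would bound by $\ll_{\alpha,\delta_0} x(\log x)^{-c}$ for some $c = c(\alpha,\delta_0) > 0$ via a standard large‑deviation estimate: after discarding the $\ll x(\log x)^{-c}$ integers with $\Omega(n) > 10L$, a surviving $n$ with $P^{(\alpha)}(n) \le y \coloneqq \exp((\log x)^{\alpha-\delta_0})$ satisfies $\Omega(n; y) \ge \lceil \alpha\Omega(n) \rceil$ together with $\Omega(n) \le 10L$, which forces either $\Omega(n; y)$ to exceed its typical size $(\alpha - \delta_0)L$, or $\Omega(n)$ to fall below its typical size $L$, by a fixed positive proportion; both are large‑deviation events of density $\ll (\log x)^{-c}$ (provable by Rankin's trick, estimating the contribution of the prime factors exceeding $y$ by the fundamental lemma of sieve theory). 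Power‑saving in (B) and (C) is essential here, since Theorem~\ref{thm:MpaBound} alone would give only $\sum_{p \le y} \frac{x}{p\sqrt L} \asymp x\sqrt L$. Summing the three ranges and dividing by $x$ gives \eqref{eq:improvednormal} for $|t| \le V$, hence in general.

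The step I expect to be the main obstacle is the treatment of range (C), together with matching (B) to (A): one must have the large‑deviation bound at precisely the strength needed and must control the transitional zone $\alpha - \delta_0 < \beta \le \alpha - V/\sqrt L$ with a loss of at most a bounded power of $\log_2 x$. By contrast, the Gaussian evaluation in (A) is routine once the quadratic vanishing of $1 - g$ at $\beta = \alpha$, the identity $C_{\beta,\beta} = (2\pi\beta(1-\beta))^{-1/2}$, and the choice $V \asymp \sqrt{\log_3 x}$ are in hand.
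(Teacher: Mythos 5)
Your proposal is correct and follows essentially the route the paper indicates for this corollary: summing $\Mpax$ over primes via partial summation, using the final (uniform, $\alpha$ near $\beta$) case of Theorem \ref{thm:AlphaModified} for the Gaussian main term exactly as in the proofs of Theorems \ref{thm:RpNormal} and \ref{thm:avglogmid}, and disposing of the ranges with $\beta$ away from $\alpha$ by the first case of Theorem \ref{thm:AlphaModified} together with the large-deviation estimates of Lemma \ref{lem:HT}, with the reduction to $|t|\le\sqrt{\log_3 x}$ handled by monotonicity and Gaussian tails as in Theorem \ref{thm:RpNormal}. The only cosmetic point is that your range (B) bound carries a $(1+o(1))$ in the exponent of $\log_2 x$, but the resulting estimate still sits comfortably inside the claimed error term.
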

The proof proceeds by partial summation over primes of $\Mpax$, combined with the methods used in the proof of Theorems \ref{thm:RpNormal} and \ref{thm:avglogmid} below.

We can similarly derive as a corollary a description of the distribution of the relative position of a fixed prime $p$ among the prime divisors of integers divisible by $p$.  For an integer $n$ that is divisible by $p$, we write $n= p_1 \cdots p_{\Omega(n)}$, where \[p_1\leq p_2 \leq \cdots \leq p_{k-1} \leq  p=p_k < p_{k+1} \leq \cdots \leq p_{\Omega(n)}\] so that $p$ is the $k$-th smallest prime factor of $n$ (and if $n$ is divisible by $p^2$, we take the largest index corresponding to a factor of $p$). 
For such an $n$ we then denote by $R_p(n)=\frac{k}{\Omega(n)}$ the relative position of $p$ among the prime factors of $n$.   It isn't hard to show that the normal order of $R_p(n)$ is $\beta=\frac{\log_2 p}{\log_2 x}$; we show that it is in fact normally distributed around this value.

\begin{theorem}\label{thm:RpNormal}
Fix $\epsilon > 0$. As $x \rightarrow \infty$, we have
\begin{equation}\label{eq:RpNormal}
\frac1{x/p}\#\left\{n \le x: \, p|n, \frac {R_p(n)-\beta}{(\log_2 x)^{-1/2}} < t\right\} = \Phi\left(\frac{t}{\sqrt{\beta(1-\beta)}}\right) + O_{\epsilon}\left(\frac1{(\log_2 x)^{1/3}}\right),
\end{equation}
uniformly in $\beta \in (\epsilon, 1-\epsilon)$ and all real $t$. %
\end{theorem}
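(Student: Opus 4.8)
The plan rests on an exact identity: for every $n\le x$ with $p\mid n$ and $p^2\nmid n$ --- all but $O(x/p^2)=o(x/p)$ such $n$ --- the largest index at which $p$ appears in the nondecreasing prime factorization of $n$ equals $\Omega_{\le p}(n)\coloneqq\sum_{q\le p}v_q(n)$, so, writing $\Omega_{>p}(n)\coloneqq\Omega(n)-\Omega_{\le p}(n)$,
\[
R_p(n)-\beta=\frac{\Omega_{\le p}(n)}{\Omega(n)}-\beta=\frac{(1-\beta)\,\Omega_{\le p}(n)-\beta\,\Omega_{>p}(n)}{\Omega(n)}.
\]
Thus I would study the completely additive function
\[
Z(n)\coloneqq\sum_q\lambda_q\,v_q(n),\qquad\lambda_q\coloneqq\begin{cases}1-\beta,&q\le p,\\-\beta,&q>p,\end{cases}
\]
which equals $(1-\beta)\Omega_{\le p}(n)-\beta\Omega_{>p}(n)$, prove a quantitative central limit theorem for it, and then transfer that statement to $R_p$ through the displayed identity.

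For the central limit theorem, the condition $p\mid n$ is inessential: by complete additivity $Z(pm)=Z(m)+(1-\beta)$, so after discarding the $O(x/p^2)$ integers with $p^2\mid n$ it suffices to understand $Z$ over $m\le y\coloneqq x/p$ (the residual condition $p\nmid m$, of density $1/p=o((\log_2 x)^{-1})$, may be dropped). Using $\log_2 p=\beta\log_2 x$ and $\log_2 y=\log_2 x+o(1)$ (valid since $\log p=(\log x)^\beta$ with $\beta<1-\epsilon$), Mertens' theorem and the prime number theorem give, over $m\le y$, that $\mathbb E[Z(m)]=(1-\beta)\log_2 p-\beta(\log_2 y-\log_2 p)+O(1)=O(1)$ and $\operatorname{Var}[Z(m)]=(1-\beta)^2\log_2 p+\beta^2(\log_2 y-\log_2 p)+O(1)=\beta(1-\beta)\log_2 x+O(1)$. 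A quantitative Erd\H{o}s--Kac theorem for additive functions --- via the method of moments, or the Kubilius model --- then gives, uniformly for $\beta\in(\epsilon,1-\epsilon)$ and $u\in\mathbb R$,
\[
\frac1y\#\Bigl\{m\le y:\ \frac{Z(m)}{\sqrt{\beta(1-\beta)\log_2 x}}<u\Bigr\}=\Phi(u)+O_\epsilon\bigl((\log_2 x)^{-c}\bigr)
\]
for some fixed $c>0$. The delicate point here is that $Z$ is a \emph{signed} superposition of contributions from two prime ranges of comparable logarithmic size --- $q\le p$ accounting for variance $\sim\beta\log_2 x$, and $p<q\le y$ for $\sim(1-\beta)\log_2 x$ --- so in the moment computation one keeps the two ranges separate, truncates the ``rough'' contribution at $q\le y$ (legitimate because $m\le y$), and checks that all implied constants depend on $\beta$ only through its distance $\epsilon$ from $0$ and $1$.

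To finish, fix $t$, put $u=t/\sqrt{\beta(1-\beta)}$, and reinstate the factor $\Omega(n)$: for $n=pm$ with $p^2\nmid n$ the first identity turns $\{R_p(n)<\beta+t(\log_2 x)^{-1/2}\}$ into $\{Z(m)<t(\log_2 x)^{-1/2}\Omega(n)-(1-\beta)\}$. Since $\Omega(n)=\log_2 x\,(1+O(\delta))$ with $\delta\coloneqq\log_3 x/\sqrt{\log_2 x}$ outside a set of $n\le x$ of size $o((\log_2 x)^{-1/3}\,x/p)$ --- a standard large-deviation bound for $\Omega$ --- the threshold on the right, after normalization by $\sqrt{\beta(1-\beta)\log_2 x}$, is $u+O(|u|\delta+(\log_2 x)^{-1/2})$ off that set. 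Sandwiching $R_p$ between the $Z$-events at the two perturbed thresholds, invoking the central limit theorem of the previous paragraph at each, and using $\Phi(u+O(|u|\delta))-\Phi(u-O(|u|\delta))\ll\delta$ for bounded $t$ (with a Gaussian-tail estimate handling large $|t|$), we obtain \eqref{eq:RpNormal} after restoring the $O(x/p^2)$ discarded at the start. The main obstacle is the middle step: a quantitative, $\beta$-uniform Erd\H{o}s--Kac theorem for the two-weight function $Z$, i.e.\ pushing uniformity in $\beta\in(\epsilon,1-\epsilon)$ through the moment (or sieve) estimates while the ranges $q\le p$ and $q>p$ contribute with opposite-signed weights; once that concentration is available, the transfer back to $R_p$ is routine bookkeeping.
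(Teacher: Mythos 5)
Your reduction is correct as far as it goes: for $p\parallel n$ one indeed has $R_p(n)=\Omega_{\le p}(n)/\Omega(n)$, so $R_p(n)-\beta = Z(n)/\Omega(n)$ with $Z(n)=(1-\beta)\Omega_{\le p}(n)-\beta\,\Omega_{>p}(n)$, the mean/variance computation for $Z$ over $m\le x/p$ is right, and the final sandwiching step (replacing $\Omega(n)$ by $\log_2 x(1+O(\log_3 x/\sqrt{\log_2 x}))$ outside a set controlled by a large-deviation bound, then using Gaussian tails for large $|t|$) is sound bookkeeping of the same kind the paper performs. This is a genuinely different route from the paper, which never introduces an additive function: the paper converts the left-hand side of \eqref{eq:RpNormal} into $\log_2 x\int_0^{\lambda}M_p^{(\alpha)*}(x)\,\mathrm d\alpha$ via the identity $\int_0^{\lambda}M_p^{(\alpha)*}(x)\,\mathrm d\alpha=\sum_{R_p(n)<\lambda}1/\Omega(n)+\cdots$, and then evaluates that integral using the uniform ``$\alpha$ near $\beta$'' case of Theorem \ref{thm:AlphaModified} (plus Theorem \ref{thm:MpaBound} and known estimates for $\sum 1/\Omega(n)$), so that all the analytic work has already been done in proving Theorem \ref{thm:AlphaModified}.

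The genuine gap in your proposal is precisely the step you flag as the ``main obstacle'': the quantitative, $\beta$-uniform central limit theorem for the signed two-range additive function $Z$, and this cannot be waved through as a citation or as routine. Two concrete problems. First, even as stated your CLT only has error $O_\epsilon((\log_2 x)^{-c})$ for ``some fixed $c>0$''; the theorem asserts the exponent $1/3$, so unless you actually establish a rate $c\ge 1/3$ (uniformly in $\beta\in(\epsilon,1-\epsilon)$ and in the cut point $p$, which varies with $x$), you prove a weaker statement. Second, no off-the-shelf Erd\H{o}s--Kac theorem hands you this: the weights take two different values on the ranges $q\le p$ and $q>p$ with $\log_2 p=\beta\log_2 x$ comparable to $\log_2 x$, so a moment or characteristic-function argument must control mean values of the two-parameter multiplicative functions $m\mapsto z_1^{\Omega_{\le p}(m)}z_2^{\Omega_{>p}(m)}$ (with $|z_1|=|z_2|=1$) uniformly in $p$, or push a Kubilius-model/moment computation through with explicit, $\beta$-uniform error terms; either way this is essentially a Selberg--Delange/Alladi-type analysis of the same depth as the $\Phi_k(x,y)$ and dissected-Mertens machinery the paper builds for Theorem \ref{thm:AlphaModified}. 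So the plan is plausible and the transfer steps are fine, but the heart of the proof is missing rather than merely deferred; to complete it you would need to prove (not assert) a Berry--Esseen-type bound of quality at least $(\log_2 x)^{-1/3}$ for $Z$, uniform in $\beta\in(\epsilon,1-\epsilon)$.
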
 

We conclude with one more application of Theorem \ref{thm:middle}. If we denote by $P_k(n)$ the $k$-th \textit{largest} prime factor of $n$, Dickman \cite{Dickman}, de Bruijn \cite{dB51} and later Knuth and Trabb-Pardo \cite{KTP} investigate the average value of $\log P_k(n)$ and find, for each fixed $k$, that \[\frac 1x \sum_{n\le x} \log P_k(n) = (D_k+o(1))\log x \] where the $D_k$ are constants and in particular $D_1 = 0.624329\ldots$ is the Golumb-Dickman constant.\footnote{In fact, $\frac 1x \sum\limits_{n \le x} \log P_1(n) = D_1\log x + D_1(1-\gamma) + O\left(\exp(-(\log x)^{3/8-\epsilon})\right)$.}

This result can be interpreted as saying that ``on average'' the largest prime divisor of an integer $n$ has just under $5/8$ as many digits as $n$, and for any fixed $k$ the $k$-th largest prime factor of $n$ has, on average, a fixed, positive proportion of the number of digits that $n$ has. Tenenbaum  \cite{ten99}*{Corollary 4} considers this same average for the least prime factor $P^{-}(n)$ and shows (for a complicated but explicit constant $A^-$) that \[\frac 1x \sum_{1<n \le x} \log P^{-}(n) = e^{-\gamma} \log \log x +A^- + O\left(\exp(-(\log x)^{3/8-\epsilon})\right).\] 
We investigate the same problem for the middle (or $\alpha$-positioned) prime factor of $n$.

\begin{theorem} \label{thm:avglogmid} Let $\varphi = \frac{1+\sqrt{5}}{2}$ be the golden ratio, and $\varphi' = \frac{1}{\varphi} = \varphi-1 = \frac{\sqrt{5} -1}{2}=0.6180\ldots$ its reciprocal. The average value of the logarithm of the middle prime factor of the integers up to $x$ satisfies
\begin{equation}\frac 1x \sum_{n \le x} \log P^{\left(\frac 12 \right)}(n) =  A(\log x)^{\varphi'} \left(1+O\left(\frac{(\log_3 x)^{3/2}}{\sqrt{\log_2 x}}\right)\right)\label{eq:golden}\end{equation}  
where \[A \coloneqq \frac{e^{-\gamma}}{\varphi!}\frac{\varphi + 1}{\sqrt{5}}\prod_p\left(1-\frac{1}{p}\right)^{\varphi'}\left(1-\frac{\varphi'}{ p }\right)^{-1} = 1.313314\ldots\] 
and $\varphi! \coloneqq \Gamma(\varphi+1) = \Gamma(\varphi-1) = \Gamma(\varphi')$. 
\end{theorem}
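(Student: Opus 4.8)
The plan is to expand the sum over the value of the middle prime factor,
\[\sum_{n\le x}\log P^{\left(\frac12\right)}(n)=\sum_{p}(\log p)\,M^{\left(\frac12\right)}_p(x),\]
and feed in Theorem~\ref{thm:middle}. Recall $\log p=(\log x)^{\beta}$ with $\beta=\log_2 p/\log_2 x$. In the range $\frac15+\epsilon<\beta<1-\epsilon$ the first case of Theorem~\ref{thm:middle} gives
\[(\log p)\,M^{\left(\frac12\right)}_p(x)=\Bigl(1+O_\epsilon\bigl(\sqrt{\tfrac{\log_3 x}{\log_2 x}}\,\bigr)\Bigr)\frac{C_\beta\,x}{p\sqrt{\log_2 x}}\,(\log x)^{f(\beta)},\qquad f(\beta):=\beta-1+2\sqrt{\beta(1-\beta)}.\]
Summing over $p$ and using Mertens' theorem (partition the $\beta$–range into short intervals and apply $\sum_{p\le t}1/p=\log_2 t+M+O(1/\log t)$ on each), the weighted prime sum $\sum_p p^{-1}h(\beta(p))$ becomes $\log_2 x\int_0^1 h(\beta)\,\mathrm d\beta$, so I expect
\[\sum_{n\le x}\log P^{\left(\frac12\right)}(n)=(1+o(1))\,x\sqrt{\log_2 x}\int_{1/5+\epsilon}^{1-\epsilon}C_\beta\,(\log x)^{f(\beta)}\,\mathrm d\beta\;+\;(\text{boundary terms}).\]

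Next I would run Laplace's method on the integral, writing $(\log x)^{f(\beta)}=e^{f(\beta)\log_2 x}$. Since $4\beta(1-\beta)+(1-2\beta)^2=1$, one finds $f''(\beta)=-\tfrac12(\beta(1-\beta))^{-3/2}<0$ on all of $(0,1)$, so $f$ is concave; and $f'(\beta)=1+\tfrac{1-2\beta}{\sqrt{\beta(1-\beta)}}$ decreases strictly from $+\infty$ to $-\infty$, vanishing exactly at the root $\beta^*=\tfrac{5+\sqrt5}{10}$ of $5\beta^2-5\beta+1=0$. Thus $\beta^*$ is the unique maximum of $f$ on $(0,1)$, it lies well inside $(\tfrac15+\epsilon,1-\epsilon)$, and using $\beta^*(1-\beta^*)=\tfrac15$ one gets $f(\beta^*)=\beta^*-1+\tfrac{2}{\sqrt5}=\tfrac{\sqrt5-1}{2}=\varphi'$ and $f''(\beta^*)=-\tfrac{5\sqrt5}{2}$. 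Laplace's method then yields
\[\int_{1/5+\epsilon}^{1-\epsilon}C_\beta\,(\log x)^{f(\beta)}\,\mathrm d\beta=(1+o(1))\,C_{\beta^*}\,(\log x)^{\varphi'}\sqrt{\frac{2\pi}{|f''(\beta^*)|\,\log_2 x}},\]
so that $\tfrac1x\sum_{n\le x}\log P^{(\frac12)}(n)=(1+o(1))\,A\,(\log x)^{\varphi'}$ with $A=C_{\beta^*}\sqrt{2\pi/|f''(\beta^*)|}$.

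Identifying $A$ with the stated constant comes down to a handful of golden–ratio identities at $\beta^*$. From $\tfrac{\beta^*}{1-\beta^*}=\tfrac{3+\sqrt5}{2}=\varphi^2$ we get $\sqrt{\beta^*/(1-\beta^*)}=\varphi$, so $\Gamma(1+\sqrt{\beta^*/(1-\beta^*)})=\varphi!$, and $\sqrt{(1-\beta^*)/\beta^*}=\varphi'$, which is precisely the Euler–product exponent in $C_\beta$; moreover $\tfrac{1-2\beta^*}{\sqrt{\beta^*(1-\beta^*)}}=-1$, collapsing the $\gamma$–exponential in $C_\beta$ to $e^{-\gamma}$, and $\tfrac{\sqrt{\beta^*}+\sqrt{1-\beta^*}}{(\beta^*)^{1/4}(1-\beta^*)^{3/4}}=5^{1/4}(\varphi+1)$. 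Substituting these into \eqref{eq:Cbetaval} gives $C_{\beta^*}=\tfrac{e^{-\gamma}}{\varphi!}\cdot\tfrac{5^{1/4}(\varphi+1)}{2\sqrt\pi}\prod_p(1-\tfrac1p)^{\varphi'}(1-\tfrac{\varphi'}{p})^{-1}$; since $\sqrt{2\pi/|f''(\beta^*)|}=\sqrt{4\pi/5^{3/2}}=2\sqrt\pi\cdot 5^{-3/4}$, the $5^{1/4}$ and $5^{-3/4}$ combine to the $1/\sqrt5$ in the claimed $A$, and everything else matches.

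Finally I would control the boundary contributions. For $0<\beta<\tfrac15-\epsilon$ the second case of Theorem~\ref{thm:middle} gives $(\log p)\,M^{(\frac12)}_p(x)\ll (x/p)(\log x)^{\frac52\beta-\frac12}$, and the same prime-to-integral conversion bounds this entire range by $\ll x(\log x)^{-5\epsilon/2}$ (the exponent is maximal at $\beta=\tfrac15-\epsilon$); for $\beta\in(\tfrac15-\epsilon,\tfrac15+\epsilon)$ even the trivial bound $M^{(\frac12)}_p(x)\le x/p$ gives $\ll x(\log x)^{1/5+\epsilon}$; and bounded $p$ contribute $O(x)$. The delicate range is $\beta$ near $1$, where $\sum_{p\le x}(\log p)/p\asymp\log x$ makes the trivial bound useless. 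I would split at $p=\sqrt x$: if $p>\sqrt x$ then any $n\le x$ with $P^{(\frac12)}(n)=p$ and $\Omega(n)\ge 2$ has at least $\lfloor\Omega(n)/2\rfloor+1\ge 2$ prime factors $\ge p$, forcing $n\ge p^2>x$; hence $M^{(\frac12)}_p(x)=1$ and this range contributes $O(x)$. For $\exp((\log x)^{1-\epsilon})<p\le\sqrt x$ I would prove $M^{(\frac12)}_p(x)\ll x/\bigl(p\,(\log x)^{1-\epsilon'}\sqrt{\log_2 x}\bigr)$ with $\epsilon'\to0$ as $\epsilon\to0$, using that $P^{(\frac12)}(n)=p$ forces $n$ to have strictly more prime factors $\ge p$ than $<p$, whence $\Omega(n)\ll(\log x)^\epsilon$, combined with uniform Sathe–Selberg-type upper bounds for $\#\{m\le y:\Omega(m)=j\}$ (or via the power-saving refinements of Theorem~\ref{thm:MpaBound}); this keeps the contribution at $o(x(\log x)^{\varphi'})$ since $\sum_{p\le x}(\log p)/p\ll\log x$. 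The main obstacle is exactly this uniform bound for $p$ close to $x$, together with the careful propagation of error terms through the two nested Laplace approximations — the inner one producing $M^{(\frac12)}_p(x)$ and the outer one over $\beta$ — needed to replace the $o(1)$ above by the stated relative error $O\bigl((\log_3 x)^{3/2}/\sqrt{\log_2 x}\bigr)$.
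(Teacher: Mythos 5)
Your main-term analysis is essentially the paper's argument: expand over $p$, insert the first case of Theorem~\ref{thm:middle}, convert the prime sum to an integral (the paper does this via the PNT with Riemann--Stieltjes integration by parts, you via Mertens on short intervals -- both work), and run Laplace's method at the maximizer $\beta^*=B_0=\tfrac12+\tfrac1{2\sqrt5}$. Your computations of $f(\beta^*)=\varphi'$, $f''(\beta^*)=-\tfrac{5\sqrt5}{2}$, and the golden-ratio identities ($\sqrt{\beta^*/(1-\beta^*)}=\varphi$, the $\gamma$-exponent collapsing to $-1$, the factor $5^{1/4}(\varphi+1)$) all check out and reproduce the constant $A$. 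Your handling of small $p$ is slightly different but fine (the paper simply bounds all $p\le\exp(\sqrt{\log x})$ trivially by $O(\sqrt{\log x})$, avoiding the $\beta<\tfrac15-\epsilon$ case of Theorem~\ref{thm:middle} altogether), and the observation $M^{(\frac12)}_p(x)=1$ for $p>\sqrt x$ matches the paper.

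The genuine gap is your treatment of the range $\exp((\log x)^{1-\epsilon})<p\le\sqrt x$, i.e.\ $\beta$ close to $1$, where Theorem~\ref{thm:middle} is unavailable. The constraint you extract -- that $P^{(\frac12)}(n)=p$ forces at least half the prime factors of $n$ to be $\ge p$, hence $\Omega(n)\le 2\log x/\log p\ll(\log x)^{\epsilon}$ -- is vacuous for most of this range: $(\log x)^{\epsilon}$ is far \emph{larger} than the typical value $\Omega(n)\approx\log_2 x$, so combining it with Sathe--Selberg upper bounds returns only the trivial bound $x/p$ (it becomes restrictive only when $\log x/\log p\ll\log_2 x$, i.e.\ $p$ within $x^{O(1/\log_2 x)}$-type distance of $x$). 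The parenthetical appeal to ``power-saving refinements of Theorem~\ref{thm:MpaBound}'' also does not apply as stated, since that theorem and its refinements concern $\beta$ bounded away from $1$ (the refinements are for $\alpha$ near $0$ or $1$, or $\beta$ small). The constraint one should exploit instead is quantitative in terms of $\log_2 x$: since only $O((1-\beta)\log_2 x)$ prime factors of a typical $n$ exceed $p$, the event $P^{(\frac12)}(n)=p$ forces either $\Omega(n)\le c\log_2 x$ for a small constant $c$, or $\gg\log_2 x$ prime factors exceeding $\exp((\log x)^{1-\epsilon})$ when the expected number is only about $\epsilon\log_2 x$; two applications of Lemma~\ref{lem:HT} (exactly the claim~(i) argument in the proof of Theorem~\ref{thm:MpaBound}, reused in the paper with the numerical choice $0.229$) then give $M^{(\frac12)}_p(x)\ll x/p(\log x)^{c'}$ uniformly in this range. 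Note the saving $c'$ must exceed $1-\varphi'\approx0.382$ for the contribution $\ll(\log x)^{1-c'}$ to be negligible against $(\log x)^{\varphi'}$ (the paper achieves $c'=0.42$), so the constants here cannot be chosen carelessly. Separately, your Laplace step is only carried to $(1+o(1))$; to reach the stated relative error $O((\log_3 x)^{3/2}/\sqrt{\log_2 x})$ one restricts to the window $|\beta-\beta^*|\le\sqrt{\log_3 x/\log_2 x}$, where the cubic Taylor term contributes exactly that error, and bounds the tails by monotonicity -- routine, but it is the part you deferred.
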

Note that $\varphi$ is one of the solutions to the equation $\Gamma(x+1) = \Gamma(x-1)$.  One can similarly generalize this to other values of $\alpha$.

\begin{remark}
Using Theorem \ref{thm:AlphaModified} one can similarly derive that for any fixed $0<\alpha<1$ we have $\frac 1x \sum\limits_{n \le x} \log P^{\left(\alpha \right)}(n) \sim  A_\alpha(\log x)^{B_\alpha}$, where $B_\alpha = \max\limits_{0<\beta<1}\{\beta + \left(\frac{\beta}{\alpha}\right)^\alpha\left(\frac{1-\beta}{1-\alpha}\right)^{1-\alpha}-1\}$. %
\end{remark}

\subsection*{Notation and conventions:} Most of our notation is standard. We continue to use $P^{+}(n)$ for the largest prime factor of $n$ (with $P^{+}(1)=1$) and we use $P^{-}(n)$ for the smallest prime factor of $n$ (taking $P^{-}(1)=\infty$). We say $n$ is \textsf{$y$-smooth} if $P^{+}(n) \le y$, and we call $n$ \textsf{$y$-rough} when $P^{-}(n) > y$. Given a set of primes $E$, we use $\Omega_E(n)$ to denote the number of primes of $E$ dividing $n$, counted with multiplicity; explicitly, $\Omega_E(n)\coloneqq \sum_{p^k \parallel n,\ p\in E} k$. We also denote by $E(x)$ the sum of reciprocals of the elements of $E$ up to $x$, that is, $E(x)\coloneqq \sum_{\substack{p \le x\\p \in E}} 1/p$.

Implied constants in $\ll$ and $O$-notation may always depend on any parameter declared as ``fixed''. %
In particular, they depend on $\alpha$ and $\epsilon$ unless stated otherwise. %
For rational $\alpha$, the dependence on $\alpha$ will come from the distance of $\alpha$ from $0$ and $1$, and the size of the denominator of $\alpha$. For fixed quantities $\delta$ and $\epsilon$, we shall write $\delta \ll_\epsilon 1$ to mean that $\delta \in (0, 1)$ may be fixed to be sufficiently small in terms of $\epsilon$ (we shall be using variants of this notation with $\delta$ and $\epsilon$ replaced by other fixed parameters). We write $\log_k x$ for the $k$-fold iterate of the  natural logarithm. %
\section{The exact middle prime factor}
Let $\overline{M}_p(x)$ denote the number of integers $n\leq x$ with $\Omega(n)\equiv 1 \pmod{2}$, and whose exact middle prime factor is $p$.  In order to present our argument in as simple a manner as possible we will first prove the following theorem, which may also be of some interest in its own right.

\begin{theorem} \label{thm:main} Let $\epsilon>0$ and suppose $p \to \infty$, $\beta = \frac{\log_2 p}{\log_2 x}$.  Then if either $\beta<\frac 15-\epsilon$ or $\frac 15 +\epsilon < \beta <1-\epsilon$ we have
\begin{equation}\overline{M}_p(x) = \begin{cases}\displaystyle{\left(1+O_\epsilon\left(\sqrt{\frac{\log_3 x}{\log_2 x}}\right)\right)\overline{C}_\beta \displaystyle{\frac{x}{p(\log x)^{1-2\sqrt{\beta(1-\beta)}}\sqrt{\log_2 x}}}} & \text{if  }\frac{1}{5} +\epsilon < \beta <1-\epsilon,\\
\vspace{-2mm}\\
\displaystyle{\left(1+O_\epsilon\left(\sqrt{\frac{\log_3 x}{\log_2 x}} + \frac{(\log_2 p)^{-1/2}}{(\log p)^{\epsilon^2}}\right)\right)\overline{C} \displaystyle{\frac{x}{p(\log x)^{\frac 12 -\frac 32 \beta}}}} &  \text{if  }0 <\beta<\frac{1}{5} - \epsilon, 
\end{cases}\label{eq:exactmideqModif}\end{equation}
where \begin{align*}
\overline{C}_\beta &\coloneqq \frac{\exp\left(\frac{\gamma(1-2\beta)}{\sqrt{\beta(1-\beta)}}\right)}{\Gamma\left(1+\sqrt{\frac{\beta}{1-\beta}}\right)}\frac{\beta^{1/4}}{2\sqrt{\pi}(1-\beta)^{3/4}}\prod_{q \text{ prime}}\left(1-\frac{1}{q}\right)^{\sqrt{\frac{1-\beta}{\beta}}}\left(1-\parbox{1.08cm}{$\frac{\mbox{\tiny $\sqrt{\tfrac{1-\beta}{\beta}}$}}{\mbox{\normalsize $ q$ }}$}\right)^{-1},\\
\overline{C} &\coloneqq \frac{e^{\frac{3\gamma}{2}}}{4\sqrt{\pi}}\prod_{q> 2 \text{ prime}} \left(1+\frac{1}{q(q-2)}\right)=0.507851\ldots.
\end{align*}

\end{theorem}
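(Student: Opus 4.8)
The plan is to sort the integers $n$ counted by $\overline{M}_p(x)$ by the value of $k$ with $\Omega(n)=2k-1$ and by the exponent $e$ for which $p^{e}\parallel n$. Writing $n=m_1p^{e}m_2$ with $P^{+}(m_1)<p<P^{-}(m_2)$, the condition that $p$ occupies the central slot $p_{k}$ is equivalent to $\Omega(m_1)\le k-1$ and $k\le\Omega(m_1)+e$; combined with $\Omega(m_1)+e+\Omega(m_2)=2k-1$ this forces $\Omega(m_1)=\Omega(m_2)=k-1$ when $e=1$, while all $e\ge 2$ together contribute $O(x/p^{2})$, which is negligible against any plausible main term since $p=\exp((\log x)^{\beta})$ exceeds every fixed power of $\log x$. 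Thus, up to an admissible error,
\[
\overline{M}_p(x)=\sum_{k\ge 1}\;\sum_{\substack{m_1:\,P^{+}(m_1)<p\\ \Omega(m_1)=k-1}}\#\{m_2\le x/(m_1p):\,P^{-}(m_2)>p,\ \Omega(m_2)=k-1\}.
\]

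First I would estimate the inner count, the number of $p$-rough integers below $x/(m_1p)$ with exactly $k-1$ prime factors. A Sathe--Selberg/Selberg--Delange analysis of $\prod_{q>p}(1-wq^{-s})^{-1}=\zeta(s)^{w}\,\mathcal{G}(s,w)\prod_{q\le p}(1-wq^{-s})$, with $\mathcal{G}$ holomorphic and zero-free for $\Re s>\tfrac12$, yields, uniformly for $\mu\coloneqq(k-2)/(\log_2 Y-\log_2 p)$ in a compact subinterval of $[0,2)$, a main term of shape $\tfrac{Y}{\log Y}\cdot\tfrac{(\log_2 Y-\log_2 p)^{k-2}}{(k-2)!}\,G_{\mathrm r}(\mu)$ with $G_{\mathrm r}$ built from $1/\Gamma(1+\mu)$ and an Euler product; values of $k$ with $\mu$ outside a fixed such interval contribute negligibly to the final $k$-sum. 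Since the $m_1$ relevant to the $1/m_1$-weighted sum are small — the large-$m_1$ tail is cut off by a Rankin bound and anyway forces $k$ far from the critical range — I may replace $\log(x/(m_1p))$ by $\log x$ and factor the $m_2$-count out of the $m_1$-sum, reducing the latter to the dissected Mertens sum $\sum_{P^{+}(m_1)\le p,\,\Omega(m_1)=k-1}1/m_1$ (the contribution of $p\mid m_1$ being smaller by a factor $O(1/p)$).

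Next I would substitute the known behaviour of that Mertens sum: Lichtman's Theorem~\ref{thm:lichtman} for $k-1\le(2-\epsilon)\log_2 p$, and its extension Theorem~\ref{thm:ImprExtnLicht1} for $k-1\ge(2+\epsilon)\log_2 p$. This is exactly where the hypothesis $|\beta-\tfrac15|>\epsilon$ enters: a Laplace analysis shows the $k$-sum concentrates at $k^{*}\sim\sqrt{\beta(1-\beta)}\,\log_2 x$, and $k^{*}=2\log_2 p$ precisely when $\beta=\tfrac15$, so for $\beta$ bounded away from $\tfrac15$ the transitional band $k-1\in((2-\epsilon)\log_2 p,(2+\epsilon)\log_2 p)$ is far from $k^{*}$ and is dispatched by a crude upper bound. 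With the two asymptotics in place, for $\beta>\tfrac15$ the $k$-sum is, up to slowly varying factors frozen at $k^{*}$, the modified Bessel sum $A\sum_{j\ge 0}\tfrac{(AB)^{j}}{j!\,(j+1)!}$ with $A=\log_2 p$ and $B=\log_2 x-\log_2 p$, which is asymptotic to $A^{1/4}e^{2\sqrt{AB}}/(2\sqrt{\pi}\,B^{3/4})$; since $2\sqrt{AB}\sim 2\sqrt{\beta(1-\beta)}\log_2 x$ and $A^{1/4}/B^{3/4}=\beta^{1/4}/((1-\beta)^{3/4}(\log_2 x)^{1/2})$, multiplying by the $1/\log x$ from the rough-count lands on the power $(\log x)^{-(1-2\sqrt{\beta(1-\beta)})}(\log_2 x)^{-1/2}$. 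For $\beta<\tfrac15$ the smooth-part sum is instead governed by the simple pole of $\prod_{q\le p}(1-w/q)^{-1}$ at $w=2$ — so $\sum_{P^{+}(m_1)<p,\,\Omega(m_1)=k-1}1/m_1$ behaves like $c(\log p)^{2}\,2^{-(k-1)}$ in the relevant range — whence the $k$-sum collapses to a geometric-type series $\asymp\sum_{j}2^{-j}\tfrac{B^{j}}{j!}\asymp e^{B/2}\sim(\log x)^{(1-\beta)/2}$ with no $\sqrt{\log_2 x}$, producing the power $\tfrac12-\tfrac32\beta$.

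Finally I would assemble the constants, evaluating the slowly varying factors at the saddle points $\lambda^{*}=k^{*}/\log_2 p=\sqrt{(1-\beta)/\beta}$ (smooth side) and $\mu^{*}=k^{*}/(\log_2 x-\log_2 p)=\sqrt{\beta/(1-\beta)}$ (rough side). This should reproduce the Euler product $\prod_q(1-1/q)^{\sqrt{(1-\beta)/\beta}}(1-\sqrt{(1-\beta)/\beta}/q)^{-1}$, the factor $1/\Gamma(1+\sqrt{\beta/(1-\beta)})$, the exponential $e^{\gamma(\lambda^{*}-\mu^{*})}=\exp(\gamma(1-2\beta)/\sqrt{\beta(1-\beta)})$ coming from the mismatch between the two Mertens-type normalizations, and the algebraic factor $\beta^{1/4}/(2\sqrt{\pi}\,(1-\beta)^{3/4})$ from the Gaussian width of the Bessel sum together with the various saddle-point Jacobians; in the range $\beta<\tfrac15$ the constant $\overline{C}$ falls out of the residue at $w=2$ and the value $\mu^{*}=\tfrac12$. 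The remainder is bookkeeping of relative errors — the $\sqrt{\log_3 x/\log_2 x}$ from the Sathe--Selberg and Lichtman inputs, the extra $(\log_2 p)^{-1/2}(\log p)^{-\epsilon^{2}}$ from Theorem~\ref{thm:ImprExtnLicht1} in the small-$\beta$ case, and the truncations in $m_1$, in $e$, and in $k$. Granting Theorem~\ref{thm:ImprExtnLicht1}, the main obstacle is precisely this Laplace/saddle-point step: showing that the slowly varying factors from \emph{both} the smooth and rough inputs may be frozen at the common saddle $k^{*}$, controlling the tails and the transitional band near $k\approx 2\log_2 p$ through $\beta$ being bounded away from $\tfrac15$, and extracting the constants (the $\Gamma$-value, the Euler product, and the $e^{\gamma(1-2\beta)/\sqrt{\beta(1-\beta)}}$ factor) exactly.
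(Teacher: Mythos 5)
Your architecture is essentially the paper's: factor $n$ around the central copy of $p$, estimate the rough cofactor count by an Alladi/Selberg--Delange asymptotic, feed in Lichtman's theorem and Theorem \ref{thm:ImprExtnLicht1} for the smooth-side Mertens sums, and run a saddle-point/Laplace analysis concentrating at $k^{*}\approx\sqrt{\beta(1-\beta)}\log_2 x$ (your Bessel-sum evaluation is equivalent to the paper's $\binom{2k}{k}$--$\cosh$ computation and reproduces the constants correctly). However, two of your reductions, as stated, do not cover the full range of the theorem. The first is your dismissal of the exponents $e\ge 2$ by the trivial bound $O(x/p^{2})$ on the grounds that ``$p$ exceeds every fixed power of $\log x$'': the theorem only assumes $p\to\infty$, so in the range $0<\beta<\tfrac15-\epsilon$ the prime $p$ may grow arbitrarily slowly (e.g.\ $p\asymp\log_2 x$), in which case $x/p^{2}$ dwarfs the main term $\asymp x/\bigl(p(\log x)^{\frac12-\frac32\beta}\bigr)$ and the claim $\log p=(\log x)^{\beta}\gg\log_2 x$ is simply false. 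You would need either to keep the higher powers of $p$ inside the decomposition, as the paper does by writing $n=ApB$ with the non-strict conditions $P^{+}(A)\le p\le P^{-}(B)$, or to run the same analysis for each $e\ge2$ and show the total loses a genuine factor $\asymp 1/p$ relative to the main term; the crude bound does not suffice for small $\beta$.

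The second and more substantive gap concerns your rough-count input. You assert an asymptotic for the number of $p$-rough $m_2\le Y$ with a prescribed number of prime factors only for $\mu$ in a compact subinterval of $[0,2)$, and claim that $k$ with $\mu$ outside such an interval are negligible. But on the rough side the saddle value is $\mu^{*}=\sqrt{\beta/(1-\beta)}$, which exceeds $2$ as soon as $\beta>\tfrac45$; for $\beta\in(\tfrac45,1-\epsilon)$ the \emph{main} contribution to the $k$-sum comes precisely from $k$ with $\mu\approx\mu^{*}>2$, so your stated input cannot produce the first case of \eqref{eq:exactmideqModif} there, and the negligibility claim is false in that range. The restriction to $[0,2)$ is a feature of the smooth/full-integer side (the pole of $(1-w\,2^{-s})^{-1}$ at $w=2$), not of the rough side, where the local factor at $2$ is absent; what is actually needed (and true) is the rough-count asymptotic uniformly for $\xi\le r$ with any fixed $r$, which is exactly Alladi's Theorem \ref{thm:Alladi} in the strengthened form the paper quotes, together with Theorem \ref{thm:Alladi2} for the regime $p\le\exp((\log_2 x)^{3})$ where the $g(p,\mu)$ factor must be carried along. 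Once you replace your input by these uniform statements (and supply the $\Omega(n)\le K\log_2 x$ truncation that legitimizes replacing $\log(x/(m_1p))$ by $\log x$, which the paper gets from Nicolas's theorem rather than from a Rankin bound), the rest of your outline matches the paper's proof.
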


Analogously to $\overline{M}_p(x)$, we can define $\underline{M}_p(x)$ to be the count of those integers having an even number of prime factors, and whose middle prime factor is $p$.  $\underline{M}_p(x)$ satisfies an expression identical to \eqref{eq:mideqModif} above but with different constants $\underline{C}_\beta$ and $\underline{C}$ in place of $\overline{C}_\beta$ and $\overline{C}$ respectively, namely, $\underline{C}_\beta \coloneqq \overline{C}_\beta\sqrt{\frac{1-\beta}{\beta}}$ and $\underline{C} \coloneqq 2\overline{C} = 1.015703\ldots$. Summing the expressions for $\overline{M}_p(x)$ and $\underline{M}_p(x)$ gives the expression for $M_p(x)$ in Theorem \ref{thm:middle}.

\section{Technical Preparation} 
Before proving our main results, we state several results which will be used in the proofs. We begin with the following consequences of the classical results of Sathe--Selberg and Nicolas concerning the distribution of numbers with a given number of prime factors (see, e.g., Theorems 6.5 and 6.6 on p. 304, and Exercise 217 of \cite{tenenbaumAPNT}).

\begin{lemma}\label{lem:TEN} Fix $\delta \in (0,1)$. For all sufficiently large values of $x$,
\[ \sum_{\substack{n\le x \\ \Omega(n)=k}} 1 \ll \frac{x}{\log{x}} \frac{(\log_2 x)^{k-1}}{(k-1)!} \]
uniformly for positive integers $k \le (2-\delta)\log_2 x$, and 
\[ \sum_{\substack{n\le x \\ \Omega(n)=k}} 1 \ll \frac{x \log{x}}{2^k}\]
uniformly for $k \ge (2+\delta)\log_2 x $. 
\end{lemma}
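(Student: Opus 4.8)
The plan is to derive both estimates from the standard Sathe--Selberg--Nicolas machinery governing $\pi_k(x) := \#\{n \le x : \Omega(n) = k\}$. Recall the classical uniform bound: there is an absolute constant so that, for $1 \le k \le (2-\delta)\log_2 x$,
\[
\pi_k(x) \ll_\delta \frac{x}{\log x}\,\frac{(\log_2 x)^{k-1}}{(k-1)!},
\]
which is exactly the first displayed inequality; this appears as Theorem 6.5 (or the cited exercise) of \cite{tenenbaumAPNT}, so for the first half I would simply invoke that result, noting that the restriction $k \le (2-\delta)\log_2 x$ is precisely what keeps the ratio $z := k/\log_2 x$ bounded away from the radius of convergence issues in the relevant generating function $\sum_n z^{\Omega(n)} n^{-s}$, which has a factor $(1 - z/2^s)^{-1}$ forcing $z < 2$.

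For the second estimate, with $k \ge (2+\delta)\log_2 x$, the plan is a direct Rankin-type argument. Write $\pi_k(x) = \sum_{n \le x,\ \Omega(n) = k} 1 \le 2^{-k}\sum_{n \le x} 2^{\Omega(n)}$, since every $n$ counted contributes $2^{\Omega(n)} = 2^k$. Then it suffices to show $\sum_{n \le x} 2^{\Omega(n)} \ll x \log x$. This is classical: the Dirichlet series $\sum_n 2^{\Omega(n)} n^{-s} = \prod_p (1 - 2/p^s)^{-1}$ behaves like $\zeta(s)^2$ times an Euler product convergent for $\Re(s) > 1/2$, so a Tauberian theorem (or elementary convolution: $2^{\Omega(n)} = \sum_{d \mid n} g(d)$ with $g$ supported on squarefull-ish integers and controlled by $\sum_d |g(d)|/d < \infty$, combined with $\sum_{m \le x/d} 1 \asymp x/d$ after handling the $\log$) yields $\sum_{n \le x} 2^{\Omega(n)} \asymp x \log x$. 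Alternatively one cites Exercise 217 / Theorem 6.6 of \cite{tenenbaumAPNT} directly, where the bound $\pi_k(x) \ll x (\log x) 2^{-k}$ for $k$ in this range is stated. Either way the key point is that $2^{\Omega(n)}$ has average order $\log x$, uniformly, and the $\delta$-gap ensures the bound is uniform across all such $k$ simultaneously.

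I expect the main (minor) obstacle to be bookkeeping the uniformity in $k$ rather than any genuine difficulty: one must make sure the implied constants depend only on $\delta$ and not on the particular value of $k$ in the stated ranges. For the large-$k$ range this is automatic from the Rankin bound $\pi_k(x) \le 2^{-k} \sum_{n \le x} 2^{\Omega(n)}$, since the right-hand side no longer references $k$ except through the clean factor $2^{-k}$; for the small-$k$ range it is built into the cited Sathe--Selberg theorem. Since both inequalities are quoted essentially verbatim from \cite{tenenbaumAPNT}, the cleanest exposition is to state that they are immediate consequences of Theorems 6.5--6.6 and Exercise 217 there, reproducing the Rankin argument for the second bound only if a self-contained treatment is desired.
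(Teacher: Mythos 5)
Your treatment of the first estimate is exactly what the paper does: Lemma \ref{lem:TEN} is stated there without proof, as a direct citation of the Sathe--Selberg results in \cite{tenenbaumAPNT} (Theorem 6.5 and Exercise 217 / Theorem 6.6), so invoking Theorem 6.5 for the range $k \le (2-\delta)\log_2 x$ is fine. Likewise, your fallback for the second estimate --- citing Theorem 6.6 / Exercise 217, i.e.\ Nicolas's theorem that $\#\{n\le x:\Omega(n)=k\} \asymp x\log x/2^k$ uniformly for $k \ge (2+\delta)\log_2 x$ --- is precisely the paper's route and is correct.

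However, the self-contained Rankin argument you propose as the primary plan for the second bound does not work. The key claim $\sum_{n\le x} 2^{\Omega(n)} \ll x\log x$ is false: the Euler factor at $p=2$ in $\prod_p\left(1-2p^{-s}\right)^{-1}$ is $\left(1-2^{1-s}\right)^{-1}$, which itself has a pole at $s=1$, so the full Dirichlet series has a pole of order $3$ there (not $2$), and in fact $\sum_{n\le x}2^{\Omega(n)} \asymp x\log^2 x$. (Concretely, writing $n=2^a m$ with $m$ odd, the terms $2^a\sum_{m\le x/2^a,\,m\ \mathrm{odd}}2^{\Omega(m)} \asymp x\log(x/2^a)$ contribute comparably for every $a\le \log x/\log 2$, and summing over $a$ produces the extra $\log x$.) For the same reason your convolution variant fails: the function $g$ with $2^{\Omega}=1\ast g$ is not small on powers of $2$ ($g(2^a)$ grows geometrically), so $\sum_d |g(d)|/d$ diverges. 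Thus the inequality $\pi_k(x)\le 2^{-k}\sum_{n\le x}2^{\Omega(n)}$ only yields $\pi_k(x)\ll x\log^2 x/2^k$, a factor $\log x$ weaker than the stated bound; note that the paper's own Rankin computation (Lemma \ref{lem:coarsebd}, with $z=2-1/J$) likewise only reaches a $\log^2 y$ factor, reflecting that $z=2$ sits exactly on the singularity. Since the bound $x\log x/2^k$ is sharp in this range (Nicolas's asymptotic), no choice of Rankin parameter alone recovers it; you need either the cited theorem or a genuinely finer argument (extracting the exact power of $2$ from $n$ and treating the odd part with a Rankin parameter exceeding $2$), so you should lean on the citation, as the paper does.
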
 

The following lemma belongs to the study of the `anatomy of integers', and makes precise the claim that $\Omega_E(n) = \sum_{p^k \parallel n,\ p\in E} k$ is typically of size $\sum_{p\le x,~p\in E} 1/p$, uniformly across all sets of primes $E$. %
Although the statement below is slightly more general than Lemma 3.1 in \cite{MPS}, the same proof goes through; also compare with Theorem 08 on p.\ 5 of \cite{HT88}. 
\begin{lemma}\label{lem:HT} Fix $\epsilon_0 \in (0, 1), C_0>0$. Let $x\ge 3$ and let $E$ be a nonempty set of primes with smallest element $p_E$. With $E(x) = \sum_{p \le x,~p \in E} 1/p$, we have, for $1\le y \le \min\{C_0,(1-\epsilon_0)p_E\}$,
\begin{align*} \sum_{\substack{n \le x \\ \Omega_E(n) \ge y E(x)}} 1 &\ll x\exp(-E(x)\cdot Q(y)), 
\end{align*}
where $Q(y) \coloneqq y \log y - y + 1$ and the implied constant is absolute. When $0 < y \le 1$, the same inequality holds with the $\Omega_E(n)$ condition replaced by $\Omega_E(n) \le y E(x)$. 
\end{lemma}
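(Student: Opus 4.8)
The statement to prove is Lemma~\ref{lem:HT}, a large-deviation bound for $\Omega_E(n)$ valid uniformly over all sets of primes $E$. The plan is to run the usual Rankin-type (exponential moment) argument, choosing the free parameter so that the resulting rate function is exactly $Q(y) = y\log y - y + 1$.

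First I would set up the exponential moment. For a parameter $z \ge 1$ to be chosen, and any $n$ with $\Omega_E(n) \ge y E(x)$, we have $z^{\Omega_E(n)} \ge z^{y E(x)}$, so
\begin{equation*}
\#\{n \le x : \Omega_E(n) \ge y E(x)\} \le z^{-y E(x)} \sum_{n \le x} z^{\Omega_E(n)}.
\end{equation*}
To bound $\sum_{n \le x} z^{\Omega_E(n)}$, I would write $f(n) = z^{\Omega_E(n)}$, which is a nonnegative multiplicative function, and apply a standard mean-value bound for multiplicative functions (e.g.\ the Halberstam--Richert / Shiu-type inequality, or the elementary bound in Theorem~01 or Chapter~0 of Hall--Tenenbaum~\cite{HT88}): for $z$ bounded above, $\sum_{n \le x} z^{\Omega_E(n)} \ll x \exp\!\big(\sum_{p \le x}(f(p)-1)/p\big) = x\exp\!\big((z-1)\sum_{p \le x,\,p \in E} 1/p\big) = x\exp((z-1)E(x))$. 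The hypothesis $z \le C_0/1$-type bound (the ``$y \le C_0$'' in the statement) is what keeps $z$ bounded so that the implied constant stays absolute; here I'd want $z$ of size comparable to $y$, so $y \ll 1$ is exactly the right constraint.

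Combining, the count is $\ll x \exp\!\big((z - 1 - y\log z)E(x)\big)$, and I would optimize the exponent in $z$: the function $z \mapsto z - 1 - y\log z$ is minimized at $z = y$, giving value $y - 1 - y\log y = -(y\log y - y + 1) = -Q(y)$. This produces exactly the claimed bound $\ll x\exp(-E(x) Q(y))$. The role of the constraint $y \le (1-\epsilon_0) p_E$ is more delicate: the multiplicative function $z^{\Omega_E(n)}$ has $f(p^j) = z^j$ growing geometrically at primes $p \in E$, and for a Shiu/Halberstam--Richert bound to apply with an absolute implied constant one needs control on the prime-power contributions $\sum_{p,\, j \ge 2} f(p^j)/p^j$; requiring $z = y < (1-\epsilon_0) p_E \le (1-\epsilon_0)p$ for every $p \in E$ ensures $f(p^j)/p^j = (z/p)^j$ decays geometrically with ratio bounded away from $1$, so that sum is $O_{\epsilon_0}(1)$ and can be absorbed. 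This is the step I expect to be the main technical obstacle --- getting the mean-value estimate for $z^{\Omega_E(n)}$ with the dependence on parameters exactly as claimed (absolute constant, uniform in $E$) --- and it is precisely where one must invoke the cited results of Hall--Tenenbaum rather than reprove them. The case $0 < y \le 1$ with the reversed inequality $\Omega_E(n) \le y E(x)$ is handled symmetrically: use a parameter $z \in (0,1)$, note $z^{\Omega_E(n)} \ge z^{y E(x)}$ now because $z < 1$ and $\Omega_E(n) \le y E(x)$, and optimize the same way --- the minimizer $z = y$ now lies in $(0,1)$, $Q(y) \ge 0$ throughout, and no largeness hypothesis on $y$ relative to $p_E$ is needed since $z < 1$ automatically makes the prime-power tails converge. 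Finally I would remark that $Q(y) \ge 0$ with equality only at $y = 1$, so the bound is only nontrivial when $y$ is bounded away from $1$, which is the regime in which it will be applied.
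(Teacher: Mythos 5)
Your proposal is correct and is essentially the argument the paper itself relies on: the paper does not reprove the lemma but notes that the proof of Lemma 3.1 of \cite{MPS} (compare Theorem 08 of \cite{HT88}) goes through, and that proof is exactly your Rankin/Chernoff exponential-moment argument, bounding $\sum_{n\le x} z^{\Omega_E(n)} \ll x\exp((z-1)E(x))$ by a Halberstam--Richert-type mean value theorem and then optimizing at $z=y$ to produce $Q(y)$. You also correctly identify the roles of the hypotheses: $y\le C_0$ keeps $z$ bounded (so the mean-value constant depends only on the fixed $C_0$), and $y\le(1-\epsilon_0)p_E$ makes the prime-power contributions $\sum_{p\in E,\,j\ge 2}(z/p)^j$ uniformly $O_{\epsilon_0,C_0}(1)$, while the lower-tail case $0<y\le 1$ needs neither restriction since there $z=y\le 1$.
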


We denote by $\Phi_k(x,y)$ the number of integers $n\leq x$ whose least prime factor $P^{-}(n) \ge y$ and where $\Omega(n)=k$. The following two results of Alladi provide estimates for $\Phi_k(x, y)$ in different ranges of $y$. 
\begin{theorem}[Alladi \cite{Alladi}, Theorem 7, see also \cite{Bal90}]  \label{thm:Alladi} %
Fix $r>0$ and set $u \coloneqq \frac{\log x}{\log y}$, $\xi \coloneqq \frac{k}{\log u -\gamma}$. Then uniformly for $\exp((\log_2 x)^3) \le y \le \sqrt{x}$ and $1\leq k \le r \log u$, we have\footnote{Note that while Theorem 7 of \cite{Alladi} is stated only for $k \leq (2-\epsilon)\log u$, the concluding remarks of that paper point out how that restriction can be weakened to the version given here.}
$$\Phi_k(x,y) = \frac{xe^{-\gamma\xi}}{\log x \  \Gamma(1+\xi)} \cdot \frac{(\log u)^{k-1}}{(k-1)!}\left(1+O_r\left(\frac{1}{\sqrt{\log u}}\right)\right).$$
  
\end{theorem}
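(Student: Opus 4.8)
\emph{Proof idea.} The plan is to work with the bivariate Dirichlet series that tracks $\Omega$ by a complex variable, estimate it via the Selberg--Delange method, and then recover $\Phi_k(x,y)$ by a Cauchy coefficient integral. Set $D_y(s,z) = \sum_{P^-(n)\ge y} z^{\Omega(n)}n^{-s} = \prod_{p\ge y}(1-zp^{-s})^{-1}$, absolutely convergent for $\Re s>1$, $|z|\le r+1$, and $y$ large, and factor $D_y(s,z) = \zeta(s)^z G_y(s,z)$ with
\[
G_y(s,z) = \prod_{p< y}\Bigl(1-\tfrac1{p^s}\Bigr)^{z}\ \prod_{p\ge y}\Bigl(1-\tfrac1{p^s}\Bigr)^{z}\Bigl(1-\tfrac{z}{p^s}\Bigr)^{-1}.
\]
The product over $p\ge y$ converges locally uniformly for $\Re s>\tfrac12$ and $z$ in a fixed disc (each factor is $1+O(|z|^2p^{-2\Re s})$), so $G_y(\,\cdot\,,z)$ continues holomorphically into the classical zero-free region of $\zeta$ just left of $\Re s=1$; and a Mertens-type computation — using $\sum_{p<y}\log(1-1/p)=-\gamma-\log_2 y+O(1/\log y)$ and bounding the contribution of $p\ge y$ by $O_r(1/(y\log y))$ — gives, uniformly for $|z|\le r+1$ and $y\ge 3$, $G_y(1,z) = e^{-\gamma z}(\log y)^{-z}\bigl(1+O_r(1/\log y)\bigr)$.

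Feeding this into the Selberg--Delange method (with a truncated zero-free region, as in \cite{tenenbaumAPNT}) yields, uniformly for $|z|\le r+1$ and $\exp((\log_2 x)^3)\le y\le\sqrt x$,
\[
\sum_{\substack{n\le x\\ P^-(n)\ge y}} z^{\Omega(n)}
= \frac{x}{\log x}\cdot\frac{e^{-\gamma z}u^{z}}{\Gamma(z)}\Bigl(1+O_r\bigl(\tfrac1{\log y}+\tfrac{|z|}{u}\bigr)\Bigr)
\;+\;O_r\!\bigl(xe^{-c\sqrt{\log x}}\bigr),
\]
the secondary terms being governed by powers of $(\log y)/(\log x)=1/u$ because the $s$-derivatives of $G_y$ at $s=1$ are of size $(\log y)^{O(1)}$. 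Since $\Phi_k(x,y)$ is the coefficient of $z^k$ in this (polynomial) generating function, I would extract it by $\Phi_k(x,y)=\tfrac1{2\pi i}\oint_{|z|=\xi}z^{-k-1}\sum_{n\le x,\,P^-(n)\ge y}z^{\Omega(n)}\,\mathrm{d}z$, choosing the radius at the saddle point $\xi=k/(\log u-\gamma)$ of $z\mapsto e^{-\gamma z}u^z z^{-k}$. The main term then equals $\tfrac{x}{\log x}\,[z^{k-1}]\,e^{z(\log u-\gamma)}/\Gamma(1+z)$, and writing $1/\Gamma(1+z)=\sum_{j\ge0}g_jz^j$ and $D=\log u-\gamma$,
\[
[z^{k-1}]\frac{e^{zD}}{\Gamma(1+z)} = \sum_{j=0}^{k-1} g_j\frac{D^{k-1-j}}{(k-1-j)!}
= \frac{D^{k-1}}{(k-1)!}\sum_{j\ge0} g_j\,\xi^{j}\prod_{i=1}^{j}\Bigl(1-\tfrac ik\Bigr);
\]
as $\sum_{j\ge0}g_j\xi^j=\Gamma(1+\xi)^{-1}$, $\bigl|\prod_{i\le j}(1-i/k)-1\bigr|\ll j^2/k$, and $\sum_{j\ge0}|g_j|\xi^j j^2\ll_r\xi$, the inner sum is $\Gamma(1+\xi)^{-1}\bigl(1+O_r(1/\log u)\bigr)$; and $D^{k-1}/(k-1)!=(\log u)^{k-1}e^{-\gamma\xi}(1+O_r(1/\log u))/(k-1)!$ since $(\log u)^{k-1}=(D+\gamma)^{k-1}=D^{k-1}\exp\!\bigl((k-1)\gamma/D+O(k/D^2)\bigr)$ and $(k-1)\gamma/D=\gamma\xi+O(1/\log u)$. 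After checking that the Selberg--Delange error contributes relative error $\ll_r\sqrt{\log u}/u$ (integrate $O(|z|/u)$ against $z^{-k-1}$ on $|z|=\xi$) and that the $xe^{-c\sqrt{\log x}}$ term is negligible in this range of $y$, one reaches the stated asymptotic, in fact with the sharper relative error $O_r(1/\log u)$.

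The heart of the matter is the uniform Selberg--Delange estimate: one needs a version of the method in which the arithmetic factor carries a parameter $y$ and its $s$-derivatives at $s=1$ grow like powers of $\log y$, so the implied constants must be tracked jointly in $z$ and $y$ — the same phenomenon appearing in Selberg--Delange treatments of Buchstab's $\Phi(x,y)$ and in Balazard's analysis \cite{Bal90} of $z^{\Omega}$-weighted counts. The rest is routine: the coefficient extraction is a clean gamma-function identity, and the regime $u=O_r(1)$ — where the claimed error $1/\sqrt{\log u}$ already exceeds $1$ — is disposed of by $\Phi_k(x,y)\le\#\{n\le x:P^-(n)\ge y\}\ll x/\log y\ll_r x/\log x$ against the matching lower bound for the main term. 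An alternative route, presumably closer to Alladi's original argument, avoids generating functions: peeling off the least prime factor gives the Buchstab-type identity $\Phi_k(x,y)=\sum_{y\le p\le x^{1/k}}\Phi_{k-1}(x/p,p)$, from which one argues by induction on $k$, turning the sum over $p$ into an integral via the prime number theorem and partial summation; there the error $O(1/\sqrt{\log u})$ is essentially self-reproducing under the induction — the integral concentrates where the inner ``$u$-parameter'' is $\approx u$ — which is why that exponent, rather than $1/\log u$, appears in the cited statement.
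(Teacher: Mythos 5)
You should note first that the paper contains no proof of this statement: Theorem \ref{thm:Alladi} is quoted from Alladi \cite{Alladi} (Theorem 7, with the wider range $k\le r\log u$ justified, per the footnote, by the concluding remarks of that paper, see also \cite{Bal90}), so there is no internal argument to compare yours against; what can be assessed is whether your sketch is a credible reconstruction of a proof of the cited result.

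On its own terms the outline is sound and follows the standard analytic route: factoring $\sum_{P^-(n)\ge y} z^{\Omega(n)}n^{-s}=\zeta(s)^zG_y(s,z)$, applying a Selberg--Delange estimate uniform in $y$ and bounded $z$, and extracting the $z^k$-coefficient by a contour at the saddle radius $\xi$ is exactly the mechanism used in the literature you cite (\cite{Bal90}, \cite{tenenbaumAPNT}). Your Mertens computation of $G_y(1,z)$, the identity $[z^{k-1}]e^{zD}/\Gamma(1+z)=\frac{D^{k-1}}{(k-1)!}\sum_{j}g_j\xi^j\prod_{i\le j}(1-i/k)$ with $D=\log u-\gamma$, and the passage from $D^{k-1}$ to $(\log u)^{k-1}e^{-\gamma\xi}$ are all correct, and your setup explains transparently why the restriction $k\le(2-\epsilon)\log u$ of the Sathe--Selberg setting disappears here: since $P^-(n)\ge y$, the nearest $z$-singularity of the Euler product sits near $|z|\approx y$, not at $z=2$, so the contour radius may be taken $\asymp_r1$. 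Two caveats. First, the entire weight of the argument rests on the uniform-in-$y$ Selberg--Delange estimate with relative error $O_r(1/\log y+|z|/u)$, which you assert by analogy rather than establish; tracking the $(\log y)^{O(1)}$ growth of the $s$-derivatives of $G_y$ at $s=1$ through the contour shift is precisely the nontrivial content (and is where the hypotheses $\exp((\log_2 x)^3)\le y\le\sqrt{x}$ are consumed), so as written this is a proof plan, not a proof. Second, the disc $|z|\le r+1$ is too small: $\xi=k/(\log u-\gamma)$ can be as large as roughly $6r$ when $u$ is close to $2$, so you need $|z|\le Cr$ for a suitable absolute constant $C$; relatedly, the expansion $(1+\gamma/D)^{k-1}=\exp((k-1)\gamma/D+O(k/D^2))$ needs $D$ bounded away from $0$, which again fails only in the regime $u\ll_r1$ that you correctly dispose of by the trivial bound $\Phi_k(x,y)\ll x/\log y$. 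With those repairs the sketch is consistent with how Alladi-type estimates are actually proved.
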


\begin{theorem}[Alladi \cite{Alladi}, Theorem 6] 
\label{thm:Alladi2} Fix $\epsilon \in (0, 2)$ and set $\mu \coloneqq \frac{k-1}{\log_2 x}$. Then uniformly for $3\le y \le \exp\left((\log x)^{2/5}\right)$ and $1\leq k \le (2-\epsilon)\log_2 x$, we have
$$\Phi_k(x,y) = \frac{x g(y,\mu)}{\log x \  \Gamma(1+\mu)} \cdot \frac{(\log_2 x)^{k-1}}{(k-1)!}\left(1+O\left(\frac{k(\log_2 y)^2}{ (\log_2 x)^2}\right)\right),$$
where \[g(y,\mu) \coloneqq \prod_{p<y} \left(1-\frac{1}{p}\right)^\mu \prod_{p\geq y}\left(1-\frac{1}{p}\right)^\mu\left(1-\frac{\mu}{p}\right)^{-1}.\]

\end{theorem}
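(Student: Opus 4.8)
I would prove Theorem~\ref{thm:Alladi2} by the Selberg--Delange method applied to the Dirichlet series marking $\Omega$, followed by a Cauchy/saddle-point extraction of the coefficient of $z^k$.

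For $|z|\le 2-\epsilon/2$ set $S(x,z)\coloneqq\sum_{n\le x,\,P^-(n)\ge y}z^{\Omega(n)}$, whose generating series is $F(s,z)=\prod_{p\ge y}(1-zp^{-s})^{-1}$ for $\Re s>1$. Write $F(s,z)=\zeta(s)^z\mathcal G(s,z)$ with
\[
\mathcal G(s,z)=\prod_{p<y}(1-p^{-s})^z\cdot\prod_{p\ge y}(1-p^{-s})^z(1-zp^{-s})^{-1}.
\]
Using $y\ge 3$ and $|z|\le 2-\epsilon/2$ one checks that $\mathcal G(\cdot,z)$ is holomorphic in a fixed half-plane $\Re s>\sigma_0$ with $\sigma_0<1$, and that $\mathcal G(1,z)=\prod_{p<y}(1-1/p)^z\prod_{p\ge y}(1-1/p)^z(1-z/p)^{-1}=g(y,z)$. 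The Selberg--Delange method then yields, uniformly in the relevant $z$ and $y$,
\[
S(x,z)=\frac{g(y,z)}{\Gamma(z)}\,x(\log x)^{z-1}\bigl(1+o(1)\bigr),
\]
and $\Phi_k(x,y)=\frac1{2\pi i}\oint_{|z|=r}S(x,z)\,z^{-k-1}\,dz$ with $r\asymp\mu$; the hypothesis $k\le(2-\epsilon)\log_2 x$ keeps $r$ bounded away from $2$, where the analysis would break down.

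The crux --- and the main obstacle --- is establishing the estimate for $S(x,z)$ \emph{uniformly in $y$}. In the textbook situation $\mathcal G(s,z)$ would be bounded on a fixed vertical line to the left of $\Re s=1$, but here the finite product $\prod_{p<y}(1-p^{-s})^z$ is as large as roughly $\exp(y^{\eta})$ on $\Re s=1-\eta$. I would therefore not push the Perron contour to a fixed line: when $y$ is large (say $y\ge(\log x)^C$) I would use a truncated Hankel-type contour around $s=1$ that dips only a distance $\asymp 1/\log y$ to its left and has height a small power of $y$, relying on the classical zero-free region for $\zeta$; on such a contour the small-prime product is only $(\log y)^{O(1)}$, the Perron and contour-shift errors are $\ll x\exp(-c(\log x)^{c'})$, and the main term comes from the keyhole in the usual way. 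When $y$ is small ($y<(\log x)^C$) the small-prime product is already tame on the classical contour and one argues as in the standard Selberg--Delange theorem. The upper bound $y\le\exp((\log x)^{2/5})$ is precisely what keeps all of these errors negligible against the main term.

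For the coefficient extraction, write $g(y,z)/\Gamma(z)=z\,g(y,z)/\Gamma(1+z)$, so that $\Phi_k(x,y)$ equals $\tfrac{x}{\log x}$ times the coefficient of $z^{k-1}$ in $A(z)e^{z\log_2 x}$, where $A(z)\coloneqq g(y,z)/\Gamma(1+z)$. The subtlety is that $A$ is not quite slowly varying on the scale $1/\sqrt k$ that governs the extraction: since $\tfrac{d}{dz}\log g(y,z)=\sum_{p<y}\log(1-1/p)+O(1/y)=-\log_2 y+O(1)$, the saddle point of $e^{z\log_2 x}z^{-k}A(z)$ lies at $z^\ast\asymp k/(\log_2 x-\log_2 y)$ rather than at $\mu=(k-1)/\log_2 x$. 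The saddle-point estimate therefore outputs the main term in a shape involving the true saddle, and rewriting it in the normalized form $g(y,\mu)(\log_2 x)^{k-1}/((k-1)!\,\Gamma(1+\mu))$ of the statement introduces a genuine multiplicative factor $\exp\bigl(-\Theta\bigl(k(\log_2 y)^2/(\log_2 x)^2\bigr)\bigr)=1+O\bigl(k(\log_2 y)^2/(\log_2 x)^2\bigr)$ --- exactly the error term claimed. (Heuristically this is the discrepancy between $(1-q)^k$ and $e^{-kq}$ with $q=\log_2 y/\log_2 x$, i.e.\ between the binomial and Poisson models for the number of prime factors below $y$ of an integer with $\Omega=k$.) All remaining errors --- the $o(1)$ above, the Perron/contour truncations, and the $O(1/\sqrt k)$ corrections in the saddle-point expansion --- are of strictly smaller order, completing the proof; the only genuinely delicate point throughout is the bookkeeping of the $y$-dependence.
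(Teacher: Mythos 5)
The paper does not actually prove this statement: Theorem \ref{thm:Alladi2} is imported verbatim from Alladi (Theorem 6 of \cite{Alladi}), so there is no internal argument to compare yours against. That said, your route --- writing $F(s,z)=\prod_{p\ge y}(1-zp^{-s})^{-1}=\zeta(s)^z\mathcal G(s,z)$, checking $\mathcal G(1,z)=g(y,z)$, getting a Selberg--Delange estimate for $S(x,z)$ uniformly in $y$ via a contour hugging $s=1$, and then extracting the coefficient of $z^{k-1}$ in $g(y,z)e^{z\log_2 x}/\Gamma(1+z)$ --- is the standard way such Sathe--Selberg-type results for $y$-rough integers are proved, and your heuristic for the size of the error (the discrepancy between $(1-q)^k$ and $e^{-kq}$ with $q=\log_2 y/\log_2 x$) correctly identifies it as a second-order effect of the drift $\frac{d}{dz}\log g(y,z)\approx-\log_2 y$.

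The genuine gap is in the last step, exactly where you locate the ``crux.'' A Laplace/saddle-point evaluation centered at the true saddle $z^\ast$ carries intrinsic relative corrections of size $1/\sqrt k$ (or $1/k$ after a careful second-order expansion), and your closing claim that these are ``of strictly smaller order'' than $k(\log_2 y)^2/(\log_2 x)^2$ is false in much of the stated range: for bounded $y$ and $k\asymp(\log_2 x)^{1/2}$ the claimed error is $\asymp(\log_2 x)^{-3/2}$ while $1/\sqrt k\asymp(\log_2 x)^{-1/4}$ and $1/k\asymp(\log_2 x)^{-1/2}$. As written, your argument therefore only delivers the weaker error $O\bigl(1/\sqrt k+k(\log_2 y)^2/(\log_2 x)^2\bigr)$, not the stated one. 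To get the stated error you must keep the exponential part of the extraction exact rather than Gaussian-approximate it: integrate over the circle $|z|=\mu$ with $\mu=(k-1)/\log_2 x$, expand $A(z)=g(y,z)/\Gamma(1+z)$ about $\mu$, and use the exact identity that the coefficient of $z^{k-1}$ in $(z-\mu)e^{z\log_2 x}$ vanishes precisely for this choice of $\mu$; the first-order term then cancels identically, and the remainder is controlled by the second derivative of $A$ (of relative size $\ll(\log_2 y)^2$) times the exact quantity $[z^{k-1}](z-\mu)^2e^{z\log_2 x}\ll k(\log_2 x)^{k-3}/(k-1)!$, which is where $O\bigl(k(\log_2 y)^2/(\log_2 x)^2\bigr)$ really comes from. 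Relatedly, the unquantified $(1+o(1))$ in your estimate for $S(x,z)$ must be made explicit and uniform on the whole circle (including near $z=0$, where $1/\Gamma(z)$ vanishes and relative errors degenerate) at a level below $k/(\log_2 x)^2$; this is routine but cannot be waved through. A minor misstatement: for $y\ge3$ the obstruction in the $z$-plane sits at the least prime $\ge y$, hence at $|z|\ge3$, so the hypothesis $k\le(2-\epsilon)\log_2 x$ is not explained by the analysis ``breaking down at $2$'' in this rough-number setting.
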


The truncated sums of the exponential series (and twists thereof) will play a starring role in our arguments. The following technical result provides the groundwork for estimating such sums. 
\begin{theorem}[Norton \cite{Norton}, Lemmas 4.5 and 4.7] \label{thm:Norton}
Let $v,\theta$ be positive real numbers. Then
\begin{equation} \label{eq:Norton45}
\sum_{k \le (1-\theta)v} \frac{v^k}{k!} < \frac{1}{\theta \sqrt{ v(1-\theta)}} \exp\left((R(-\theta)+1) v\right)\end{equation} 
and \begin{equation} \label{eq:Norton47}
 \sum_{k \ge (1+\theta)v} \frac{v^k}{k!} < \frac{\sqrt{1+\theta}}{\theta \sqrt{2\pi v}} \exp\left((R(\theta)+1) v\right)
\end{equation} 
where $R(\theta) = \theta-(\theta+1)\log(\theta+1)$.
\end{theorem}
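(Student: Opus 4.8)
The plan is to treat both inequalities as sharp Chernoff-type bounds for the tails of the Poisson distribution: since $e^v=\sum_{k\ge 0}v^k/k!$, each of the two sums is $e^v$ times a Poisson tail probability, and the exponents $(R(\pm\theta)+1)v$ appearing on the right are precisely those predicted by the Cram\'er rate function $-R$. Both parts would be proved by the same mechanism: in the relevant truncated range the terms $v^k/k!$ are dominated by a geometric series, so the whole sum is at most a constant multiple of its extreme term, which is then estimated by Stirling's inequality $n!\ge\sqrt{2\pi n}\,(n/e)^n$ and optimized using the monotonicity of $m\mapsto(ev/m)^m$ (its logarithmic derivative is $\log(v/m)$, so this map increases on $(0,v]$ and decreases on $[v,\infty)$).

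For \eqref{eq:Norton47} I would set $m_1=\lceil(1+\theta)v\rceil$, so that $m_1\ge(1+\theta)v$. For $k\ge m_1$ the ratio of consecutive terms satisfies $\frac{v^{k+1}/(k+1)!}{v^k/k!}=\frac{v}{k+1}<\frac{1}{1+\theta}$, whence
\[
\sum_{k\ge m_1}\frac{v^k}{k!}\le\frac{v^{m_1}}{m_1!}\sum_{i\ge 0}\frac{1}{(1+\theta)^i}=\frac{1+\theta}{\theta}\cdot\frac{v^{m_1}}{m_1!}.
\]
Next, Stirling gives $\frac{v^{m_1}}{m_1!}\le\frac{1}{\sqrt{2\pi m_1}}(ev/m_1)^{m_1}$, and since $m_1\ge(1+\theta)v\ge v$ the decreasing branch of the monotonicity yields $(ev/m_1)^{m_1}\le\left(\frac{e}{1+\theta}\right)^{(1+\theta)v}=\exp((R(\theta)+1)v)$, while also $\frac{1}{\sqrt{2\pi m_1}}\le\frac{1}{\sqrt{2\pi(1+\theta)v}}$. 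Combining the three factors collapses exactly to $\frac{\sqrt{1+\theta}}{\theta\sqrt{2\pi v}}\exp((R(\theta)+1)v)$.

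For \eqref{eq:Norton45} (where one may assume $\theta\in(0,1)$, else the sum is empty or a single term) I would set $m_0=\lfloor(1-\theta)v\rfloor$. When $m_0\ge 1$, for $1\le k\le m_0$ the ratio $\frac{v^{k-1}/(k-1)!}{v^k/k!}=\frac{k}{v}\le 1-\theta$, so summing the geometric series downward from $k=m_0$,
\[
\sum_{k\le m_0}\frac{v^k}{k!}\le\frac{v^{m_0}}{m_0!}\sum_{i\ge 0}(1-\theta)^i=\frac{1}{\theta}\cdot\frac{v^{m_0}}{m_0!}.
\]
Stirling again gives $\frac{v^{m_0}}{m_0!}\le\frac{1}{\sqrt{2\pi m_0}}(ev/m_0)^{m_0}$; now $m_0\le(1-\theta)v\le v$, so the increasing branch yields $(ev/m_0)^{m_0}\le\left(\frac{e}{1-\theta}\right)^{(1-\theta)v}=\exp((R(-\theta)+1)v)$, and $\sqrt{2\pi m_0}\ge\sqrt{(1-\theta)v}$ because $2\pi\lfloor(1-\theta)v\rfloor\ge(1-\theta)v$ once $(1-\theta)v\ge 1$. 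This produces the claimed bound. The leftover range $(1-\theta)v<1$, where the sum equals its $k=0$ term $=1$, is checked directly: $\exp((R(-\theta)+1)v)\ge e^{(1-\theta)v}>1>\theta\sqrt{(1-\theta)v}$, so the right side exceeds $1$.

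I expect the only real difficulty to be the bookkeeping needed to land Norton's exact constants (the precise $\sqrt{2\pi}$ and $\sqrt{1\pm\theta}$, with no stray numerical factor) rather than mere $O$-estimates. This forces one to use Stirling in exactly the one-sided Robbins form $n!\ge\sqrt{2\pi n}\,(n/e)^n$, valid for every integer $n\ge 1$; to handle the floor/ceiling carefully, since the extreme term lies at an integer near but not equal to $(1\pm\theta)v$ and one must check that the monotonicity of $(ev/m)^m$ is exploited in the favorable direction; and to dispose separately of the degenerate small-$v$ range of \eqref{eq:Norton45}. Strictness of the inequalities is automatic whenever the sums are nonempty, since both the geometric comparison and Robbins' bound are strict there.
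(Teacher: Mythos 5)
Your proof is correct, and there is nothing in the paper to compare it against: Theorem \ref{thm:Norton} is quoted from Norton (Lemmas 4.5 and 4.7) without proof, so your argument serves as an independent, self-contained verification rather than an alternative to an in-paper proof. The key steps all check out. The exponent identifications $\left(\frac{e}{1\pm\theta}\right)^{(1\pm\theta)v}=\exp\left((R(\pm\theta)+1)v\right)$ are exactly right; the geometric dominations give the factors $\frac{1+\theta}{\theta}$ (from ratio $\frac{v}{k+1}<\frac1{1+\theta}$ for $k\ge m_1=\lceil(1+\theta)v\rceil$) and $\frac1\theta$ (from ratio $\frac kv\le 1-\theta$ for $k\le m_0=\lfloor(1-\theta)v\rfloor$); Robbins' one-sided bound $n!>\sqrt{2\pi n}\,(n/e)^n$ applies since $m_1\ge 1$ always and $m_0\ge1$ in the nondegenerate case; and the monotonicity of $m\mapsto(ev/m)^m$ is used on the correct side of $v$ in each part, since $m_1\ge(1+\theta)v\ge v$ and $m_0\le(1-\theta)v\le v$. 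The bookkeeping that lands Norton's exact constants is also right: $\frac{1+\theta}{\sqrt{2\pi(1+\theta)v}}=\frac{\sqrt{1+\theta}}{\sqrt{2\pi v}}$ for \eqref{eq:Norton47}, and $2\pi m_0\ge 2m_0\ge m_0+1>(1-\theta)v$ (valid once $m_0\ge1$) for \eqref{eq:Norton45}, with the range $(1-\theta)v<1$ disposed of directly as you do. Strictness is indeed automatic from the strict geometric tail (or strict Robbins bound). The only caveat worth recording is the one you already note: \eqref{eq:Norton45} is only meaningful for $\theta\in(0,1)$, since for $\theta\ge1$ the right-hand side is undefined or infinite; this is a restriction on the statement as quoted, not a gap in your argument, and it is harmless for the paper's applications (Lemma \ref{lem:NortonAppGen} uses $\theta=E/W=o(1)$, and the use in \eqref{eq:Np2LargeBeta} only invokes \eqref{eq:Norton47}).
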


We will be making more frequent use of the following consequence of Norton's estimates.
\begin{lemma}\label{lem:NortonAppGen} Suppose $W>0$ is sufficiently large and that $E = o(W^{2/3})$ as $W \rightarrow \infty$. Then
$$\sum_{\substack{0 \leq k \le W-E}} \frac{W^k}{k!} \ll \frac{W^{1/2}}{E} \exp\left(W-\frac{E^2}{2W} \right) \ \ \ \ \text{and} 
\ \ \sum_{\substack{k \ge W+E}} \frac{W^k}{k!} \ll \frac{W^{1/2}}{E} \exp\left(W-\frac{E^2}{2W} \right).$$
\end{lemma}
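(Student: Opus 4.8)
The plan is to invoke Norton's estimates (Theorem~\ref{thm:Norton}) with $v = W$ and $\theta = E/W$. Since $E = o(W^{2/3})$ we have $\theta \to 0$ as $W \to \infty$, so in particular $\theta \in (0,1)$ once $W$ is large and both parts of Theorem~\ref{thm:Norton} apply. For the first sum, $(1-\theta)v = W - E$, so \eqref{eq:Norton45} gives
\[
\sum_{0 \le k \le W-E} \frac{W^k}{k!} < \frac{1}{\theta\sqrt{W(1-\theta)}}\exp\bigl((R(-\theta)+1)W\bigr);
\]
for the second sum, $(1+\theta)v = W+E$, so \eqref{eq:Norton47} gives
\[
\sum_{k \ge W+E} \frac{W^k}{k!} < \frac{\sqrt{1+\theta}}{\theta\sqrt{2\pi W}}\exp\bigl((R(\theta)+1)W\bigr).
\]

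Next I would control the two ingredients appearing in each bound. Both prefactors are $\ll \sqrt{W}/E$: we have $\frac{1}{\theta\sqrt{W(1-\theta)}} = \frac{\sqrt W}{E\sqrt{1-\theta}}$ and $\frac{\sqrt{1+\theta}}{\theta\sqrt{2\pi W}} = \frac{\sqrt{W}\sqrt{1+\theta}}{E\sqrt{2\pi}}$, and $1-\theta$, $1+\theta$ are bounded away from $0$ and $\infty$ as $\theta \to 0$. For the exponents, from $R(u) = u - (u+1)\log(u+1)$ one computes $R'(u) = -\log(1+u)$, $R''(u) = -(1+u)^{-1}$, $R'''(u) = (1+u)^{-2}$, whence Taylor's theorem gives $R(u) = -\tfrac{u^2}{2} + O(u^3)$ uniformly for $|u| \le \tfrac12$. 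Applying this with $u = -\theta$ and $u = \theta$ yields, in both cases,
\[
(R(\pm\theta)+1)W = W - \frac{\theta^2 W}{2} + O(\theta^3 W) = W - \frac{E^2}{2W} + O\!\left(\frac{E^3}{W^2}\right).
\]

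Finally, the hypothesis $E = o(W^{2/3})$ is used precisely to conclude that $E^3/W^2 = o(1)$, so that $\exp((R(\pm\theta)+1)W) = (1+o(1))\exp\bigl(W - E^2/(2W)\bigr) \ll \exp\bigl(W - E^2/(2W)\bigr)$. Multiplying by the $\ll \sqrt W/E$ bound on the prefactors gives both displayed inequalities. The argument is essentially routine; the only subtlety, and the one place the hypothesis is genuinely needed, is ensuring that the cubic term in the exponent is negligible — if one only assumed $E = O(W^{2/3})$ one would be left with an unwanted constant factor $e^{O(1)}$ in the exponential, and for $E$ growing faster than $W^{2/3}$ the method breaks down entirely.
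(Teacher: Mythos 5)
Your proof is correct and follows essentially the same route as the paper: apply Norton's bounds (Theorem \ref{thm:Norton}) with $\theta = E/W$, Taylor-expand $R(\pm\theta)+1 = 1 - \tfrac{\theta^2}{2} + O(\theta^3)$, and use $E = o(W^{2/3})$ to make the cubic term $O(E^3/W^2)$ negligible, with the prefactors giving the $\sqrt{W}/E$ factor. (The only quibble is your closing aside: under $E = O(W^{2/3})$ the resulting $e^{O(1)}$ factor would in fact still be harmless for a $\ll$ bound, though the hypothesis as stated is what the paper uses.)
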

\begin{proof}
For any parameter $\theta = o(W^{-1/3})$, we have 
\begin{equation*}
\begin{split}
R(-\theta)+1 &= (1-\theta)(1 - \log(1-\theta)) = (1-\theta)\left(1 + \theta + \frac{\theta^2}2 + O(\theta^3)\right) = 1 - \frac{\theta^2}2 + O(\theta^3).
\end{split}
\end{equation*}
Consequently taking $\theta = E/W = o(W^{-1/3})$, we deduce from \eqref{eq:Norton45} and the above estimate that
$$\sum_{\substack{0 \leq k \le W-E}} \frac{W^k}{k!} = \sum_{k \le (1-\theta)W} \frac{W^k}{k!} \ll \frac{e^W}{\theta \sqrt W} \exp\left(-\frac12 \theta^2 W\right) \ll \frac{W^{1/2}}{E} \exp\left(W-\frac{E^2}{2W} \right),$$
establishing the first of the claimed estimates. The proof of the second is analogous. \qedhere
\end{proof}

We conclude this section with an estimate on certain `twisted' versions of the truncated sums of the exponential series, which arise in the arguments of Theorem \ref{thm:AlphaModified}.
\begin{lemma} \label{lem:beattyModified} 
Fix $\epsilon, \alpha \in (0, 1)$ with $\alpha$ rational. Then we have, uniformly in $\beta \in [\betalpPl, 1-\epsilon]$ and in $V, E \rightarrow \infty$ satisfying %
$V^{1/2} \ll E$ and $E=o(V^{2/3})$, 
\begin{equation}\label{eq:LargeBetaMTRat}
\sum_{V-E \le k \le V+E} \frac{V^k}{k!} \chiexpfr = \rho_{\chi, \alpha} e^V \left\{1+O\left(e^{-c_\alpha V} + \frac{V^{1/2}}E \exp\left(-\frac{E^2}{2V}\right)\right)\right\}
\end{equation}
where $c_\alpha>0$ is a constant depending only on $\alpha$. With the same restrictions on $\alpha, V$ and $E$, 
\begin{equation}\label{eq:SmallBetaMTRat}
\sum_{\frac{V-E}{1-\alpha} \le k \le \frac{V+E}{1-\alpha}}  \frac{V^{\lfloor{(1-\alpha)k\rfloor}}}{\lfloor{(1-\alpha)k\rfloor}!} \nuexpfr = \frac{\rho_{\nu, \alpha} e^V}{1-\alpha} \left\{1+O\left(\sqrt{\frac EV} + \frac{V^{1/2}}E \exp\left(-\frac{E^2}{2V}\right)\right)\right\}.
\end{equation}
The implied constants in the two formulae above depend at most on the distance of $\alpha$ from $0$ and $1$, and the size of the denominator of $\alpha$. 

The above asymptotic formulae also hold true (without any effective error term) %
for a fixed irrational $\alpha \in (0, 1)$, and with a multiplicative error of
\begin{equation}\label{eq:IrrAlphaErr}
1+O\left(\frac{E^{3/2}}V + \frac{ \log V (\log_2 V)^{1+\epsilon}}{E^{1/2}} + \frac{V^{1/2}}E \exp\left(-\frac{E^2}{2V}\right)\right) \text{  for almost all }\alpha \in (0, 1). 
\end{equation}
Finally, given $\epsilon, V, E$ as above and a parameter $\err = o(1)$, we have 
\begin{equation}\label{eq:beattynearbeta}
\sum_{V-E \le k \le V+E} \frac{V^k}{k!} \chiexpfr = e^V \left\{1+O\left(\err + \frac{V^{1/2}}E \exp\left(-\frac{E^2}{2V}\right)\right)\right\},    
\end{equation}
uniformly in $\beta \in (\epsilon, 1-\epsilon)$ and in $\alpha \in (\beta-\err, \beta+\err)$.
\end{lemma}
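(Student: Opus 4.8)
The plan is to handle \eqref{eq:LargeBetaMTRat}, \eqref{eq:SmallBetaMTRat}, their irrational-$\alpha$ analogues, and \eqref{eq:beattynearbeta} through one common mechanism. The weights $V^k/k!$ form (after normalization) a Poisson law of mean and variance $V$, concentrated in a window of width $\asymp\sqrt{V}$ about $k=V$; since $V^{1/2}\ll E=o(V^{2/3})$, Lemma \ref{lem:NortonAppGen} (applied to both tails, with $W=V$) gives
\[
\sum_{V-E\le k\le V+E}\frac{V^k}{k!}=e^V\left(1+O\left(\frac{V^{1/2}}{E}\exp\left(-\frac{E^2}{2V}\right)\right)\right).
\]
The twisting factors $\chi^{\{(1-\alpha)k\}}$, resp.\ $\nu^{\{(1-\alpha)k\}}$, lie in a fixed compact subset of $(0,\infty)$, uniformly for $\beta$ bounded away from $0$ and $1$ (here $\beta\le 1-\epsilon$ keeps $\chi$ bounded); so, averaged against these slowly varying weights, they should contribute their long-run mean, which is $\rho_{\chi,\alpha}$ in \eqref{eq:LargeBetaMTRat} and $\rho_{\nu,\alpha}/(1-\alpha)$ in \eqref{eq:SmallBetaMTRat} (the extra $1/(1-\alpha)$ recording the number of $k$ with a given value of $\lfloor(1-\alpha)k\rfloor$). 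Thus in every case the task is to make precise this averaging, and to control the residual "non-mean'' part.

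For rational $\alpha$, write $1-\alpha=a/b$ with $\gcd(a,b)=1$. For \eqref{eq:LargeBetaMTRat} I would first evaluate the \emph{untruncated} sum exactly by a roots-of-unity filter: since $\{(1-\alpha)k\}$ depends only on $k\bmod b$, grouping $k$ by residue mod $b$ and using $\sum_{q\ge 0}\frac{V^{bq+r}}{(bq+r)!}=\frac1b\sum_{j=0}^{b-1}\zeta^{-jr}e^{\zeta^j V}$ with $\zeta=e^{2\pi i/b}$, the $j=0$ term yields $\bigl(\frac1b\sum_{i=0}^{b-1}\chi^{i/b}\bigr)e^V=\rho_{\chi,\alpha}e^V$, and the terms $j\ne 0$ total $O(e^{V\cos(2\pi/b)})=O(e^{(1-c_\alpha)V})$ with $c_\alpha=1-\cos(2\pi/b)>0$ (this is where the dependence of the implied constants on the denominator of $\alpha$ enters). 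Subtracting the two tails $\sum_{k<V-E}$ and $\sum_{k>V+E}$, each $O(\tfrac{V^{1/2}}{E}e^{V-E^2/(2V)})$ by Lemma \ref{lem:NortonAppGen}, and dividing by $\rho_{\chi,\alpha}e^V>0$ gives \eqref{eq:LargeBetaMTRat}. For \eqref{eq:SmallBetaMTRat} I would group the summand by the value $m=\lfloor(1-\alpha)k\rfloor$: the weight $V^m/m!$ is then constant on each group, so the sum becomes $\sum_m\frac{V^m}{m!}\sigma_m$ with $\sigma_m=\sum_{k:\lfloor(1-\alpha)k\rfloor=m}\nu^{\{(1-\alpha)k\}}$, and one checks $\sigma_m$ is periodic in $m$ with period $a$ and per-period mean $\rho_{\nu,\alpha}/(1-\alpha)$; the same roots-of-unity/tail-truncation argument, now modulo $a$, then applies. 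The coarser step structure of $V^{\lfloor(1-\alpha)k\rfloor}/\lfloor(1-\alpha)k\rfloor!$ together with the incomplete groups at the two ends of the window is what I expect to generate the weaker $\sqrt{E/V}$ error (it appears on optimizing a block length $\asymp\sqrt{V/E}$ in an averaged variant of this argument).

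For a fixed irrational $\alpha$ the exact identity is unavailable, so I would instead use Weyl equidistribution of $\{(1-\alpha)n\}_{n\ge 1}$: split the window into blocks of length $\ell$, approximate $V^k/k!$ by its value at the block centre (relative error $O(\ell E/V)$ across a block, since consecutive ratios $V/(k+1)=1+O(E/V)$), and apply Koksma's inequality on each block to replace $\chi^{\{(1-\alpha)k\}}$ by $\int_0^1\chi^t\,dt=\tfrac{\chi-1}{\log\chi}=\rho_{\chi,\alpha}$ up to $O(D_\ell\cdot\mathrm{Var}(\chi^t))$, where $D_\ell$ is the discrepancy of $\{(1-\alpha)n\}_{n=1}^\ell$ (translation invariance lets one use the same $D_\ell$ for every block). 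Summing over the $\asymp E/\ell$ blocks, re-telescoping the weight sums, and truncating tails yields a relative error $O\bigl(D_\ell+\tfrac{\ell E}{V}+\tfrac{V^{1/2}}{E}e^{-E^2/(2V)}\bigr)$; taking $\ell=\lceil\sqrt{E}\rceil$ and, for almost all $\alpha$, inserting the metric discrepancy bound $D_N\ll (\log N)(\log_2 N)^{1+\epsilon}/N$ with $N\asymp V$ produces precisely \eqref{eq:IrrAlphaErr}, while for a single fixed irrational $\alpha$ one has only $D_\ell=o(1)$ at an ineffective rate (governed by the continued fraction of $\alpha$), which is why that version carries no effective error term. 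The floor-version \eqref{eq:SmallBetaMTRat} for irrational $\alpha$ is handled the same way after grouping by $m=\lfloor(1-\alpha)k\rfloor$, additionally using that $\#\{k:\lfloor(1-\alpha)k\rfloor=m\}$ equals $\frac1{1-\alpha}$ on average; an Abel-summation computation against $\lfloor m/(1-\alpha)\rfloor=\tfrac{m}{1-\alpha}+O(1)$ shows the deviation contributes only an $O(V^{-1/2})$ relative error, absorbed above.

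Finally, \eqref{eq:beattynearbeta} needs none of this: for $\alpha\in(\beta-\err,\beta+\err)$ with $\beta\in(\epsilon,1-\epsilon)$ one has $\chi=\frac{(1-\alpha)\beta}{(1-\beta)\alpha}=1+O(\err)$ uniformly, hence $\chi^{\{(1-\alpha)k\}}=1+O(\err)$ for every $k$; pulling this factor out of the sum and applying the Poisson-truncation estimate of the first paragraph gives $e^V(1+O(\err+\tfrac{V^{1/2}}{E}e^{-E^2/(2V)}))$ directly (with main coefficient $1$, since we bypass $\rho_{\chi,\alpha}$ entirely), and this argument is insensitive to the rationality of $\alpha$. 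I expect the main obstacle to be the irrational case: organizing the block decomposition so that the discrepancy error, the weight-variation error, and the tail error all fall within the exact shape of \eqref{eq:IrrAlphaErr}, and in particular choosing the block length so as to balance the first two while, in the almost-all statement, confining the dependence on $\alpha$ to the implied constant in the discrepancy bound.
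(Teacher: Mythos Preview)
Your proposal is correct and follows essentially the same route as the paper: roots-of-unity filter for \eqref{eq:LargeBetaMTRat} with rational $\alpha$ (the paper phrases this via orthogonality of additive characters, obtaining the identical constant $c_\alpha=1-\cos(2\pi/b)$), block decomposition of length $L=E^{1/2}$ with Koksma's inequality and Khintchine's discrepancy bound for the irrational and almost-all assertions, and the direct estimate $\chi=1+O(\err)$ for \eqref{eq:beattynearbeta}.

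The one genuine difference is in the rational case of \eqref{eq:SmallBetaMTRat}. The paper does \emph{not} use your roots-of-unity idea there; instead it groups by $m=\lfloor(1-\alpha)k\rfloor$, handles the two boundary values of $m$ by a direct Stirling estimate, and then partitions $[V-E,V+E]$ into blocks of length $L=\sqrt{V/E}$, on each of which $V^m/m!$ is constant up to $O(EL/V)=O(\sqrt{E/V})$; this block variation is the sole source of the $\sqrt{E/V}$ error. Your alternative --- observing that $\sigma_m$ is periodic in $m$ with period $a$ and applying the roots-of-unity filter modulo $a$ to $\sum_m \frac{V^m}{m!}\sigma_m$ --- is in fact sharper: it would replace $\sqrt{E/V}$ by $e^{-c'_\alpha V}$ with $c'_\alpha=1-\cos(2\pi/a)$ (or by $0$ if $a=1$), plus the same tail-truncation term. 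So your hedging between the two mechanisms is unnecessary; the roots-of-unity approach already does the job, and the stated $\sqrt{E/V}$ is simply what the paper's coarser block method produces. One small slip: in the irrational case your discrepancy is over blocks of length $N\asymp\ell=E^{1/2}$, not $N\asymp V$; this is harmless since $\log E^{1/2}\asymp\log V$ in the regime $V^{1/2}\ll E\ll V^{2/3}$.
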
 

\begin{proof}
We first consider the case when $\alpha \in (0, 1)$ is rational. We start by writing $1-\alpha = a/b$ for some coprime positive integers $a, b$, so that $b>1$. Then the sum on the left hand side of \eqref{eq:LargeBetaMTRat} is equal to 
\begin{equation}\label{eq:ResClassSplit}
\sum_{V-E \le k \le V+E} \frac{V^k}{k!} \chi^{\{ak/b\}} = \sum_{r \bmod b} \chi^{\{ar/b\}} \sum_{\substack{V-E \le k \le V+E\\k \equiv r \pmod b}} \frac{V^k}{k!}.    
\end{equation}
By the orthogonality of additive characters, the inner sum on $k$ is (writing $e(x) \coloneqq e^{2\pi i x}$) 
\begin{equation*}
\begin{split}
\frac1b \sum_{\ell \bmod b} & e\left(-\frac{r\ell}b\right) \sum_{V-E \le k \le V+E} \frac{(Ve^{2\pi i \ell/b})^k}{k!} \\&= \frac1b \sum_{\ell \bmod b} e\left(-\frac{r\ell}b\right) \exp(Ve^{2\pi i \ell/b}) + O\left(e^V\frac{V^{1/2}}E \exp\left(-\frac{E^2}{2V}\right)\right). 
\end{split}
\end{equation*}
Plugging this back into \eqref{eq:ResClassSplit} and interchanging sums yields
\begin{equation*}
\begin{split}
&\sum_{V-E \le k \le V+E} \frac{V^k}{k!} \chiexpfr \\&= \frac{e^V}b \sum_{r \bmod b} \chi^{\{ar/b\}} + \frac1b \sum_{\ell=1}^{b-1} \exp(Ve^{2\pi i \ell/b}) \sum_{r \bmod b} \chi^{\{ar/b\}} e\left(-\frac{r\ell}b\right) + O\left(e^V\frac{V^{1/2}}E \exp\left(-\frac{E^2}{2V}\right)\right)\\
&= \rho_{\chi, \alpha} e^V + \frac1b \sum_{\ell=1}^{b-1} \exp(Ve^{2\pi i \ell/b}) \sum_{j=0}^{b-1} \chi^{j/b} e\left(-\frac{\overline a j \ell}b\right) + O\left(e^V\frac{V^{1/2}}E \exp\left(-\frac{E^2}{2V}\right)\right)
\end{split}
\end{equation*}
where $\overline a \in \mathbb Z$ denotes a multiplicative inverse of $a$ mod $b$, and we have noted that as $r$ runs over the different residues mod $b$, so does $ar$. The inner sum in the last display is $O(1)$, %
so that the sum over $\ell$ above is $\ll e^{(1-c_\alpha)V}$ with $c_\alpha \coloneqq 1-\cos(2\pi/b)>0$. This completes the proof of \eqref{eq:LargeBetaMTRat}.

In order to show \eqref{eq:SmallBetaMTRat}, we start by setting $m = \lfloor (1-\alpha) k \rfloor$, so that $m \in (V-E-1, V+E]$ and $m/(1-\alpha) \le k < (m+1)/(1-\alpha)$. This last condition automatically implies that $k \in [(V-E)/(1-\alpha), (V+E)/(1-\alpha)]$ for all $m \in [V-E, V+E-1]$. The remaining $m$ are of the form $V+\theta E + O(1)$ for some $\theta \in \{\pm 1\}$, so that for such $m$,
$$h(m) \coloneqq m\log V - m\log m + m -\frac12 \log m = V - \frac12 \log V - \frac{E^2}{2V} + O(1),$$
and by Stirling's formula, 
\begin{equation}\label{eq:Vmm!Extrm}
\frac{V^m}{m!} = \exp\left(h(m)-\frac12 \log(2\pi)\right) \left(1+O\left(\frac1m\right)\right) \ll \frac{e^V}{V^{1/2}} \exp\left(-\frac{E^2}{2V}\right),
\end{equation}
which is negligible compared to the error term in \eqref{eq:SmallBetaMTRat}. Hence, up to a negligible error, the sum in \eqref{eq:SmallBetaMTRat} is equal to
\begin{equation}\label{eq:SmallBetaSumAliter}
\sum_{V-E \le m \le V+E} \frac{V^m}{m!} \sum_{\frac m{1-\alpha} \le k < \frac {m+1}{1-\alpha}} \nuexpfr    .
\end{equation}
Now, for any positive $A<B$, we see that 
\begin{equation}\label{eq:FracIntervRat}
\sum_{A \le k < B} \nuexpfr = \sum_{r \bmod b} \nu^{\{ar/b\}} \sum_{\substack{A \le k < B\\k \equiv r \pmod b}} 1 = (B-A) \rho_{\nu, \alpha} + O(1).    
\end{equation}
Defining $L \coloneqq \sqrt{V/E}$, we partition the interval $[V-E, V+E]$ into $\lfloor 2E/L \rfloor$ equal length subintervals $I$, so that each subinterval has length $2E (2E/L + O(1))^{-1} = L + O(V/E^2) = L + O(1)$. %
For any subinterval $I$ of length $L+O(1)$ and any two integers $m_1, m_2 \in I$, Lagrange's Mean Value Theorem implies that there exists $m' \in [m_1, m_2]$ satisfying 
$$h(m_1) - h(m_2) = (m_1 - m_2) \frac{\partial h}{\partial m}\Big\vert_{m=m'} \ll L\left\{-\log\left(1-\frac EV\right) + \frac1V\right\} \ll \frac{EL}V,$$
whereby another application of Stirling's formula reveals that
\begin{equation}\label{eq:gm_iRelation}
\frac{V^{m_1}}{m_1!} = \frac{V^{m_2}}{m_2!}\left(1+O\left(\frac{EL}V\right)\right) =  \frac{V^{m_2}}{m_2!}\left(1+O\left(\sqrt{\frac EV}\right)\right).
\end{equation}
As such, fixing some integer $m_I \in I$ and letting $L_I$ and $R_I$ denote the least and largest integers in $I$ respectively, we see that the contribution of all $m \in I$ to the sum \eqref{eq:SmallBetaSumAliter} is
\begin{equation*}
\begin{split}
\sum_{m \in I} \frac{V^m}{m!} \sum_{\frac m{1-\alpha} \le k < \frac {m+1}{1-\alpha}} \nuexpfr &= \frac{V^{m_I}}{m_I!} \left(1+O\left(\sqrt{\frac EV}\right)\right) \sum_{\frac{L_I}{1-\alpha} \le k < \frac {R_I+1}{1-\alpha}} \nuexpfr\\
&\stackrel{\eqref{eq:FracIntervRat}}= \frac{\rho_{\nu, \alpha}}{1-\alpha} \left(1+O\left(\sqrt{\frac EV}\right)\right) L\frac{V^{m_I}}{m_I!}\\
&\stackrel{\eqref{eq:gm_iRelation}}= \frac{\rho_{\nu, \alpha}}{1-\alpha} \left(1+O\left(\sqrt{\frac EV}\right)\right) \sum_{m \in I} \frac{V^m}{m!}.
\end{split}
\end{equation*}
Summing this over all $I$, and using Lemma \ref{lem:NortonAppGen} to extend the sum on $m$, we obtain \eqref{eq:SmallBetaMTRat}. 

Now, given a parameter $\err=o(1)$, we see that $\chi = (1-\alpha)\beta/(1-\beta)\alpha = 1 + O(\err)$ uniformly for $\beta \in (\epsilon, 1-\epsilon)$ and $\alpha \in (\beta-\err, \beta+\err)$, so that $\chiexpfr = \exp(\{(1-\alpha)k\} \log \chi) = 1+O(\err)$, and Lemma \ref{lem:NortonAppGen} completes the proof of \eqref{eq:beattynearbeta}.

Finally, in order to establish the assertions corresponding to both \eqref{eq:LargeBetaMTRat} and \eqref{eq:SmallBetaMTRat} for irrational $\alpha \in (\epsilon, 1-\epsilon)$, we carry out the above argument with $L \coloneqq E^{1/2}$. By \eqref{eq:gm_iRelation}, the sums in \eqref{eq:LargeBetaMTRat} and \eqref{eq:SmallBetaSumAliter} are respectively equal to 
\begin{equation*}
\sum_I \frac{V^{k_I}}{k_I!} \!\left(1+O\!\left(\frac{E^{3/2}}V\right)\!\right) \sum_{k \in I} \chiexpfr \text{ \ and \ } \sum_I \frac{V^{m_I}}{m_I!} \!\left(1+O\!\left(\frac{E^{3/2}}V\right)\!\right) \!\sum_{\frac{L_I}{1-\alpha} \le k < \frac {R_I+1}{1-\alpha}} \nuexpfr,
\end{equation*}
where (as before) the outer sums are over the $\lfloor 2E/L \rfloor$ equal length subintervals $I$ partitioning $[V-E, V+E]$, and $k_I$ and $m_I$ are some integers chosen from $I$.

As such, it only remains to estimate the sums $\sum_{k \in J} c^{\{(1-\alpha)k\}}$ uniformly over intervals $J \subset [V-E, V+E]$ with length $|J| \rightarrow \infty$, where $c \in \{\chi, \nu\}$. Since the sequence $\{(1-\alpha)k\}_{k=1}^\infty$ is uniformly distributed mod $1$, this sum is $\sim \rho_{c, \alpha} |J|$. In fact, by Koksma's inequality (Theorem 5.4 in \cite{harman}), we see that 
$$\left|\frac1{|J|+O(1)} \sum_{k \in J} c^{\{(1-\alpha)k\}} - \int_0^1 c^t \, \mathrm dt\right| \le \text{Var}(t \mapsto c^t) \cdot \text{Disc}(\{(1-\alpha)k : k \in J\}),$$
where $\text{Var}(t \mapsto c^t)$ denotes the total variation of the function $t \mapsto c^t$ on $[0, 1)$ and $\text{Disc}(\{(1-\alpha)k : k \in J\})$ denotes the discrepancy of the sequence $\{(1-\alpha)k : k \in J\}$. By Khintchine's bound (Theorem 5.15 in \cite{harman}), the above discrepancy is $\ll \log L (\log_2 L)^{1+\epsilon}/L$ for almost all $\alpha \in (0, 1)$, while $\text{Var}(t \mapsto c^t) = |c-1| \ll 1$. Carrying out the above simplifications in reverse completes the proof of the lemma. 
\end{proof}
\section{Mertens' Theorem dissected} %
We will need the following result of Lichtman estimating the sum of reciprocals of smooth numbers with a given number of prime factors. 
\begin{theorem}[Lichtman \cite{Lichtman}, Theorem 4.1] \label{thm:lichtman}
Fix $\epsilon > 0$, and set $r \coloneqq \frac{k}{\log_2 y}$ and $$\eta(z) \coloneqq e^{\gamma z} \prod_p \left(1-\frac{1}p\right)^z\left(1-\frac{z}{p}\right)^{-1}.$$  As $y \to \infty$, we have, uniformly for $k\leq (2-\epsilon)\log_2 y$,
$$\sum_{\substack{P^+(A) \le y\\\Omega(A)=k}} \frac{1}{A} = \eta(r) \frac{(\log_2 y)^k}{k!}\left(1+ O_\epsilon\left(\frac{k}{(\log_2 y)^2}\right)\right).$$   
\end{theorem}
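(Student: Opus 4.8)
The plan is to exploit the Dirichlet-series/inductive structure of the sum $S_k(y) \coloneqq \sum_{P^+(A) \le y,\ \Omega(A)=k} 1/A$. The starting point is the recursion obtained by stripping off one prime factor: writing $A = p^j B$ with $p = P^+(A)$ is awkward, so instead I would use the cleaner identity $k\, S_k(y) = \sum_{p \le y} \sum_{j \ge 1} \frac{j}{p^j} S_{k-j}^{(p)}(y)$, where $S_{k-j}^{(p)}$ restricts to $p$-smooth numbers; more usefully, the generating function $\sum_{k \ge 0} S_k(y) z^k = \prod_{p \le y} (1 - z/p)^{-1}$ for $|z| < 2$, valid since the smallest prime is $2$. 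Cauchy's formula then gives $S_k(y) = \frac{1}{2\pi i}\oint \prod_{p\le y}(1-z/p)^{-1} \frac{dz}{z^{k+1}}$, and the natural move is a saddle-point analysis. Taking the circle $|z| = r$ with $r = k/\log_2 y$ (the relevant saddle, since $\sum_{p \le y} \frac{r/p}{1 - r/p} \approx r \log_2 y = k$ when $r$ is bounded away from $2$), one factors out $\prod_{p \le y}(1-r/p)^{-1}$ and is left with estimating $\frac{(\log_2 y)^k}{k!}$ from the $z^{-k-1}$ factor together with the arithmetic constant $\eta(r) = e^{\gamma r}\prod_p (1-1/p)^r (1-r/p)^{-1}$, which emerges from comparing $\prod_{p \le y}(1-r/p)^{-1}$ with $e^{r \log_2 y}$ via Mertens' theorem.

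Concretely, the main steps in order are: (1) establish the Euler-product identity for the generating function and justify the contour-integral representation on $|z| = r$; (2) use Mertens' theorem in the sharp form $\sum_{p \le y} 1/p = \log_2 y + M + O(1/\log y)$ to write $\prod_{p \le y}(1 - r/p)^{-1} = \eta(r)\, (\log_2 y)^r \cdot (\text{small error})$, where one must be careful that $\eta(r)$ is finite and bounded (this uses $r \le 2 - \epsilon$, so the local factors $(1-r/p)^{-1}$ converge); (3) on the arc near $z = r$ do the Laplace/saddle-point expansion, writing $z = re^{i\phi}$ and Taylor-expanding $\log \prod_{p\le y}(1 - re^{i\phi}/p)^{-1} - (k+1)\log(re^{i\phi})$ to second order in $\phi$ — the linear term vanishes by the saddle-point choice of $r$, the quadratic term has size $\asymp \log_2 y \cdot \phi^2$, giving the Gaussian integral and the factor $\frac{(\log_2 y)^k}{k!}$ after Stirling; (4) bound the contribution of the complementary arc ($|\phi|$ large) by showing $|\prod_{p\le y}(1-re^{i\phi}/p)^{-1}|$ is exponentially smaller there — the key being that at least one prime (namely $p=2$, or a positive-density set of primes) makes $|1 - re^{i\phi}/p|$ bounded away from its minimum; (5) track all error terms and collect them into the claimed $O_\epsilon(k/(\log_2 y)^2)$ — note this error is consistent with the requirement $r$ bounded, since $k/(\log_2 y)^2 = r/\log_2 y$.

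Alternatively, and perhaps more in the spirit of the surrounding paper (which leans on Alladi's $\Phi_k(x,y)$ estimates and Sathe–Selberg technology), one could derive the result by relating $S_k(y)$ to $\Phi_k$-type counts or to $\Pi_k(y) \coloneqq \#\{n \le y : \Omega(n) = k,\ P^+(n) \le y\}$ via partial summation, then invoking the classical Sathe–Selberg asymptotics (Theorem \ref{thm:Alladi2} with $x = y$, or Lemma \ref{lem:TEN}) to transfer the known shape $\frac{g(y,\mu)}{\Gamma(1+\mu)}\frac{(\log_2 y)^{k-1}}{(k-1)!}$ into the reciprocal-sum setting. The arithmetic factor $g(y, \mu) = \prod_{p<y}(1-1/p)^\mu \prod_{p \ge y}(1-1/p)^\mu(1-\mu/p)^{-1}$ would then need to be reconciled with $\eta(r)$; since $\prod_{p \ge y}(1-1/p)^\mu(1-\mu/p)^{-1} = 1 + O(\mu/\log y)$ and $\prod_{p < y}(1-1/p)^\mu \sim (e^{-\gamma}/\log y)^\mu$, one checks $g(y,\mu)(\log_2 y) = \eta(\mu)(1 + O(\cdot))$ after summing a geometric-type series over $k$ with weights $1/p^j$. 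Either route works; the contour-integral approach is cleaner for controlling the range $k$ up to $(2-\epsilon)\log_2 y$ uniformly.

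The main obstacle, in either approach, is the uniformity as $r = k/\log_2 y$ approaches $2$: the local Euler factor $(1 - r/p)^{-1}$ for $p = 2$ blows up as $r \to 2$, so both $\eta(r)$ and the saddle-point curvature degenerate, and the error term is only controlled because of the hypothesis $k \le (2-\epsilon)\log_2 y$. Keeping every implied constant's dependence on $\epsilon$ explicit through the saddle-point expansion — in particular showing the quadratic coefficient $\sum_{p\le y}\frac{(r/p)}{(1-r/p)^2} \asymp_\epsilon \log_2 y$ stays of the right order and the tail arc is genuinely negligible — is where the real work lies.
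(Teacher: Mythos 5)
You should first note that the paper does not prove this statement at all: it is quoted verbatim from Lichtman (his Theorem 4.1), and the only related argument in the paper is the proof of Theorem \ref{thm:ImprExtnLicht1}, which runs the same generating-function machinery at the \emph{fixed} radius $2-\epsilon$ and pushes the contour past the pole at $z=2$. Your main blueprint -- the Euler-product identity $\sum_{k\ge 0}S_k(y)z^k=\prod_{p\le y}(1-z/p)^{-1}$ valid for $|z|<2$, Cauchy's formula on the circle $|z|=r$ with $r=k/\log_2 y$, and Mertens'/PNT input to replace $\prod_{p\le y}(1-z/p)^{-1}$ by $\eta(z)(\log y)^{z}$ up to a tiny error -- is indeed the Sathe--Selberg/Tenenbaum framework underlying Lichtman's proof, so the skeleton is right.

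The genuine gap is in your steps (3)--(5). A Laplace/Gaussian expansion in $\phi$ followed by Stirling cannot produce the claimed error $O_\epsilon\bigl(k/(\log_2 y)^2\bigr)$ uniformly in $k\le(2-\epsilon)\log_2 y$: that route carries relative errors of order $1/k$ (from Stirling and from the cubic term in the phase, whose natural scale is the inverse square root of the variance $\asymp k$), and $1/k$ exceeds $k/(\log_2 y)^2$ whenever $k=o(\log_2 y)$; for bounded $k$ the saddle-point picture degenerates completely (the ``Gaussian arc'' has width $\asymp k^{-1/2}\asymp 1$, there is no complementary arc, and Stirling is not even asymptotic), yet the theorem is asserted uniformly down to small $k$, where $\eta(r)=1+\gamma r+\cdots$ already encodes a first-order correction finer than your expansion resolves. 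The actual mechanism is to never approximate the kernel: write the integrand as $G(z)e^{vz}z^{-k-1}$ with $v=\log_2 y$ and $G$ holomorphic on $|z|<2$ (essentially $\eta(z)$, after the PNT step), use the exact identities $\frac{1}{2\pi i}\oint e^{vz}z^{-k-1}\,\mathrm{d}z=v^k/k!$ and $\frac{1}{2\pi i}\oint\bigl(z-\tfrac kv\bigr)e^{vz}z^{-k-1}\,\mathrm{d}z=0$, so that the choice $r=k/v$ annihilates the linear term of $G(z)-G(r)$ exactly, and then bound the quadratic remainder by a lemma of the shape $\frac1{2\pi}\oint_{|z|=r}|z-r|^2e^{v\Re z}|z|^{-k-1}\,|\mathrm{d}z|\ll \frac{v^k}{k!}\cdot\frac k{v^2}$; this is \cite{tenenbaumAPNT}*{Theorem II.6.6} and its lemmas, with the $\epsilon$-dependence entering only through bounds on $G''$ on $|z|\le 2-\epsilon/2$. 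That exact-coefficient argument, not a Gaussian approximation, is where $k/(\log_2 y)^2$ comes from. Finally, your fallback route is shakier than you suggest: the sum runs over \emph{all} $y$-smooth $A$ with no bound on $A$, so partial summation against Sathe--Selberg counts of $n\le t$ with $\Omega(n)=k$ does not apply directly; you would need counts of smooth numbers with exactly $k$ prime factors together with a Rankin-type tail estimate.
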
 %

It will also be helpful to have some upper bound for the above sums %
that is valid for all values of $k$. Such a result can be obtained by an application of Rankin's method. 
\begin{lemma} \label{lem:coarsebd} We have uniformly for $y \ge 3$ and integers $J\geq 1$,
\begin{equation}\sum_{\substack{P^+(A) \le y\\ \Omega(A) = J}} \frac{1}{A} \ll \frac{J}{2^J}\log^2 y. \label{eq:coarsebd}
\end{equation}
\end{lemma}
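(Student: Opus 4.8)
The plan is to bound the sum by Rankin's method: for any parameter $\sigma \in (0,1]$, we have
\[
\sum_{\substack{P^+(A) \le y\\ \Omega(A) = J}} \frac{1}{A} \le \frac{1}{(1/2)^{J}}\cdot \left(\tfrac12\right)^{J}\!\!\sum_{\substack{P^+(A) \le y\\ \Omega(A) = J}} \frac{1}{A} \le 2^{-J} z^{-J} \sum_{P^+(A) \le y} \frac{z^{\Omega(A)}}{A}
\]
for any $z \in (1, 2)$, upon dropping the constraint $\Omega(A)=J$ after weighting each term by $z^{\Omega(A)-J}$ (valid since $z>1$ makes $z^{\Omega(A)-J}\geq 1$ exactly when $\Omega(A)\ge J$, so more carefully one writes $z^{-J}\sum_{\Omega(A)=J} z^{\Omega(A)}/A \le z^{-J}\sum_{P^+(A)\le y} z^{\Omega(A)}/A$). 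The remaining Euler product
\[
\sum_{P^+(A) \le y} \frac{z^{\Omega(A)}}{A} = \prod_{p \le y}\left(1 - \frac{z}{p}\right)^{-1}
\]
converges for $z<2$ since then $z/p < 1$ for all $p\ge 2$, and its logarithm is $\sum_{p\le y}\bigl(z/p + O(z^2/p^2)\bigr) = z\log_2 y + O(1)$ uniformly for $z$ bounded away from $2$; hence the product is $\ll (\log y)^{z}$. Thus for each admissible $z$,
\[
\sum_{\substack{P^+(A) \le y\\ \Omega(A) = J}} \frac{1}{A} \ll \frac{(\log y)^{z}}{(2z)^{J}}.
\]

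First I would choose $z$ to balance the two factors. Taking $z = 1 + c/J$ for a suitable small constant $c>0$ (more precisely, one can optimize, but $z=1+1/J$ when $J\ge 2$ already suffices, handling $J=1$ separately): then $(2z)^{-J} = 2^{-J}(1+1/J)^{-J} \asymp 2^{-J}$, while $(\log y)^{z} = (\log y)^{1+1/J} = \log y \cdot (\log y)^{1/J} \le \log y \cdot \exp(\log_2 y / J)$. This is not immediately of the claimed shape, so instead I would choose $z$ depending on both $J$ and $y$: set $z = \min\{1 + J/(2\log_2 y),\, 3/2\}$, say, chosen so that $z\in(1,3/2]\subset(1,2)$, keeping the Euler-product estimate uniform. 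With this choice, when $J \le \log_2 y$ one computes $(\log y)^{z}/(2z)^{J}$ and checks via $\log$ that it is $\ll J 2^{-J}\log^2 y$; when $J > \log_2 y$ the factor $2^{-J}$ is already so small that the crude bound $\sum_{P^+(A)\le y,\ \Omega(A)=J} 1/A \le (\log_2 y + O(1))^J/J! \ll (\log_2 y)^J/J!$ (from expanding $\bigl(\sum_{p\le y} 1/p + \tfrac12\sum 1/p^2 + \cdots\bigr)^J/J!$, which dominates termwise) combined with Stirling beats $J2^{-J}\log^2 y$ comfortably. A clean alternative for the large-$J$ regime: $\sum_{\Omega(A)=J} 1/A \le \frac{1}{J!}\bigl(\sum_{p\le y}\sum_{m\ge 1} p^{-m}\bigr)^J = \frac{(\log_2 y + O(1))^J}{J!}$, and since $e^{\log_2 y} = \log y$, Stirling gives this is $\ll 2^{-J}\log^2 y$ once $J \ge C\log_2 y$.

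So the argument would split into two ranges: for $J \le C \log_2 y$ (with $C$ absolute), use Rankin with the $y$-dependent choice of $z$ and verify the resulting expression is $\ll J 2^{-J}\log^2 y$ by taking logarithms and using $(\log y)^{1/J}\cdot\text{(correction)} \ll \log^2 y$ in this range; for $J > C\log_2 y$, use the factorial bound $\sum_{\Omega(A)=J}1/A \ll (\log_2 y)^J/J!$ together with $J! \gg (J/e)^J$ and the fact that $(\log_2 y)/J < 1/C$ is bounded below $1/2$ for $C$ large, making $(\log_2 y)^J/J! \ll 4^{-J} \ll 2^{-J}$, which absorbs the $\log^2 y$ and $J$ factors trivially. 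The main obstacle is organizing the two-regime split so the constants match up and the transition point $J \asymp \log_2 y$ is handled consistently in both estimates — in particular ensuring the Rankin bound in the first regime is genuinely $\ll J 2^{-J}\log^2 y$ rather than merely $\ll 2^{-J}(\log y)^{1+o(1)}$, which requires choosing $z$ carefully (not just $z=3/2$, which would only give $(\log y)^{3/2}$, too weak) so that the exponent of $\log y$ stays at most $2$ while $(2z)^{-J}$ stays within a factor $J$ of $2^{-J}$. This is a routine optimization once set up, but it is the only place where any care is needed.
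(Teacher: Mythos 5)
There are two genuine errors here, one in each regime of your split, and both trace back to the same phenomenon: the prime $2$, taken with high multiplicity, dominates these sums when $J$ is large, and neither of your bounds accounts for it.

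First, your displayed Rankin inequality, and hence the bound $\ll (\log y)^z/(2z)^J$, is false: the ``more careful'' version you state in parentheses gives only $\sum_{\Omega(A)=J,\,P^+(A)\le y} 1/A \le z^{-J}\prod_{p\le y}(1-z/p)^{-1} \ll z^{-J}(\log y)^z$ for $z\le 3/2$; the extra factor $2^{-J}$ has no source (a quick sanity check: for $J=\log_2 y$ the true sum is of size $(\log y)^{1+o(1)}/\sqrt{\log_2 y}$ by Theorem \ref{thm:lichtman}, while your claimed bound is at most $(\log y)^{1-\log 2+o(1)}$). Without that spurious gain, capping $z$ at $3/2$ yields at best $\exp\bigl(\tfrac32\log_2 y - J\log\tfrac32\bigr)$, which exceeds $J2^{-J}\log^2 y$ by a positive power of $\log y$ as soon as $J\gtrsim 1.8\log_2 y$. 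To produce the factor $2^{-J}$ one must take $z=2-O(1/J)$, and then the Euler factor at $p=2$, namely $(1-z/2)^{-1}\asymp J$, must be pulled out separately --- that is precisely where the factor $J$ in the lemma comes from. Second, the ``clean alternative'' for large $J$ is also false: the multinomial inequality goes the wrong way, since $\frac1{J!}\bigl(\sum_{p\le y}1/p\bigr)^J=\sum_{\Omega(A)=J}\frac{1}{A\prod_q a_q!}\le \sum_{\Omega(A)=J}\frac1A$ (here $a_q$ are the exponents in $A$), so $(\log_2 y+O(1))^J/J!$ is a lower-type, not an upper, bound. Concretely, the single term $A=2^J$ contributes $2^{-J}$, which already exceeds $(\log_2 y+O(1))^J/J!$ once $J\ge 6\log_2 y$, i.e.\ exactly in your second regime; indeed for such $J$ the sum is genuinely of order $2^{-J}\log^2 y$ (Theorem \ref{thm:ImprExtnLicht1}), driven by integers with many repeated factors of $2$, which the Poisson-style bound ignores.

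The repair is the paper's one-line argument, which needs no case split: apply Rankin with $z=2-1/J$, writing $z^{-J}\sum_{P^+(A)\le y} z^{\Omega(A)}/A = z^{-J}\bigl(1-\tfrac z2\bigr)^{-1}\prod_{3\le p\le y}(1-z/p)^{-1}$; then $z^{-J}=2^{-J}(1-\tfrac1{2J})^{-J}\asymp 2^{-J}$, the isolated factor $(1-\tfrac z2)^{-1}=2J$ supplies the factor $J$, and the remaining product is $\ll \exp\bigl(z\sum_{3\le p\le y}1/p\bigr)\ll(\log y)^z\le\log^2 y$.
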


\begin{proof}

For any $0<z<2$ we have 
\begin{align*}
\sum_{\substack{P^+(A)\le y\\ \Omega(A) = J}} \frac 1A < z^{-J} \sum_{\substack{P^+(A)\leq y}} \frac{z^{\Omega(A)}}{A} &= z^{-J} \left(1-\frac{z}{2}\right)^{-1} \prod_{3 \le p\leq y} \left(1-\frac{z}{p}\right)^{-1}\\ &\ll \frac{z^{-J}}{2-z} \exp\left(z\sum_{3 \le p \le y} \frac1p \right) \ll \frac{z^{-J}}{2-z} (\log y)^z.
\end{align*}
Taking $z = 2-1/J$ and noting that $(1-1/2J)^{-J} \asymp 1$, we obtain the desired estimate. \qedhere 
\end{proof} 

We will also need a version of Lichtman's theorem (Theorem \ref{thm:lichtman}) for ``large'' values of $k$, namely those where $k>(2+\epsilon)\log_2 y$. In fact, we show that \eqref{eq:coarsebd} above essentially gives the correct order of magnitude (up to the factor of $k$ in the numerator) for such $k$.  This result can be viewed as an extension of Lichtman's result on dissecting Mertens' theorem for very large values of $k$.  We define \[ \eta_o(z) \coloneqq e^{\gamma z} 2^{-z} \prod_{p>2}\left(1-\frac{1}p\right)^z\left(1-\frac{z}{p}\right)^{-1}\]
so that $\eta_0(z) = (1-z/2)\eta(z)$ for all $z \ne 2$.
\begin{theorem}\label{thm:ImprExtnLicht1} 
Fix $\epsilon \in (0, 1/2)$ and $A>1$. We have uniformly for $y \gg 1$ and $(2+\sqrt{5\epsilon}) \log_2 y \le k \le (\log y)^{1/2-\epsilon}$,
\begin{equation}\label{eq:ImpExtLich1}
\sum_{\substack{P^+(A) \le y\\ \Omega(A)=k}} \frac1A = \eta_o(2) \frac{\log^2 y}{2^k} \left(1 + O\left(\frac1{(\log y)^\epsilon \sqrt{\log_2 y}}\right)\right).
\end{equation}

\end{theorem}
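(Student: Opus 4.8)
The plan is that for $k$ this large, writing each $A$ counted by the sum as $A=2^a3^bm$ with $(m,6)=1$ peels off the dominant power of $2$ (and of $3$) and leaves $m$ of essentially normal size. Summing the geometric series in $a$ and $b$ gives
\[
\sum_{\substack{P^+(A)\le y\\\Omega(A)=k}}\frac1A=\frac3{2^k}\sum_{0\le j\le k}2^jT_j-\frac2{3^k}\sum_{0\le j\le k}3^jT_j,\qquad
T_j:=\sum_{\substack{(m,6)=1,\ P^+(m)\le y\\\Omega(m)=j}}\frac1m .
\]
For the first sum I would extend it to all $j\ge0$: since $\sum_{j\ge0}T_jz^j=\prod_{3<p\le y}(1-z/p)^{-1}$, setting $z=2$ gives $\sum_{j\ge0}2^jT_j=\prod_{3<p\le y}(1-2/p)^{-1}$, which by Mertens' theorem equals $\tfrac13\eta_o(2)\log^2 y\,(1+O(1/\log y))$; thus $\frac3{2^k}\sum_{j\ge0}2^jT_j=\eta_o(2)\frac{\log^2 y}{2^k}\bigl(1+O(1/\log y)\bigr)$. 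The second sum is crudely $\le\prod_{3<p\le y}(1-3/p)^{-1}\ll(\log y)^3$, so it contributes $\ll(\log y)^3/3^k$, which the lower bound on $k$ makes of strictly smaller order than the claimed error. So everything reduces to estimating the tail $\sum_{j>k}2^jT_j$.

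That tail is where the hypothesis on $k$ is used. The series $\sum_{j\ge0}z^jT_j=\prod_{3<p\le y}(1-z/p)^{-1}$ has nonnegative coefficients and converges for $|z|<5$, so Rankin's inequality gives, for any $z\in(2,5)$,
\[
\sum_{j>k}2^jT_j\le\Bigl(\tfrac2z\Bigr)^k\sum_{j\ge0}z^jT_j=\Bigl(\tfrac2z\Bigr)^k\prod_{3<p\le y}\Bigl(1-\tfrac zp\Bigr)^{-1}\ll_z\Bigl(\tfrac2z\Bigr)^k(\log y)^z ,
\]
again by Mertens. Writing $k=\kappa\log_2 y$ with $\kappa\ge2+\sqrt{5\epsilon}$, the right side is $\ll(\log y)^{\,z-\kappa\log(z/2)}$; choosing $z=\kappa$ (or, when $\kappa\ge5$, any fixed $z\in(2,5)$, which only helps) makes the exponent $\kappa\bigl(1-\log(\kappa/2)\bigr)$. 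An elementary calculation shows that $\kappa\mapsto\kappa(1-\log(\kappa/2))$ is strictly decreasing for $\kappa>2$ and that at $\kappa=2+\sqrt{5\epsilon}$ it equals $2-\tfrac54\epsilon+O(\epsilon^{3/2})$; in particular it is $<2-\epsilon$ for every $\epsilon\in(0,\tfrac12)$, with a gap bounded below by a positive multiple of $\epsilon$. Hence $\sum_{j>k}2^jT_j\ll(\log y)^{2-\epsilon}/\sqrt{\log_2 y}$, the surplus in the exponents absorbing the $\sqrt{\log_2 y}$. This is exactly why the constant $2+\sqrt{5\epsilon}$ appears.

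Assembling the pieces gives the stated formula, the $O(1/\log y)$ from Mertens being dominated by the tail bound; the upper restriction $k\le(\log y)^{1/2-\epsilon}$ is needed only to keep the secondary terms---the $(\log y)^3/3^k$ contribution above, and the truncation errors in the convergent Euler products---strictly below the stated error. The heart of the argument, and the step I expect to be the main obstacle, is the tail estimate: one must show that $\sum_{j>k}2^jT_j$ is smaller than the full sum $\asymp\log^2 y$ by the precise factor demanded, and it is the interplay between Rankin's inequality and the behaviour of $\kappa\mapsto\kappa(1-\log(\kappa/2))$ near $\kappa=2$ that pins down the exact form of the lower bound on $k$.
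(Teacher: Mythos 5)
Your decomposition $A=2^a3^bm$ with $(m,6)=1$, the Mertens evaluation of the $2$-adic part (the bookkeeping $\prod_{3<p\le y}(1-2/p)^{-1}=\tfrac13\eta_o(2)\log^2 y\,(1+O(1/\log y))$ is correct), and the Rankin bound for the tail $\sum_{j>k}2^jT_j$ constitute a genuinely more elementary route than the paper's, which extracts the coefficient of $z^k$ from $\prod_{p\le y}(1-z/p)^{-1}$ by a contour integral, replaces the product by $\eta(z)(\log y)^z$ via the prime number theorem, and collects the residue at $z=2$ as in \eqref{eq:I0}; your inequality $\kappa(1-\log(\kappa/2))<2-\epsilon$ at $\kappa=2+\sqrt{5\epsilon}$ is the same computation as the paper's bound $\tau(\epsilon_1)\ge\epsilon_1^2/5$. (A small uniformity point: take the fixed weight $z=2+\sqrt{5\epsilon}$ rather than $z=\kappa$, so the Mertens constant does not depend on $k$; your own monotonicity observation shows this single choice covers the whole range.)

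The genuine gap is the $3$-adic term $\frac2{3^k}\sum_{0\le j\le k}3^jT_j$. Your crude bound $\ll(\log y)^3/3^k$, compared with the claimed error $\asymp(\log y)^{2-\epsilon}/(2^k\sqrt{\log_2 y})$, gives a ratio $(2/3)^k(\log y)^{1+\epsilon}\sqrt{\log_2 y}$, which is $o(1)$ only when $k\log(3/2)\ge(1+\epsilon)\log_2 y+\tfrac12\log_3 y+O(1)$, i.e.\ $k\gtrsim 2.47(1+\epsilon)\log_2 y$; this is not implied by $k\ge(2+\sqrt{5\epsilon})\log_2 y$ (for $\epsilon=0.01$ the hypothesis allows $k\approx 2.23\log_2 y$ while the bound needs $k\ge 2.49\log_2 y$), so the sentence claiming the lower bound on $k$ makes this term smaller than the claimed error is unjustified. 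The repair, when $\kappa=k/\log_2 y\le 3$, is not to extend the truncated sum but to run Rankin on it with weight $z=\kappa$, giving $\frac2{3^k}\sum_{j\le k}3^jT_j\ll(\log y)^{\kappa(1-\log\kappa)}$, and the required inequality rearranges to $\kappa(1-\log(\kappa/2))<2-\epsilon$, exactly what you already verified. For $\kappa\ge 3$, however, the truncation saves nothing (the mass of $\sum_j 3^jT_j$ sits at $j\approx 3\log_2 y\le k$), the term genuinely has order $(\log y)^3/3^k$, and it lies below the claimed error only when $\kappa\log(3/2)>1+\epsilon$, which follows from $\kappa\ge 2+\sqrt{5\epsilon}$ only for $\epsilon$ up to roughly $0.34$. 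So your argument can be completed for such $\epsilon$, but for $\epsilon$ nearer $1/2$ your identity exhibits a secondary term that the stated error does not absorb in the thin range $\kappa\in[2+\sqrt{5\epsilon},(1+\epsilon)/\log(3/2))$ --- a point worth flagging, since the paper's contour argument also glosses over it: when $\sqrt{5\epsilon}>1$ the circle $|z|=2+\sqrt{5\epsilon}$ has radius exceeding $3$, and the shift in \eqref{eq:I0} crosses the pole of $\eta(z)$ at $z=3$, whose residue is of size $\asymp\log^3 y/3^k$ and is not accounted for there.
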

Note that \begin{equation*}
\eta_o(2) = \frac{e^{2\gamma}}{4}\prod_{p> 2} \left(1+\frac{1}{p(p-2)}\right)=1.201303\ldots. 
\end{equation*}
\begin{proof}
In what follows, let $\epsilon_1 \coloneqq \sqrt{5\epsilon}$. We adapt the proof of \cite{tenenbaumAPNT}*{Theorem II.6.6}. 
The sum on the left hand side of \eqref{eq:ImpExtLich1} is the coefficient of $z^k$ in the function \[\sum\limits_{n:\,P^+(n) \le y} \frac{z^{\Omega(n)}}n = \prod_{p \le y} \left(1-\frac zp\right)^{-1},\] which is holomorphic on the disk $|z|<2-\epsilon/2$. As such, the sum in \eqref{eq:ImpExtLich1} equals 
\begin{equation*}%
\frac1{2\pi i} \oint\limits_{|z|=2-\epsilon} \prod_{p \le y} \left(1-\frac zp\right)^{-1} \, \frac{\mathrm{d}z}{z^{k+1}}.   
\end{equation*}
By the Prime Number Theorem (with the usual de la Vall\'ee Poussin error term), 
$$\prod_{p \le y} \left(1-\frac1p\right)^z = \frac{e^{-\gamma z}}{(\log y)^z}\left(1+O(\exp(-K\sqrt{\log y}))\right)$$
for some absolute constant $K>0$. Moreover, for $|z| \le 2-\epsilon$, we have 
\begin{equation*}
\prod_{p>y} \left(1-\frac1p\right)^z \left(1-\frac zp\right)^{-1} = \exp\left(\sum_{p>y} \left\{z\log \left(1-\frac1p\right) - \log\left(1-\frac zp\right)\right\}\right) = 1+O\left(\frac1{y\log y}\right).
\end{equation*}
Consequently, 
\begin{equation}\label{eq:CauchyIntFor2}
\sum_{\substack{P^+(A) \le y\\ \Omega(A)=k}} \frac1A = I + O\left(\exp(-K \sqrt{\log y}) \int_{|z| = 2-\epsilon} \frac{(\log y)^{\Re z}}{|z|^{k+1}} \, |dz|\right),
\end{equation}
where 
\begin{equation}\label{eq:I0}
I \coloneqq \frac1{2\pi i} \oint\limits_{|z|=2-\epsilon} \frac{\eta(z) (\log y)^z}{z^{k+1}} \, \mathrm{d}z = \eta_o(2) \frac{\log^2 y}{2^k} - \frac1{2\pi i} \oint\limits_{|z|=2+\epsilon_1} \frac{\eta(z) (\log y)^z}{z^{k+1}} \, \mathrm{d}z  
\end{equation}
since the residue of the function $\eta(z) (\log y)^z/z^{k+1}$ at the simple pole $z=2$ is precisely $-\eta_o(2)\log^2 y/2^k$. The error term in \eqref{eq:CauchyIntFor2} is $\ll (\log y)^{2-\epsilon} \exp(-K \sqrt{\log y})$, which is negligible compared to the error term in \eqref{eq:ImpExtLich1} since $k \le (\log y)^{1/2-\epsilon}$. Moreover, the last integral in \eqref{eq:I0} is 
\begin{equation*}
\begin{split}
\ll \oint\limits_{|z|=2+\epsilon_1} \frac{(\log y)^{\Re z}}{|z|^{k+1}} \, |\mathrm{d}z| \ll \frac1{(2+\epsilon_1)^k} \int_0^{2\pi} \exp((2+\epsilon_1) \log_2 y \cos\theta) \, \mathrm{d}\theta \ll \frac{(\log y)^{2+\epsilon_1}}{(2+\epsilon_1)^k \sqrt{\log_2 y}}, 
\end{split}
\end{equation*}
where we have used the fact that $\int_0^{2\pi} e^{\lambda \cos\theta} \, \mathrm{d}\theta \ll e^\lambda/\sqrt{\lambda}$ for all $\lambda>1$ (see \cite{tenenbaumAPNT}*{p. 302}). Finally, since $k \ge (2+\epsilon_1) \log_2 y$, the last expression in the above display is 
\begin{equation*}
\begin{split}
 = \frac{(\log y)^{2+\epsilon_1}}{2^k \sqrt{\log_2 y}} \left(\frac2{2+\epsilon_1}\right)^k &\le \frac{(\log y)^{2+\epsilon_1}}{2^k \sqrt{\log_2 y}} \left(\frac2{2+\epsilon_1}\right)^{(2+\epsilon_1) \log_2 y}\\ &= \frac{\log^2 y}{2^k \sqrt{\log_2 y}} (\log y)^{-\tau(\epsilon_1)} \ll \frac{\log^2 y}{2^k} \frac1{(\log y)^\epsilon \sqrt{\log_2 y}},  
\end{split}
\end{equation*} 
where we have noted that for $\epsilon \in (0, 1/2)$, we have $\epsilon_1 = \sqrt{5\epsilon} \in (0, 1.6)$, so that
$$\tau(\epsilon_1) \coloneqq 2\left\{\left(1+\frac{\epsilon_1}2\right) \log\left(1+\frac{\epsilon_1}2\right) - \frac{\epsilon_1}2\right\} \ge \frac{\epsilon_1^2}5 = \epsilon.$$
Collecting estimates completes the proof of the theorem. 
\end{proof}
\begin{remark} 
Although this shall not be essential for us, it is worth noting that the range of $k$ in Theorem \ref{thm:ImprExtnLicht1} can be extended to $(2+\epsilon_y) \log_2 y \le k \le (\log y)^{1/2-\epsilon}$ for any positive parameter $\epsilon_y < 1.6$ depending on $y$: In fact, we can show that in this range of $k$,
\begin{equation*}
\sum_{\substack{P^+(A) \le y\\ \Omega(A)=k}} \frac1A = \eta_o(2) \frac{\log^2 y}{2^k} \left\{1 + O\left(\frac1{\epsilon_y\sqrt{\log_2 y}}\exp\left(-\frac{\epsilon_y^2}5 \log_2 y\right)\right)\right\}, 
\end{equation*}
where the $\epsilon_y^2/5$ may be replaced by $\epsilon_y^2/4$ if $\epsilon_y \ll (\log_2 y)^{-1/3}$ as $y \rightarrow \infty$.

In order to show this, we carry out the above argument until \eqref{eq:I0} with $\epsilon$ replaced by $\epsilon_y$. In order to bound the corresponding analogue of the last integral in \eqref{eq:I0}, we note that since $\eta(z)$ has a simple pole at $z=2$, we have $\eta(z) \ll 1/|z-2| \ll 1/||z|-2| = 1/\epsilon_y$ on the circle $|z| = 2+\epsilon_y$. The above calculations now show that this integral is 
$$\ll \frac{\log^2 y}{2^k} \cdot \frac{\exp(-\tau(\epsilon_y) \log_2 y)}{\epsilon_y \sqrt{\log_2 y}} \ll \frac{\log^2 y}{2^k} \frac1{\epsilon_y\sqrt{\log_2 y}}\exp\left(-\frac{\epsilon_y^2}5 \log_2 y\right),$$
if $\epsilon_y \in (0, 1.6)$. If $\epsilon_y \ll (\log_2 y)^{-1/3}$ as $y \rightarrow \infty$, then we may note that $\tau(\epsilon_y) = (2+\epsilon_y) \log(1+\epsilon_y/2) - \epsilon_y = \epsilon_y^2/4 + O(\epsilon_y^3) = \epsilon_y^2/4 + O(1/\log_2 y)$, allowing us to replace $\epsilon_y^2/5$ by $\epsilon_y^2/4$ in the last bound in the above display. \hfill \qedsymbol
\end{remark}

\section{Proof of Theorem \ref{thm:main}}

We assume throughout that $\beta<1-\epsilon$. We start by %
letting $K \coloneqq 1.02/\log 2 \approx 1.4715$, and note that by the second assertion in Lemma \ref{lem:TEN}, %
the number of $n \le x$ divisible by $p$ which have more than $2K \log_2 x$ prime divisors is $\ll x/p(\log x)^{1.04}$, 
which is negligible for our purposes. Hence, it remains to show that the asymptotic formulae claimed for $\overline M_p(x)$ hold true for the number $N_p (x)$ of positive integers $n \le x$ having $\Omega(n) \equiv 1 \pmod 2$, $\Omega(n) \le 2K \log_2 x$ and $p_{(\Omega(n)+1)/2}(n) = p$.

Any positive integer $n>1$ counted in $N_p(x)$ can be uniquely written in the form $n=ApB$ for some positive integers $A \le x/p$ and $B \le x/Ap$ with $P^+(A) \le p \le P^-(B)$ and with $\Omega(A) = \Omega(B) = k$, where $\Omega(n) = 2k+1 \ge 1$. As such, \begin{equation}\label{NpFirstExpr}
N_p(x) = 1 + \sum_{k \le K \log_2 x} \sum_{\substack{A \le x/p\\ P^+(A) \le p \\ \Omega(A) = k}} \sum_{\substack{B \le x/Ap \\ P^-(B) \ge p \\ \Omega(B) = k}} 1 = \sum_{0 \le k \le K \log_2 x} \sum_{\substack{A \le x/p\\ P^+(A) \le p \\ \Omega(A) = k}} \Phi_k \left(\frac x{Ap}, p\right).
\end{equation}
We first consider the case $p> \exp((\log_2 x)^3)$, or equivalently $\beta> 3 \log_3 x/\log_2 x$. Since $Ap^3 \le p^{k+3} \le \exp(3K \log_2 x (\log x)^{1-\epsilon}) < x^{1/2}$, we have $p \le \sqrt{x/Ap}$, and Theorem \ref{thm:Alladi} yields 
$$N_p(x) = \sum_{k \le K \log_2 x} \sum_{\substack{A \le x/p\\ P^+(A) \le p \\ \Omega(A) = k}} \frac{x/Ap}{\log(x/Ap)} \frac{e^{-\gamma \xi_A} (\log u_A)^{k-1}}{\Gamma(1+\xi_A) (k-1)!} \left(1 + O\left(\frac1{\sqrt{\log u_A}}\right)\right)$$
where $u_A \coloneqq \frac{\log(x/Ap)}{\log p}$ and $\xi_A \coloneqq \frac k{\log u_A - \gamma}$. Now since $\log(Ap) \le (k+1) \log p \ll \log_2 x \log p$, we have $\log\left(\frac x{Ap}\right) = \log x \left(1 + O\left(\frac{\log_2 x \log p}{\log x}\right)\right)$. Consequently, recalling that $u = \frac{\log x}{\log p}$, we obtain
$$u_A = u \left(1 + O\left(\frac{\log_2 x \log p}{\log x}\right)\right) \implies (\log u_A)^{k-1} = (\log u)^{k-1} \left(1 + O\left(\frac{\log_2 x \log p}{\log x}\right)\right),$$
where the implication above uses the fact that $\log u = (1-\beta) \log_2 x > \epsilon \log_2 x$. Likewise, $\xi_A = \xi\left(1 + O\left(\frac{\log p}{\log x}\right)\right)$ where $\xi= \frac k{\log u-\gamma}$, so that $e^{-\gamma \xi_A} = e^{-\gamma \xi} \left(1 + O\left(\frac{\log p}{\log x}\right)\right)$, and by Lagrange's Mean Value Theorem,
$$\Gamma(1+\xi_A) = \Gamma(1+\xi) + O\left(\frac{\log p}{\log x}\right) = \Gamma(1+\xi) \left(1 + O\left(\frac{\log p}{\log x}\right)\right).$$
Collecting estimates, we now obtain
\begin{equation}\label{eq:Npexpr1}
N_p(x) = \frac x{p \log x} \left(1 + O\left(\frac1{\sqrt{\log_2 x}}\right)\right) \sum_{k \le K \log_2 x} \frac{e^{-\gamma \xi}}{\Gamma(1+\xi)} \frac{(\log u)^{k-1}}{(k-1)!} \sum_{\substack{P^+(A) \le p \\ \Omega(A) = k}} \frac 1A,
\end{equation} 
where we have recalled that for any $k \le K \log_2 x$, we have $p^{k+1} \le x$ for all sufficiently large $x$, which implies that any $A$ counted in the inner sum above is automatically $\le x/p$. 

We now write $N_p(x) = N_{p, 1}(x) + N_{p, 2}(x)$, where $N_{p, 1}(x)$ is defined by restricting the outer sum in \eqref{eq:Npexpr1} to $k \le (2-\delta) \log_2 p = (2-\delta) \beta \log_2 x$ for some $\delta \in (0, 1)$, to be chosen small enough in terms of $\epsilon$ (all our statements henceforth will be valid for all $\delta \ll_\epsilon 1$). 

\medskip
\textit{Estimation of $N_{p, 1}(x)$}: In order to estimate $N_{p, 1}(x)$, we invoke Theorem \ref{thm:lichtman} to estimate the inner sum on $A$. This shows that $N_{p, 1}(x)$ is  
\begin{equation*}
\begin{split}
&\frac x{p \log x} \left(1 + O\left(\frac1{\sqrt{\log_2 x}} + \frac 1{\log_2 p}\right)\right) \sum_{k \le (2-\delta) \log_2 p} \eta\left(\frac k{\log_2 p}\right) \frac{e^{-\gamma \xi}}{\Gamma(1+\xi)} \frac{(\log u)^{k-1}}{(k-1)!} \frac{(\log_2 p)^k}{k!}\\
&= \frac x{p \log x \log u} \left(1 + O\left(\frac1{\sqrt{\log_2 x}} + \frac 1{\log_2 p}\right)\right) \sum_{k \le (2-\delta) \log_2 p} \eta\left(\frac k{\log_2 p}\right) \frac{ke^{-\gamma \xi}}{\Gamma(1+\xi)} \binom{2k}k \frac{w^{2k}}{(2k)!}
\end{split}
\end{equation*}
where we have defined 
\[w \coloneqq \sqrt{\log u \log_2 p} = \sqrt{\beta(1-\beta)} \log_2 x,\]
the geometric mean of $\log u $ and $\log_2 p$. We will find that the values of $k$ which are most significant in the sum above are those with $k \approx w$.

By Stirling's estimate, we see that $\binom{2k}k = \frac{2^{2k}}{\sqrt{\pi k}} \left(1+O\left(\frac 1k \right)\right)$; hence
\begin{multline}\label{eq:Np1Expr1}
N_{p, 1}(x) = \frac x{\pi^{1/2} p \log x \log u} \left(1 + O\left(\frac1{\sqrt{\log_2 x}} + \frac 1{\log_2 p}\right)\right)\\ \sum_{k \le (2-\delta) \log_2 p} \eta\left(\frac k{\log_2 p}\right) \frac{k^{1/2} e^{-\gamma \xi}}{\Gamma(1+\xi)} \frac{(2w)^{2k}}{(2k)!} \left(1+O\left(\frac1k\right)\right).
\end{multline}

Now invoking Lemma \ref{lem:NortonAppGen} with $W=2w$ and $E=6\sqrt{w\log w} \eqqcolon 2w'$, we see that
\begin{align}
\sum_{k \le \min\{w-w', (2-\delta) \log_2 p\}} &\eta\left(\frac k{\log_2 p}\right) \frac{k^{1/2} e^{-\gamma \xi}}{\Gamma(1+\xi)} \frac{(2w)^{2k}}{(2k)!} \nonumber \\
&\ll w^{1/2} \sum_{m \le 2w-2w'} \frac{(2w)^m}{m!}\nonumber\\
&\ll \frac{e^{2w}}{w^8 \sqrt{\log w}} \ll \frac{(\log x)^{2 \sqrt{\beta(1-\beta)}}}{(\log_2 x)^4 (\log_3 x)^4}
\label{eq:NortonApp1}
\end{align}
where we have noted that $w = \sqrt{\beta(1-\beta)} \log_2 x \gg \beta^{1/2} \log_2 x \gg \sqrt{\log_2 x \log_3 x}$ since $3 \log_3 x/\log_2 x < \beta < 1-\epsilon$. This shows that the total contribution from $k \le \min\{w-w', (2-\delta) \log_2 p\}$ to the right hand side of \eqref{eq:Np1Expr1} is 
$$\ll \frac 1{(\log_2 x \log_3 x)^4} \frac x{p (\log x)^{1-2\sqrt{\beta(1-\beta)}} \log_2 x}$$%
which is negligible in comparison to our error terms in either of the two cases $\beta<1/5-\epsilon$ or $\beta \in (1/5+\epsilon, 1-\epsilon)$. (For $\beta<1/5-\epsilon$, we make use of the easy fact that $1-2\sqrt{\beta(1-\beta)} > 1/2-3\beta/2$ for all $\beta \in (0, 1)$.) In particular, for $\beta<1/5-\epsilon$, we have $(2-\delta) \log_2 p < w-w'$, hence the above argument shows that the count $N_{p, 1}(x)$ itself is absorbed in the claimed error term.

Now if $\beta \in (1/5+\epsilon, 1-\epsilon)$, then the total contribution of $k \in (w+w', (2-\delta) \log_2 p]$ to the right hand side of \eqref{eq:Np1Expr1} is, by another application of Lemma \ref{lem:NortonAppGen},
\begin{equation*}
\begin{split}
&\ll \frac x{p \log x \log u} \sum_{w+w' < k \le (2-\delta) \log_2 p} \eta\left(\frac k{\log_2 p}\right) \frac{k^{1/2} e^{-\gamma \xi}}{\Gamma(1+\xi)} \frac{(2w)^{2k}}{(2k)!}\\
&\ll \frac{x \sqrt{\log_2 p}}{p \log x \log u} \sum_{m > 2w+2w'} \frac{(2w)^m}{m!} \ll \frac1{(\log_2 x)^9\sqrt{\log_3 x}} \frac x{p (\log x)^{1-2\sqrt{\beta(1-\beta)}}\sqrt{\log_2 x}}
\end{split}
\end{equation*}
which is again negligible in comparison to the error term. (Here we have noted that $w = \sqrt{\beta(1-\beta)} \log_2 x \asymp \log_2 x$.) 

In the same range of $\beta$, we find that the interval $[w - w', w + w']$ gives the main contribution to the sum in \eqref{eq:Np1Expr1}. For $k$ in this range  we have $$k = w +O\left(\sqrt{w\log w}\right)=w\left(1+O\left(\sqrt{\frac{{\log_3 x}}{{\log_2 x}}}\right)\right),$$ and so throughout this range we find that %
$\xi = \frac k{\log u} \left(1-\frac \gamma{\log u}\right)^{-1} = \sqrt{\frac \beta{1-\beta}} + O\left(\sqrt{\frac{\log_3 x}{\log_2 x}}\right)$
and $\frac k{\log_2 p} = \sqrt{\frac{1-\beta}\beta} + O\left(\sqrt{\frac{\log_3 x}{\log_2 x}}\right)$.  From this a routine calculation shows that for all such $k$ we can write \[\eta\left(\frac k{\log_2 p}\right) \frac{k^{1/2} e^{-\gamma \xi}}{\Gamma(1+\xi)} = \frac{\eta\left(\sqrt{\frac{1-\beta}\beta}\right) \exp\left(-\gamma \sqrt{\frac \beta{1-\beta}}\right)\sqrt{w}}{ \Gamma\left(1+\sqrt{\frac \beta{1-\beta}}\right)}  \left(1+O\left(\sqrt{\frac{\log_3 x}{\log_2 x}}\right) \right).\] Consequently, for $\beta \in (1/5+\epsilon, 1-\epsilon)$, the contribution of $k \in [w-w', w+w']$ to $\Npox$ is   
\begin{align*}
\frac x{\sqrt{\pi} p \log x \log u} &\left(1 + O\left(\frac1{\sqrt{\log_2 x}}\right)\right) \sum_{w - w' \le k \le w + w'} \eta\left(\frac k{\log_2 p}\right) \frac{k^{1/2} e^{-\gamma \xi}}{\Gamma(1+\xi)} \frac{(2w)^{2k}}{(2k)!} \\
=&\ \frac{\eta\left(\sqrt{\frac{1-\beta}\beta}\right) e^{-\gamma \sqrt{\frac \beta{1-\beta}}}}{\sqrt{\pi}\  \Gamma\left(1+\sqrt{\frac \beta{1-\beta}}\right)} \frac{x\sqrt{w}}{p \log x \log u} \left(1+O\left(\sqrt{\frac{\log_3 x}{\log_2 x}}\right) \right) \sum_{w - w' \le k \le w + w'} \frac{(2w)^{2k}}{(2k)!}\nonumber \\
=&\ \frac{\overline{C_\beta} x}{p (\log x)^{1-2\sqrt{\beta(1-\beta)}} \sqrt{\log_2 x}} \left(1+O\left(\sqrt{\frac{\log_3 x}{\log_2 x}}\right) \right) \nonumber
\end{align*}
where we have used Lemma \ref{lem:NortonAppGen} to extend the sum over all values of $k$ and noted that $\sum\limits_{k=0}^\infty \frac{(2w)^{2k}}{(2k)!} = \frac12(e^{2w} + e^{-2w}) = \frac{e^{2w}}{2} + O(e^{-2w})$. %
We have thus established that
\begin{equation}\label{Np1Asymp}
N_{p, 1}(x) =
\begin{cases}
  \displaystyle{\frac{\overline C_\beta x}{p (\log x)^{1-2\sqrt{\beta(1-\beta)}} \sqrt{\log_2 x}} } \left(1+O\left(\sqrt{\frac{\log_3 x}{\log_2 x}}\right) \right) & \text{if  }\frac 15+\epsilon < \beta < 1-\epsilon,\\
\vspace{-2mm}\\
 O\left( \displaystyle{ \frac x{p (\log x)^{1/2-3\beta/2}(\log_2 x \log_3 x)^4}} \right)%
& \text{if  }\frac{3 \log_3 x}{\log_2 x} < \beta < \frac 15-\epsilon.
\end{cases}
\end{equation}
\medskip 
\noindent\textit{Estimation of $N_{p, 2}(x)$}: We recall that 
$$N_{p, 2}(x) = \frac x{p \log x} \left(1 + O\left(\frac1{\sqrt{\log_2 x}}\right)\right) \sum_{(2-\delta)\log_2 p < k \le K \log_2 x} \frac{e^{-\gamma \xi}}{\Gamma(1+\xi)} \frac{(\log u)^{k-1}}{(k-1)!} \sum_{\substack{P^+(A) \le p \\ \Omega(A) = k}} \frac 1A.$$
For $\beta \in (1/5+\epsilon, 1-\epsilon)$, we invoke Lemma \ref{lem:coarsebd} to obtain 
$$N_{p, 2}(x) \ll \frac x{p \log x} \sum_{(2-\delta)\log_2 p < k \le K \log_2 x} \frac{(\log u)^{k-1}}{(k-1)!} \frac k{2^k} \log^2 p \ll \frac{x \log_2 x \log^2 p}{p \log x} \sum_{k>(2-\delta)\log_2 p} \frac{v^k}{k!}$$
where $v\coloneqq\frac12 \log u \asymp \log_2 x$. Defining $\theta$ so that $v(1+\theta) = (2-\delta) \log_2 p$, we see that $\theta = \frac{2(2-\delta)}{1/\beta-1}-1 \asymp 1$. An application of \eqref{eq:Norton47} now yields
\begin{align}
N_{p, 2}(x) &\ll \frac{x \log_2 x \log^2 p}{p \log x} \cdot \frac{\exp\left(v(R(\theta)+1)\right)}{\sqrt v}\nonumber \\ &\ll \frac{x \sqrt{\log_2 x}(\log x)^{2\beta}\exp\left((\frac {1-\beta}2)(\frac{2(2-\delta)}{1/\beta-1})(1-\log\left(\frac{2(2-\delta)}{1/\beta-1}\right))\log_2 x\right)}{p \log x} \nonumber\\
&= \frac{x \sqrt{\log_2 x}}{p (\log x)^{1-2\sqrt{\beta(1-\beta)}+F(\beta, \delta)}}, \label{eq:Np2LargeBeta} 
\end{align}
where $F(\beta, \delta) \coloneqq 2\sqrt{\beta(1-\beta)} - (4-\delta) \beta + (2-\delta) \beta \log\left(\frac{2 \beta(2-\delta)}{1-\beta}\right)$. 

We claim that for any $\delta \ll_\epsilon 1$, $G(\delta) \coloneqq \inf_{\beta \in [1/5+\epsilon, 1-\epsilon]} F(\beta, \delta) > 0$. Indeed, since $F$ is continuous on $[1/5+\epsilon, 1-\epsilon] \times [0, 1]$, so is $G$ on $[0, 1]$; hence it suffices to show that $G(0) = \inf_{\beta \in [1/5+\epsilon, 1-\epsilon]} F(\beta, 0)>0$. But this in turn is an immediate consequence of the observation that $F\left(\frac15, 0\right)=0$ and that $$F(\beta,0) = 2\sqrt{\beta(1-\beta)}-4\beta+2\beta\log\left(\frac{4\beta}{1-\beta}\right)$$ is strictly increasing for $\beta>\frac 15$. This proves our claim. As such, for any fixed $\delta \ll_\epsilon 1$ and $c = c(\epsilon, \delta) \in (0, G(\delta)/2)$, we see that $F(\beta, \delta)>2 c$ for all $\beta \in (1/5+\epsilon, 1-\epsilon)$, %
leading to $$N_{p, 2}(x) \ll \frac{x}{p (\log x)^{1-2\sqrt{\beta(1-\beta)}+c}}$$ for all such $\beta$.

Now suppose $3\log_3 x/\log_2 x < \beta < 1/5-\epsilon$ and set $v'=2\sqrt{v\log v}$. We proceed as in \eqref{eq:NortonApp1} to bound the contributions of $k \in  ((2-\delta) \log_2 p, v - v'] \cup [v+v', K \log_2 x]$ in $N_{p,2}(x)$. Indeed, invoking \eqref{eq:coarsebd} %
to bound the sum of $1/A$, %
we see that this contribution is 
\begin{equation*}%
\begin{split}
&\ll \frac{x \log^2 p}{p \log x} \sum_{(2-\delta) \log_2 p \le k \le v - v'} \frac{(\log u)^{k-1}}{(k-1)!} \frac k{2^k} + \frac {x \log^2 p}{p \log x} \sum_{v + v' \le k \le K \log_2 x} \frac{(\log u)^{k-1}}{(k-1)!} \frac k{2^k}\\
&\ll \frac{x \log_2 x \log^2 p}{p \log x} \left(\sum_{k \le v - v'} \frac{v^k}{k!} + \sum_{k \ge v + v'} \frac{v^k}{k!}\right) \ll \frac1{\log_2 x\sqrt{\log_3 x}} \frac x{p(\log x)^{1/2-3\beta/2}}
\end{split}
\end{equation*}
which is negligible in comparison to the error term. Finally we use Theorem \ref{thm:ImprExtnLicht1} to estimate the contribution to $N_{p, 2}(x)$ from the range $(v- v', v + v')$.  To do so, we choose $\epsilon_0<\frac 12$ sufficiently small so that $(2+\sqrt{5\epsilon_0})\log_2 p < v-v'$. For sufficiently large $x$, the choice $\epsilon_0 = \epsilon^2$ suffices. Hence, the sought contribution is
\begin{equation*}
\begin{split}
\frac x{p \log x}& \left(1 + O\left(\frac1{\sqrt{\log_2 x}}\right)\right) \sum_{v-v' < k < v + v'} \frac{e^{-\gamma \xi}}{\Gamma(1+\xi)} \frac{(\log u)^{k-1}}{(k-1)!} \sum_{\substack{P^+(A) \le p \\ \Omega(A) = k}} \frac 1A\\
&= \frac{\eta_o(2) e^{-\frac{\gamma}2}}{2\Gamma(3/2)} \frac{x \log^2 p}{p \log x} \left(1+O\left(\sqrt{\frac{\log_3 x}{\log_2 x}} + \frac1{(\log p)^{\epsilon_0} \sqrt{\log_2 p}}\right) \right) \sum_{v- v' < k < v + v'} \frac{v^{k-1}}{(k{-}1)!} , 
\end{split} 
\end{equation*}
where we noted that for all $k$ in the above range, $\xi = 1/2+O(\sqrt{\log_3 x/\log_2 x})$, so that $e^{-\gamma \xi} = e^{-\gamma/2} (1 + O(\sqrt{\log_3 x/\log_2 x}))$ and $\Gamma(1+\xi) = \Gamma(3/2) (1 + O(\sqrt{\log_3 x/\log_2 x}))$. The constant in the last display above is exactly $\overline C$, while the sum on $v$ is $e^v(1 + O(1/v^2 \sqrt{\log v})) = (\log x)^{\frac12(1-\beta)}(1 + O(1/(\log_2 x)^2 \sqrt{\log_3 x}))$ by Lemma \ref{lem:NortonAppGen}. Collecting estimates, we have now shown that
\begin{equation}\label{Np2Asymp}
N_{p, 2}(x) =
\begin{cases}
\ \displaystyle{O\left(\frac x{p (\log x)^{1-2\sqrt{\beta(1-\beta)}+c}}\right)} & \text{if } \frac 15+\epsilon < \beta < 1-\epsilon,\\ 
 \displaystyle{\frac{\overline C x}{p(\log x)^{(1-3\beta)/2}} \left(1+O\left(\sqrt{\frac{\log_3 x}{\log_2 x}} + \frac{(\log_2 p)^{-1/2}}{(\log p)^{\epsilon_0}} \right)\right)} & \text{if } \frac{3 \log_3 x}{\log_2 x} < \beta < \frac 15-\epsilon.
\end{cases} 
\end{equation}
Since $N_p(x) = N_{p, 1}(x) + N_{p, 2}(x)$, \eqref{Np1Asymp} and \eqref{Np2Asymp} together complete the proof of Theorem \ref{thm:main} in the cases $\frac{3 \log_3 x}{\log_2 x} < \beta < 1/5-\epsilon$ and $1/5+\epsilon < \beta < 1-\epsilon$.

\medskip 

It remains to consider the case $\beta \le 3 \log_3 x/\log_2 x$, that is, $p \le \exp((\log_2 x)^3)$. In this case, an application of Theorem \ref{thm:Alladi2} to \eqref{NpFirstExpr} yields\footnote{Here it is important that our choice of $K$ was less than $2$.} 

\begin{equation}\label{eq:NpExpr1_SmallBeta}
N_p(x) = \frac x{p\log x} \left(1 + O\left(\frac{(\log_2 p)^2}{\log_2 x}\right)\right) \sum_{k \le K \log_2 x} \frac{g(p, \mu)}{\Gamma(1+\mu)} \frac{(\log_2 x)^{k-1}}{(k-1)!} \sum_{\substack{P^+(A) \le p \\ \Omega(A) = k}} \frac1A
\end{equation}
where we have noted, as before, that $\log(x/Ap) = \log x (1+O(\log_2 x \log p/\log x))$. Proceeding as in the case $3\log_3 x/\log_2 x < \beta < 1/5-\epsilon$, we see that the contribution of $k \le \frac12 \log_2 x - 3\sqrt{\log_2 x \log_3 x}$ and $k \ge \frac12 \log_2 x + 3\sqrt{\log_2 x \log_3 x}$ to \eqref{eq:NpExpr1_SmallBeta} is $\ll x/p(\log x)^{1/2-2\beta}(\log_2 x)^8$ which is absorbed in the error term since $(\log x)^{\beta/2} < (\log x)^{3\log_3 x/2\log_2 x} = (\log_2 x)^{3/2} \ll (\log_2 x)^4$. Finally, the contribution of the remaining $k$ is, by Theorem \ref{thm:ImprExtnLicht1}, equal to
\begin{equation*}
\begin{split}
&\begin{multlined}[t]
 \frac{\eta_o(2)}2 \frac{x\log^2 p}{p\log x} \left(1+O\left(\sqrt{\frac{\log_3 x}{\log_2 x}} + \frac1{(\log p)^{\epsilon_0} \sqrt{\log_2 p}}\right)\right)\\ \hspace{2cm} \times \sum_{\frac12 \log_2 x - 3\sqrt{\log_2 x \log_3 x} < k < \frac12 \log_2 x + 3\sqrt{\log_2 x \log_3 x}} \frac{g(p, \mu)}{\Gamma(1+\mu)} \frac{(\frac12 \log_2 x)^{k-1}}{(k-1)!}    
\end{multlined}\\
&=\begin{multlined}
 \frac{\eta_o(2) g(p, 1/2)}{2\Gamma(3/2)} \frac x{p(\log x)^{1/2-2\beta}} \left(1+O\left(\sqrt{\frac{\log_3 x}{\log_2 x}} + \frac1{(\log p)^{\epsilon_0} \sqrt{\log_2 p}}\right)\right) 
\end{multlined}
\end{split} 
\end{equation*}
by a final application of Lemma \ref{lem:NortonAppGen} and the observation that $\mu = 1/2 + O(\sqrt{\log_3 x/\log_2 x})$. By Mertens' Theorem, we have $g(p, 1/2) = e^{-\gamma/2}(\log p)^{-1/2} (1+O(1/\log p))$, completing the proof of Theorem \ref{thm:main}. 
\qedhere

\subsection{The middle prime factor when $\Omega(n)$ is even}

Recall that for those integers having an even number of prime factors, we define the middle prime factor to be the smaller of the two possible choices.  As such, %
in order to modify the proof of Theorem \ref{thm:main} to handle $\underline{M}_p(x)$, we note that if $\Omega(n)=2k$, and $P^{(\frac 12)}(n)=p$ then we may write $n=ApB$ where $p = p_k(n)$, $\Omega(A)=k-1$, $\Omega(B)=k$ and $P^+(A) \leq p \leq P^-(B)$. As such, the same arguments in Theorem \ref{thm:main} go through only by changing all the conditions $\Omega(A)=k$ to $\Omega(A)=k-1$. The only notable effect of this change in our arguments is that in the case $\beta>3\log_3 x/\log_2 x$, the expression for $N_{p, 1}(x)$ has $(\log_2 p)^{k-1}/(k-1)!$ in place of $(\log_2 p)^k/k!$, and so by a change of variable $k \mapsto k-1$, we obtain the following analogue of \eqref{eq:Np1Expr1}: 
\begin{multline*}
N_{p, 1}(x) = \frac x{\pi^{1/2} p \log x} \left(1 + O\left(\frac1{\sqrt{\log_2 x}} + \frac 1{\log_2 p}\right)\right)\\ \times \sum_{k \le (2-\delta) \log_2 p - 1} \eta\left(\frac k{\log_2 p}\right) \frac{k^{-1/2} e^{-\gamma \xi}}{\Gamma(1+\xi)} \frac{(2w)^{2k}}{(2k)!} \left(1+O\left(\frac1k\right)\right).
\end{multline*}
Here the contributions of $k \le \min\{w-w', (2-\delta)\log_2 p-1\}$ and of $k \in (w+w', (2-\delta)\log_2 p-1]$ can be bounded as before; starting with the trivial bound $k^{-1/2} \ll 1$, we see that this contribution is\footnote{In order to obtain the correct power saving in $\log_2 x$ for even $\Omega(n)$, it is important to truncate the sum on $k$ to $[w-a\sqrt{w\log w}, w+a\sqrt{w\log w}]$ for some fixed $a \ge 2$. Our choice $a\coloneqq3$ when defining $w'$ was convenient, but larger $a$ would work just as well.} 
\[\ll \frac1{(\log_2 x)^4 (\log_3 x)^5} \ \frac x{p(\log x)^{1-2\sqrt{\beta(1-\beta)}} \sqrt{\log_2 x}}.\] 
All other sums are handled exactly as in %
the proof of Theorem \ref{thm:main}, and the final result of having $\Omega(A)=k-1$ in place of $\Omega(A)=k$ is that our constants $\overline C_\beta$ and $\overline C$ (that had arisen for the exact middle prime factor) are multiplied by $\sqrt{(1-\beta)/\beta}$ and $2$ in the cases $\beta \in (1/5+\epsilon, 1-\epsilon)$ and $\beta \in (0, 1/5-\epsilon)$ respectively. 

\section{The $\alpha$-positioned prime factor} \label{sec:Alpha}

We now generalize Theorem \ref{thm:middle}, giving a proof of the more general Theorem \ref{thm:AlphaModified}.  
The proof closely follows the proof of Theorem \ref{thm:main}, and so in many places we only describe the extra ideas necessary in the more general case and elaborate only on the differences with the arguments of Theorem \ref{thm:main}. %
\begin{proof}[Proof of Theorem \ref{thm:AlphaModified}]
As before, we assume throughout that $\beta<1-\epsilon$. %
This time at the outset, we distinguish between the cases $\beta>3 \log_3 x/\log_2 x$ and $\beta \le 3 \log_3 x/\log_2 x$, considering first the former. With $K_0\coloneqq 2.04/\log 2$, Lemma \ref{lem:TEN} shows that the $n \le x$ with $p|n$ having $\Omega(n) \le k_\alpha \coloneqq \max\{2/\alpha, 2/(1-\alpha)\}$ or $\Omega(n) > K_0 \log_2 x$ give a negligible contribution to our count; hence it suffices to establish the claimed formulae for the number $\Npax$ of $n \le x$ with $\Omega(n) \in (k_\alpha, K_0 \log_2 x]$ and $P^{(\alpha)}(n)=p$. We factor each such $n$ with $\Omega(n)=k \in (k_\alpha, K_0 \log_2 x]$, uniquely as $n= ApB$ with $P^+(A) \le p \le P^-(B)$, $\Omega(A) = \ceilalphak-1$ and $\Omega(B) = k-\ceilalphak = \floorOmalphak$, and use Theorem \ref{thm:Alladi} to estimate the count of $B$'s given $A$. This yields 
\begin{equation}\label{eq:postAlladi1}
\Npax = \left(1+O\left(\frac1{\sqrt{\log_2 x}}\right)\right) \frac x{p\log x} \sum_{k_\alpha < k \le K_0 \log_2 x} \freGamxi \frac{(\log u)^{\floorOmalphak-1}}{(\floorOmalphak-1)!} \!\sum_{\substack{P^+(A) \le p\\\Omega(A) = \ceilalphak-1}} \! \frac1A   
\end{equation}
with $\xi = \floorOmalphak/(\log u-\gamma)$. As before, we write $\Npax = \Npaox + \Npatx$, 
where $\Npaox$ is defined by restricting the above sum to $k_\alpha < k \le \TDelAlp$ for some $\delta>0$ which will be fixed to be small enough in terms of $\epsilon$. 

\textit{Estimation of $\Npaox$}: By Theorem \ref{thm:lichtman}, 
\begin{multline}\label{eq:Np1Licht}
\Npaox = \left(1+O\left(\frac1{\sqrt{\log_2 x}} + \frac1{\log_2 p}\right)\right) \frac x{p\log x \log u \log_2 p}\\ \times \sum_{k_\alpha < k \le \TDelAlp} \eta\left(\frac{\ceilalphak-1}{\log_2 p}\right) \freGamxi \ceilalphak \floorOmalphak \binom k{\ceilalphak} \frac{(\log u)^{\floorOmalphak} (\log_2 p)^{\ceilalphak}}{k!}.
\end{multline}
Now since $k \ge k_\alpha$, we see that $0 \le \frac{\ceilalphak - \alpha k}{\alpha k} \le \frac 12$. Consequently, 
\begin{align*}
    \ceilalphak \log \ceilalphak &- \ceilalphak \log(\alpha k) \\&= \alpha k \left(1+\frac{\ceilalphak - \alpha k}{\alpha k}\right) \log \left(1+\frac{\ceilalphak - \alpha k}{\alpha k}\right) = \ceilalphak - \alpha k + O\left(\frac1k\right)
\end{align*}
and likewise \[\floorOmalphak \log \floorOmalphak - \floorOmalphak \log((1-\alpha) k) = \floorOmalphak - (1-\alpha)k + O\left(\frac 1k\right),\] so that, by Stirling's estimate, 
$$\binom k{\ceilalphak} = \frac{k^k \left(2 \pi \alpha (1-\alpha) k\right)^{-1/2}\OOk}{\ceilalphak^{\ceilalphak} \floorOmalphak^{\floorOmalphak}}  = \frac{\left(2 \pi \alpha (1-\alpha) k\right)^{-1/2}}{\alpha^{\ceilalphak} (1-\alpha)^{\floorOmalphak}} \OOk.$$
Consequently, \eqref{eq:Np1Licht} yields
\begin{multline}\label{eq:Np1PostBigStir}
\Npaox = \left(1+O\left(\frac1{\sqrt{\log_2 x}} + \frac1{\log_2 p}\right)\right) \frac1{\sqrt{2\pi \alpha (1-\alpha)}} \frac x{p\log x \log u \log_2 p}\\ \sum_{k_\alpha < k \le \TDelAlp} \eta\left(\frac{\ceilalphak-1}{\log_2 p}\right) \freGamxi \frac{\ceilalphak \floorOmalphak}{k^{1/2}} \frac{w^k}{k!} \chiexpfr \OOk    
\end{multline}
where we set $w \coloneqq Q_{\alpha,\beta} \log_2 x$ with $Q_{\alpha, \beta} \coloneqq \alphbetcom$. Note $w \gg \beta^\alpha \log_2 x \gg \log_2 p$.

Letting $E_w \coloneqq 6\sqrt{w \log w}$, we see that the total contribution of the terms $k \le \min\Big\{w-E_w,$ $\TDelAlp \Big\}$ to the expression in \eqref{eq:Np1PostBigStir} is, by Lemma \ref{lem:NortonAppGen},
\begin{equation}\label{eq:Np1Outsidemain}
\ll \frac{x\sqrt{\log_2 p}}{p\log x \log u} \sum_{k<w-E_w} \frac{w^k}{k!} \ll \frac{x\sqrt{\log_2 p}}{p(\log x)^{1-Q_{\alpha, \beta}} \log_2 x} \frac1{(\log_2 p)^{18}}   
\end{equation}
which is negligible in comparison to all the claimed error terms as $1-Q_{\alpha, \beta} \ge 1-2\beta-\nu^\alpha(1-\beta)$ for all $\alpha, \beta \in (0, 1)$.\footnote{This follows from the observation that for each value of $\alpha \in (0, 1)$, the function $H_{\alpha}(\beta) \coloneqq 2\beta+\nu^\alpha(1-\beta) - \alphbetcom \ge 0$ for all $\beta \in (0, 1)$. Indeed for all such $\beta$, $\frac{\partial^2 H_\alpha}{\partial \beta^2}>0$, so $H_{\alpha}(\beta)$ is convex on $(0, 1)$ and has a unique minimum in $(0, 1)$. Since $\frac{\partial H_\alpha}{\partial \beta} \big\vert_{\beta=\betalp} = 0 = H_{\alpha}(\betalp)$, it follows that $H_{\alpha}(\beta) \ge H_{\alpha}(\betalp) = 0$.} In particular, for $\beta < \betalp-\epsilon$, this shows that $\Npaox$ is absorbed in the error term. On the other hand, in the case $\beta \in (\betalpPl, 1-\epsilon)$, the contribution of $k \in (w+E_w, \TDelAlp]$ is bounded by the same expression as in \eqref{eq:Np1Outsidemain}, and thus is also negligible. Finally, in the same range of $\beta$ and for any $k \in [w-E_w, w+E_w]$, we have $k=w+O(E_w)$; hence, calculations analogous to those carried out for the exact middle prime factor reveal that the contribution of all such $k$ to the sum in \eqref{eq:Np1PostBigStir} is 
\begin{equation}\label{eq:Np1Main}
\left(1+O\left(\sqrt{\frac{\log_3 x}{\log_2 x}}\right)\right) \frac{C_{\beta, \alpha}}{\rho_{\chi, \alpha}} \frac x{p \log x \sqrt{\log_2 x}} \sum_{w-E_w \le k \le w+E_w} \frac{w^k}{k!} \chiexpfr.
\end{equation}
The corresponding assertions of Lemma \ref{lem:beattyModified} now show that in the case $\beta \in (\betalpPl, 1-\epsilon)$, $\Npaox$ satisfies the claimed asymptotic formulae for $\Npax$. The same estimate for $\Npax$ holds true uniformly for $\beta \in (\epsilon, 1-\epsilon)$ and $\alpha \in (\beta - \err, \beta+\err)$, in which case, for all sufficiently large $x$, we have $\alpha > \epsilon/2$ and $\beta>\beta_\alpha+\epsilon_1/2$ for some $\epsilon_1 >0$ depending only on $\epsilon$. Moreover in this case, we see that $C_{\beta, \alpha}/\rho_{\chi, \alpha} = C_{\beta, \beta} (1+O(\err))$, since $C_{\beta, \alpha} = \rho_{\chi, \alpha} H(\alpha, \beta)$ for some function $H(\alpha, \beta)$ differentiable on the compact set $[\epsilon/2, 1-\epsilon/2] \times [\epsilon, 1-\epsilon]$. %

\textit{Estimation of $\Npatx$}: To finish off the case $\beta>3\log_3 x/\log_2 x$, it remains to show that $\Npatx$ is negligible for $\beta > \betalpPl$ and that it satisfies the claimed asymptotic formulae for $\Mpax$ when $\beta \in (3\log_3 x/\log_2 x, \betalp-\epsilon)$. Indeed, for $\beta > \betalpPl$, \eqref{eq:coarsebd} shows that
\begin{equation*}
\begin{split}
\Npatx &\ll \frac {x \log^2 p}{p \log x} \sum_{\TDelAlp < k \le K_0 \log_2 x} \frac k{2^{\alpha k}} \cdot \frac{(\log u)^{\floorOmalphak-1}}{(\floorOmalphak-1)!}\\
&\ll \frac {x \log_2 x}{p (\log x)^{1-2\beta}} \sum_{\TDelAlp < k \le K_0 \log_2 x} \frac 1{2^{\alpha k}} \cdot \frac{(\log u)^{\floorOmalphak+1}}{(\floorOmalphak+1)!}\\
&\ll \frac {x \log_2 x}{p (\log x)^{1-2\beta}} \sum_{m>\TDelPar(1-\alpha)\log_2p} \frac{v^m}{m!}   
\end{split}
\end{equation*}
where we have defined $v\coloneqq \nu^\alpha \log u = \nu^\alpha (1-\beta) \log_2 x$ and set $m = \floorOmalphak + 1$ in the last equality above. (Here it is important that there are $\le 1/(1-\alpha) \ll 1$ many values of $k$ giving rise to a value of $m$.) Considering $\theta \in (0, 1)$ satisfying $(1+\theta)v \coloneqq \TDelPar(1-\alpha)\log_2p$, an application of \eqref{eq:Norton47} reveals (by a calculation analogous to \eqref{eq:Np2LargeBeta}) that $\Npatx \ll x\sqrt{\log_2 x} /p (\log x)^{1-Q_{\alpha, \beta}+F(\alpha, \beta, \delta)}$, where 
\begin{multline*}
F(\alpha, \beta, \delta) \coloneqq \alphbetcom - 2\beta \\- (2-\delta)\beta \left(\frac1\alpha-1\right) \left\{1-\log\left(\left(1-\frac\delta2\right)\frac{2^{1/(1-\alpha)}(1/\alpha-1)}{1/\beta-1}\right)\right\}.    
\end{multline*}
As such, it suffices to show that for any fixed $\epsilon_1>0$, there exists $\epsilon_2>0$ (depending at most on $\epsilon$ and $\epsilon_1$) such that $\displaystyle{G(\delta) \coloneqq \inf\limits_{(\alpha, \beta) \in [\epsilon_1, 1-\epsilon_1] \times [\betalpPl, 1-\epsilon]} F(\alpha, \beta, \delta)}$ $> 2\epsilon_2$ for all $\delta \ll_{\epsilon, \epsilon_1} 1$.\footnote{Notice that this includes both the cases of fixed and varying $\alpha$ considered in Theorem \ref{thm:AlphaModified}.} But since $F$ is continuous on $[\epsilon_1, 1-\epsilon_1] \times [\betalpPl, 1-\epsilon] \times [0, 1]$, so is $G$, and it suffices to show that $G(0)>0$ or (by compactness) that $F(\alpha, \beta, 0)>0$ for each $(\alpha, \beta) \in [\epsilon_1, 1-\epsilon_1] \times [\betalpPl, 1-\epsilon]$. This in turn follows from an analysis of the first two partial derivatives of $F$ with respect to $\beta$ on the interval $(\betalp, 1)$.\footnote{Indeed, it is easy to see that for each value of $\alpha \in (0, 1)$, we have $\frac{\partial^2 F}{\partial \beta^2}>0$ for all $\beta \in (\betalp, 1)$. Hence, $\frac{\partial F}{\partial \beta}$ an increasing function of $\beta$ on $(\betalp, 1)$. Since $\frac{\partial F}{\partial \beta}\big\vert_{\beta=\beta_\alpha} = 0$, it follows that $F(\alpha, \beta, 0)$ itself is an increasing function of $\beta$ on $(\betalp, 1)$. In particular, we have $F(\alpha, \beta, 0) > F(\alpha, \betalp, 0) = 0$, as desired.}  

Coming to the case $\beta \in (3\log_3 x/\log_2 x, \betalpM)$, a straightforward adaptation of the prior computations (invoking Lemmas \ref{lem:coarsebd} and \ref{lem:NortonAppGen}) shows, with $E_v \coloneqq 6\sqrt{v \log v}$, the contribution of $k \in \left(\TDelAlp,  \frac{v-E_v}{1-\alpha}\right) \cup \left(\frac{v+E_v}{1-\alpha}, K_0 \log_2 x\right]$ to $\Npatx$ is $\ll x/p (\log x)^{\SmallBetaExp} (\log_2 x)^{17}$, which is negligible in comparison to the claimed error (here we have noted that $v \asymp \log_2 x$). On the other hand, since $\beta<\betalpM$, we have $\frac{v-E_v}{1-\alpha} > \left(\frac{2+\epsilon_1}\alpha\right) \log_2 p$ for any %
$\epsilon_1 \ll_\epsilon 1$. Consequently, \eqref{eq:ImpExtLich1} shows that the contribution to $\Npatx$ from $k \in \left[ \frac{v-E_v}{1-\alpha}, \frac{v+E_v}{1-\alpha}\right]$ is
\begin{equation}\label{Np2Main}
\left(1+O\left(\sqrt{\frac{\log_3 x}{\log_2 x}} + \frac{(\log_2 p)^{-1/2}}{(\log p)^{\epsilon^2}}\right)\right) \frac{2\eta_o(2) \nu^\alpha \exp(-\gamma \nu^\alpha)}{\Gamma(1+\nu^\alpha)} \frac x{p(\log x)^{1-2\beta}} \cdot S(v), %
\end{equation}
where the sum $S(v)\coloneqq \sum_{\frac{v-E_v}{1-\alpha} \le k \le \frac{v+E_v}{1-\alpha}} \frac{v^{\floorOmalphak}}{\floorOmalphak!} \nuexpfr$ is estimated by the corresponding assertions of Lemma \ref{lem:beattyModified}. This completes the proof of the theorem for $\beta>3 \log_3 x/\log_2 x$.

It remains to consider the case $\beta \le 3\log_3 x/\log_2 x$. This time we start by fixing $\epsilon_1, \epsilon_2 \in (0, 1/2)$ which satisfy $(2-\epsilon_1)/(1-\alpha) > 2+\epsilon_2$,\footnote{It is clear that such $\epsilon_1, \epsilon_2>0$ can be fixed only in terms of $\alpha$ if $\alpha$ itself is fixed, or only in terms of $\epsilon$ if $\alpha \in (\epsilon/2, 1-\epsilon/2)$.} and removing the $n \le x$ divisible by $p$ which have $\Omega(n) > K_\alpha \log_2 x$ with $K_\alpha \coloneqq (2-\epsilon_1)/(1-\alpha)$; the number of such $n$ is $\ll x/p(\log x)^{K_\alpha \log 2 - 1}$, which is negligible in comparison to the error terms claimed for $\beta<\beta_\alpha - \epsilon$ (here, the constraint $\epsilon_1<1/2$ ensures that $K_\alpha \log 2 - 1 -(1-2\beta-2\nu(1-\beta)) > \epsilon'$ for some $\epsilon' \ll_\epsilon 1$). 
Hence, it suffices to show the claimed asymptotics for the number $\Npatilx$ of $n \le x$ having $\Omega(n) \le K_\alpha \log_2 x$ and $P^{(\alpha)}(n)=p$. Writing each such $n$ as $ApB$ exactly as before, Theorem \ref{thm:Alladi2} allows us to estimate the number of $B$ given $A$ (in order to apply the theorem, it is crucial that $(1-\alpha) K_\alpha$ is less than and bounded away from $2$). We deduce that
\begin{equation}\label{eq:Nptil_PostAlladi2}
\Npatilx = \left(1+O\left(\frac{(\log_2 p)^2}{\log_2 x}\right)\right) \frac x{p\log x} \sum_{k \le K_\alpha \log_2 x} \frac{g(p, \mu)}{\Gamma(1+\mu)} \frac{(\log_2 x)^{\floorOmalphak-1}}{(\floorOmalphak-1)!} \sum_{\substack{P^+(A)\le p\\ \Omega(A) = \ceilalphak-1}} \frac1A    
\end{equation}
with $\mu \coloneqq (\floorOmalphak-1)/\log_2 x$. Setting $v' \coloneqq \nu^\alpha \log_2 x \asymp \log_2 x$ and $E' \coloneqq 6\sqrt{v' \log v'}$, analogous calculations as before show that the $k < (v'-E')/(1-\alpha)$ and $k>(v'+E')/(1-\alpha)$ give a contribution $\ll x/p(\log x)^{1-2\beta-\nu^\alpha} (\log_2 x)^{17}$ to the sum in \eqref{eq:Nptil_PostAlladi2}, which is absorbed in the error terms since $(\log x)^{\nu^\alpha \beta} < (\log x)^{3 \log_3 x/\log_2 x} = (\log_2 x)^3 \ll (\log_2 x)^5$. Finally, since $\beta=o(1)$, we have $\alpha k > (2+\epsilon_1)\log_2 p$ for all $k \in \left[\frac{v'-E'}{1-\alpha}, \frac {v'+E'}{1-\alpha}\right]$. An application of Theorem \ref{thm:ImprExtnLicht1} reveals that the contribution of such $k$ to the sum \eqref{eq:Nptil_PostAlladi2} is
\begin{equation*}
= \left(1+O\left(\sqrt{\frac{\log_3 x}{\log_2 x}} + \frac{(\log_2 p)^{-1/2}}{(\log p)^{\epsilon^2}}\right)\right) \frac{2 \eta_o(2) \nu^\alpha g(p, \nu^\alpha)}{\Gamma(1+\nu^\alpha)} \frac x{p(\log x)^{1-2\beta}} \cdot S(v'),
\end{equation*}
where the sum $S(v')\coloneqq \sum\limits_{\frac{v'-E'}{1-\alpha} \le k \le \frac {v'+E'}{1-\alpha}} \frac{v'^{\floorOmalphak}}{\floorOmalphak !} \nuexpfr$ %
is estimated by Lemma \ref{lem:beattyModified}. This concludes the proof of Theorem \ref{thm:AlphaModified}, upon noting that $g(p, \nu^\alpha) = \frac{\exp(-\gamma \nu^\alpha)}{(\log p)^{\nu^\alpha}}$ $\left(1+O\left(\frac1{\log p}\right)\right)$.
\end{proof}

\section{Proof of Theorem \ref{thm:MpaBound}}

We claim the following bounds, which together imply the assertion of the theorem.
\begin{enumerate}
\item[(i)] Fix $\delta \in (0, \epsilon)$. %
We have uniformly for $\beta \in (\epsilon, 1-\epsilon)$ and $\alpha \in (0, \epsilon-\delta) \cup (1-\epsilon + \delta, 1)$,
\begin{equation}\label{eq:MpaSmallLargeAlpha}
\Mpax \ll \frac x{p(\log x)^c}    
\end{equation}
for some constant $c = c(\epsilon, \delta)>0$.\footnote{In the rest of this section, we shall write $C(\epsilon, \delta)$ to mean a constant $C$ depending on $\epsilon$ and $\delta$.}  

\item[(ii)] Fix $\delta, \delta_0 > 0$ satisfying%
$$0 < \delta \le \frac12\left(1-\betaepse-\frac1{2-2\nuepse}\right) < \epsilon \text{  and  }0< \delta_0 \le 1-2\nuepse - (\betaepse+\delta)(2-2\nuepse),$$
where $\nuepse \coloneqq 2^{-\frac1{1-\frac\epsilon8}} = 2^{-\frac8{8-\epsilon}}$. Then we have, uniformly for $\alpha \in \left(\epsff, 1-\epsff\right)$, 
\begin{align} \label{eq:MidAlp}
\Mpax \ll 
\begin{cases}
\displaystyle{\frac x{p(\log x)^{1-\alphbetcom} \sqrt{\log_2 x}}} & \text{ uniformly in } \beta \in \left(\betalp+\delta, 1-\delta\right),\\
\vspace{-2mm}\\
\displaystyle{\frac{x\log_2 x}{p (\log x)^{\delta_0}}} & \text{ uniformly in }\beta \in (\delta, \betalp+\delta].
\end{cases}
\end{align}
\end{enumerate}
Theorem \ref{thm:MpaBound} follows by invoking (i) for $\alpha \in (0, \frac {\epsilon}3) \cup (1-\frac {\epsilon}3, 1)$ along with (ii) for $\alpha \in (\frac {\epsilon}4, 1-\frac {\epsilon}4)$. Hence, it remains to prove (i) and (ii).

To show claim (i), we first remove all $n$ which either have $\Omega(n) \le (1-\delta/4)\log_2 x$ or $\Omega(n) \ge (1+\delta/4) \log_2 x$, noting that the number of such $n$ is $\ll x/p(\log x)^{c_1}$ for some constant $c_1 = c_1(\delta) > 0$. Indeed, with $E_0$ denoting the set of all primes, Mertens' second theorem shows that $E_0(x/p) = \sum_{\ell \le x/p} 1/\ell = \log_2 x + C_0 + o(1)$ for some absolute constant $C_0>0$. As such, any $n$ with $\Omega(n) \le (1-\delta/4)\log_2 x$ or $\Omega(n) \ge (1+\delta/4) \log_2 x$ can be written as $n= mp$ for some $m \le x/p$ either having $\Omega(m) \le (1-\delta/4)E_0(x/p)$ or having $\Omega(m) \ge (1+\delta/5)E_0(x/p)$, hence Lemma \ref{lem:HT} shows that 
$$\sum_{\substack{n \le x:p|n\\ \Omega(n) \le (1-\delta/4)\log_2 x}} 1 + \sum_{\substack{n \le x:p|n\\ \Omega(n) \ge (1+\delta/4)\log_2 x}} 1 \le \sum_{\substack{m \le x/p\\ \Omega(m) \le (1-\delta/4)E_0(x/p)}} 1 + \sum_{\substack{m \le x/p\\ \Omega(m) \ge (1+\delta/5)E_0(x/p)}} 1 \ll \frac x{p(\log x)^{c_1}}.$$
It thus remains to show that \eqref{eq:MpaSmallLargeAlpha} holds true for the count of $n \le x$ having $P^{(\alpha)}(n) = p$ and $(1-\delta/4)\log_2 x < \Omega(n) < (1+\delta/4)\log_2 x$; in the rest of the proof of claim (i), we only consider such $n$.

Suppose first that $\alpha \in (0, \epsilon-\delta)$. Then, since $p > \exp((\log x)^\epsilon)$, any such $n$ has at least $\lfloor (1-\alpha) \Omega(n) \rfloor + 1 > (1-\alpha) \Omega(n) > (1-\epsilon + \delta) (1-\delta/4)\log_2 x > (1-\epsilon + \delta) (1-\delta/2)\log_2 x + 1$ many prime divisors (counted with multiplicity) greater than $\explgxeps$. Hence, any such $n$ can be written as $n=mp$ for some $m \le x/p$ having $\Omega_E(m) > (1-\epsilon + \delta) (1-\delta/2)\log_2 x$, where $E$ denotes the set of primes exceeding $\explgxeps$. Since $E(x/p) = \sum_{\explgxeps < \ell \le x/p} 1/\ell = (1-\epsilon)\log_2 x + o(1) < (1-\epsilon + \delta/4) \log_2 x$, we obtain, by defining $\mu_\epsilon \coloneqq \frac{(1-\epsilon + \delta) (1-\delta/2)}{1-\epsilon + \delta/4} > 1$ and again applying Lemma \ref{lem:HT},
$$\sum_{\substack{n \le x: P^{(\alpha)}(n) = p\\\Omega(n) > (1-\delta/4)\log_2 x}} 1 \ \le \ \sum_{\substack{m \le x/p\\ \Omega_E(m) > \mu_\epsilon E(x/p)}} 1 \ \ll \  \frac x{p(\log x)^{c_1}}$$
for some constant $c_1 = c_1(\epsilon, \delta)>0$. This shows claim (i) for all $\alpha \in (0, \epsilon-\delta)$. 

Likewise for $\alpha \in (1-\epsilon + \delta, 1)$, since $p < \explgxoeps$, any $n$ with $\Omega(n) < (1+\delta/4) \log_2 x$ that is counted in $\Mpax$ has at most $\lfloor (1-\alpha) \Omega(n) \rfloor < (\epsilon-\delta) (1+\delta/4)\log_2 x$ many prime divisors (counting multiplicity) greater than $\explgxoeps$. Denoting by $E'$ the set of such primes, we see that $E'(x/p)>(\epsilon-\delta/4)\log_2 x$. Consequently, with $\nu_\epsilon \coloneqq \frac{(\epsilon-\delta)(1+\delta/4)}{\epsilon-\delta/4} \in (0, 1)$, Lemma \ref{lem:HT} yields
$$\sum_{\substack{n \le x: P^{(\alpha)}(n) = p\\\Omega(n) < (1+\delta/4)\log_2 x}} 1 \le \sum_{\substack{m \le x/p\\ \Omega_{E'}(m) < \nu_\epsilon E'(x/p)}} 1 \ \ll \  \frac x{p(\log x)^{c_2}}$$
for some constant $c_2 = c_2(\epsilon, \delta)>0$. This completes the proof of claim (i).

We now establish claim (ii) by closely following the proof of Theorem \ref{thm:AlphaModified}. To begin, defining $K_0\coloneqq 2.04/\log 2$ and $k_\epsilon \coloneqq 8/\epsilon = \max\{8/\epsilon, 8/(4-\epsilon)\}$, Lemma \ref{lem:TEN} again shows that the contribution of $n$ with $\Omega(n) \le k_\epsilon$ or $\Omega(n) > K_0\log_2 x$ are both negligible, making it sufficient to show the claim with $\Mpax$ replaced by the count $\Npax$ of $n \le x$ having $P^{(\alpha)}(n) = p$ and $\Omega(n) \in (k_\epsilon, K_0\log_2 x]$. Since $\alpha \in (\frac{\epsilon}4, 1-\frac{\epsilon}4)$, proceeding as in Theorem \ref{thm:AlphaModified}, we obtain 
\begin{equation*}%
\Npax = \left(1+O\left(\frac1{\sqrt{\log_2 x}}\right)\right) \frac x{p\log x} \sum_{k_\epsilon < k \le K_0 \log_2 x} \freGamxi \frac{(\log u)^{\floorOmalphak-1}}{(\floorOmalphak-1)!} \!\! \sum_{\substack{P^+(A) \le p\\\Omega(A) = \ceilalphak-1}} \! \frac1A.
\end{equation*}
Bounding the sum on $A$ by Lemma \ref{lem:coarsebd} and proceeding as before, we see that, uniformly for $\alpha \in (\frac {\epsilon}4, 1-\frac{\epsilon}4)$ and $\beta \in (\epsilon, 1-\epsilon)$, we have 
\begin{equation*}
\begin{split}
\Npax &\ll \frac x{p\log x} \sum_{k_\epsilon < k \le K_0 \log_2 x} \frac k{2^{\alpha k}} \log^2 p \cdot \frac{(\log u)^{\floorOmalphak-1}}{(\floorOmalphak-1)!}\\
&\ll \frac{x\log_2 x}{p(\log x)^{1-2\beta}} \sum_{k_\epsilon < k \le K_0 \log_2 x} \frac 1{2^{\alpha k}} \cdot \frac{(\log u)^{\floorOmalphak}}{\floorOmalphak!}\\
&\ll \frac{x\log_2 x}{p(\log x)^{1-2\beta}} \sum_{m \ge 1} \frac{(2\nu_\alpha \log u)^m}{m!} \ll \frac{x \log_2 x}{p(\log x)^{1-2\beta-2\nu_\alpha(1-\beta)}},
\end{split}
\end{equation*}
where $\nu_\alpha \coloneqq 2^{-1/(1-\alpha)}$ and we have set $m \coloneqq \floorOmalphak$, noting that there are $\le 1/(1-\alpha) \ll_\epsilon 1$ many possible values of $k$ corresponding to a given value of $m$. 
Now with $\delta$ chosen as in the statement of the claim, we see that $\delta<\frac{\epsilon}8 < \frac 1{16}$, so that the function $\alpha \mapsto 1-2\nu_\alpha - (\beta_\alpha+\delta)(2-2\nu_\alpha)$ is monotonically increasing on $(\frac{\epsilon}8, 1-\frac{\epsilon}8)$.\footnote{In fact, the function $\alpha \mapsto 1-2\nu_\alpha - (\beta_\alpha+\delta)(2-2\nu_\alpha)$ is increasing on $(0, 1)$, for each fixed $\delta \in (0, 1/16)$.} As such, for all $\beta \le \betalp + \delta$, the exponent of $\log x$ in the above display is $1-2\nu_\alpha - \beta(2-2\nu_\alpha) \ge 1-2\nu_\alpha - (\beta_\alpha+\delta)(2-2\nu_\alpha) > 1-2\nu_{\epse} - (\beta_{\epse}+\delta)(2-2\nu_{\epse}) \ge \delta_0$, showing the second assertion of claim (ii). 

Finally, in the case $\alpha \in (\frac{\epsilon}4, 1-\frac{\epsilon}4)$, $\beta \in (\betalp + \delta, 1-\delta)$, we can follow the proof of Theorem \ref{thm:AlphaModified} (in the case $\beta>\betalp + \epsilon$) more closely: writing $\Npax = \Npaox + \Npatx$ with the two summands defined analogously, we again see that $\Npatx$ is negligible in comparison to the error term, while the corresponding analogue of \eqref{eq:Np1PostBigStir} holds true for $\Npaox$. However at this point, invoking the trivial bound 
\begin{multline*}
\sum_{k_\alpha < k \le \TDelAlp} \eta\left(\frac{\ceilalphak-1}{\log_2 p}\right) \freGamxi \frac{\ceilalphak \floorOmalphak}{k^{1/2}} \frac{w^k}{k!} \chiexpfr\\ \ll (\log_2 x)^{3/2} \sum_{k \ge 1} \frac{w^k}{k!} \ll e^w (\log_2 x)^{3/2}
\end{multline*}
reveals that 
$$\Npaox \ll \frac x{p(\log x)^{1-\alphbetcom} \sqrt{\log_2 x}},$$
uniformly for $\alpha \in (\epsf, 1-\epsf)$ and $\beta \in (\betalp+\delta, 1-\delta)$. Hence the same bound holds for $\Npax$ as well. This establishes our claims, and completes the proof of the theorem.

\section{Proof of Theorem \ref{thm:RpNormal}}

We start by observing the following simple and useful bound on the tails of the Gaussian integral: for all $X>0$, we have
\begin{equation}\label{eq:GaussianTail}
\int_{-\infty}^{-X} e^{-u^2/2} \, \mathrm du = \int_X^\infty e^{-u^2/2} \, \mathrm du \le \frac1X \int_X^\infty u e^{-u^2/2} \, \mathrm du \ll \frac1Xe^{-X^2/2}.
\end{equation}
We first prove the theorem for $-\sqrt{\log_3 x} \leq t \leq \sqrt{\log_3 x}$. We claim that with 
\[\lambda \coloneqq \beta + \frac t{\sqrt{\log_2 x}},\] 
the left hand side of \eqref{eq:RpNormal} is\footnote{In fact, the arguments in the proof show that this identity holds true uniformly for all real $t$, but we shall not require this.}
\begin{equation}\label{eq:CountIntegral}
\frac px \sum_{\substack{n \le x: \, p|n\\R_p(n)<\lambda}} 1 = \frac{p \log_2 x}x \MpaxInt +   O\left(\frac1{(\log_2 x)^{1/3}}\right). 
\end{equation} 

To this end, we shall make frequent use of the following estimates
\begin{align}
\sum_{\substack{n \le x: \, p|n\\\Omega(n) \le \frac13\log_2 x}} 1 \ll \frac x{p(\log x)^{0.15}}, \label{eq:AtypicalOmega1}\\
\sum_{\substack{n \le x: \, p|n\\|\Omega(n) - \log_2 x| \ge (\log_2 x)^{2/3}}} 1 \ll \frac x{p(\log_2 x)^{1/3}}. %
\label{eq:AtypicalOmega2} 
\end{align}
The estimate \eqref{eq:AtypicalOmega1} is a direct consequence of Lemma \ref{lem:HT} since any $n \le x$ divisible by $p$ having $\Omega(n) \le \frac13\log_2 x$ is of the form $n= mp$ for $m \le x/p$ having $\Omega(m) \le \frac12 \log_2(x/p)$ (for all sufficiently large $x$). The estimates in \eqref{eq:AtypicalOmega2} follow from the Hardy-Ramanujan Theorem written in the form $\sum_{m \le x/p} \big(\Omega(m) - \log_2(x/p)\big)^2 \ll x \log_2 x/p$, since any $n$ counted in those two sums is of the form $mp$ for some $m \le x/p$ satisfying $|\Omega(m) - \log_2(x/p)| \gg (\log_2 x)^{2/3}$.

Turning now to the proof of the estimate \eqref{eq:CountIntegral}, we first note that since the number of $n \le x$ which are divisible by $p^2$ is $O(x/p^2)$, it suffices to show that \eqref{eq:CountIntegral} holds true with $\Mpax$ replaced by the count $\Mpasx$ of $n\le x$ exactly divisible by $p$ (meaning $p|n$ but $p^2  \nmid n$) for which $P^{(\alpha)}(n)=p$. Any such $n$ has $R_p(n) = \lceil \alpha \Omega(n) \rceil/\Omega(n)$, that is, $R_p(n)-1/\Omega(n) < \alpha \le R_p(n)$. Consequently, 
\begin{equation}\label{eq:MpasxIntExpr1}
\begin{split}
\int_0^\lambda \Mpasx \, \mathrm d\alpha &= \sum_{\substack{n \le x: \, p\parallel n\\R_p(n)<\lambda+1/\Omega(n)}} \int_{R_p(n)-1/\Omega(n)}^{\min\{\lambda, R_p(n)\}} \, \mathrm d\alpha\\ &= \sum_{\substack{n \le x: \, p\parallel n\\R_p(n)<\lambda}} \frac1{\Omega(n)} + \sum_{\substack{n \le x: \, p\parallel n\\\lambda \le R_p(n)<\lambda+1/\Omega(n)}} \left(\lambda - R_p(n) + \frac1{\Omega(n)} \right).
\end{split}
\end{equation}
For any $n$ counted in the second sum above, we have $|R_p(n)-\lambda|<1/\Omega(n)$, which shows that this sum is 
\begin{equation}\label{eq:LargeRp}
\begin{split}
\ll \sum_{\substack{n \le x: \, p\parallel n\\\lambda \le R_p(n)<\lambda+1/\Omega(n)}} \frac1{\Omega(n)} \ll \sum_{\substack{n \le x: \, p|n\\\Omega(n) \le \frac13\log_2 x}} 1 + \frac1{\log_2 x}\sum_{\substack{n \le x: \, p\parallel n\\\lambda \le R_p(n)<\lambda+1/\Omega(n)}} 1.
\end{split}
\end{equation}
By \eqref{eq:AtypicalOmega1}, the first of the two sums is $\ll x/p(\log x)^{0.15}$. Any $n$ counted in the second sum has $P^{(\lambda)}(n) = p$ and so, since $|\lambda - \beta| \le \sqrt{\frac{\log_3 x}{\log_2 x}}$, it follows from the final %
assertion of Theorem \ref{thm:AlphaModified} (for $\alpha \in (\beta - \err, \beta + \err)$), or from Theorem \ref{thm:MpaBound}, that $M^{\left(\lambda\right)}_{p}(x) \ll {x}/{p\sqrt{\log_2 x}}.$ %
Hence, the last expression in \eqref{eq:LargeRp} is $\ll x/p(\log_2 x)^{3/2}$, and \eqref{eq:MpasxIntExpr1} yields
\begin{equation}\label{eq:MpasxIntExpr2}
\int_0^\lambda \Mpasx \, \mathrm d\alpha = \sum_{\substack{n \le x: \, p\parallel n\\R_p(n)<\lambda}} \frac1{\Omega(n)} + O\left(\frac x{p(\log_2 x)^{3/2}}\right) = \sum_{\substack{n \le x: \, p\parallel n\\ \Omega(n) \in \mathcal W \\R_p(n)<\lambda}} \frac1{\Omega(n)} + O\left(\frac x{p(\log_2 x)^{4/3}}\right),
\end{equation}
where $\mathcal W \coloneqq (\log_2 x - (\log_2 x)^{2/3}, \log_2 x + (\log_2 x)^{2/3})$ and in the last equality above, we have invoked \eqref{eq:AtypicalOmega1} and \eqref{eq:AtypicalOmega2}. 

Finally, since any $n$ counted in the last sum in \eqref{eq:MpasxIntExpr2} has $\Omega(n) = \log_2 x(1+O((\log_ 2 x)^{-1/3}))$, it follows that the sum is 
\begin{equation*}
\begin{split}
&= \frac1{\log_2 x} \left(1+O\left(\frac1{(\log_ 2 x)^{1/3}}\right)\right) \sum_{\substack{n \le x: \, p\parallel n\\ \Omega(n) \in \mathcal W \\R_p(n)<\lambda}} 1 = \frac1{\log_2 x} %
\sum_{\substack{n \le x: \, p|n\\ R_p(n)<\lambda}} 1 + O\left(\frac x{p(\log_ 2 x)^{4/3}}\right)
\end{split}
\end{equation*}
by carrying out our earlier simplifications in reverse. Consequently, \eqref{eq:MpasxIntExpr2} yields 
$$\int_0^\lambda \Mpasx \, \mathrm d\alpha = \frac1{\log_2 x} \sum_{\substack{n \le x: \, p|n\\ R_p(n)<\lambda}} 1 + O\left(\frac x{p(\log_ 2 x)^{4/3}}\right),$$
establishing our claim \eqref{eq:CountIntegral} uniformly for all $t \in [-\sqrt{\log_3 x}, \sqrt{\log_3 x}]$. Hence in order to complete the proof of the theorem for all $t$ in this range, it suffices to show that 
\begin{equation}\label{eq:RpnIntAssertion}
\int_0^\lambda \Mpax \dalp = \frac x{p \log_2 x}  \left\{\Phi\left(\frac t{\sqrt{\beta(1-\beta)}}\right)+O\left(\frac{(\log_3 x)^{3/2}}{(\log_2 x)^{1/2}}\right)\right\}
\end{equation}
uniformly for all such $t$. 

Now for $t \in [-\sqrt{\log_3 x}, \sqrt{\log_3 x}]$, we have $\beta-\err \leq \lambda \leq \beta+\err$ where $\err \coloneqq \sqrt{\log_3 x/\log_2 x}$. Furthermore, for all $\alpha \in (\beta -\err,\beta+\err)$ we again find ourselves in the final case of Theorem \ref{thm:AlphaModified}, an application of which yields, 
\begin{equation*}
\int_{\beta-\err}^{\lambda} \Mpax \dalp = \left(1 + O\left(\sqrt{\frac{\log_3 x}{\log_2 x}}\right)\right) \frac{C_{\beta, \beta} x}{p \log x \sqrt{\log_2 x}} \int_{\beta-\err}^{\lambda} (\log x)^{\alphbetcom} \dalp .
\end{equation*}
To analyze the integral above, we note that for all $\alpha \eqqcolon \beta+\eta \in [\beta-\err, \beta+\err]$,
\begin{equation*}
\begin{split}
\alpha\log\left(\frac\beta\alpha\right)+(1-\alpha)\log\left(\frac{1-\beta}{1-\alpha}\right) &= -(\beta+\eta)\log\left(1+\frac\eta\beta\right)-(1-\beta-\eta)\log\left(1-\frac{\eta}{1-\beta}\right)\\
&= -\frac{\eta^2}{2\beta(1-\beta)} + O(\eta^3),
\end{split}
\end{equation*}
so that $\alphbetcom = 1-\frac{\eta^2}{2\beta(1-\beta)} + O(\eta^3)$, leading to
\begin{equation}\label{eq:logpowerEstim}
(\log x)^{\alphbetcom} = (\log x) \exp\left(-\frac{\eta^2 \log_2 x}{2\beta(1-\beta)}\right) \left(1+O\left(\frac{(\log_3 x)^{3/2}}{(\log_2 x)^{1/2}}\right)\right).
\end{equation}
As such, 
\begin{align*}
\int_{\beta-\err}^{\lambda} \Mpax \dalp &= \left(1 + O\left(\frac{(\log_3 x)^{3/2}}{(\log_2 x)^{1/2}}\right)\right) \frac{C_{\beta, \beta} x}{p \sqrt{\log_2 x}} \int_{-\err}^{\lambda-\beta} \exp\left(-\frac{\eta^2 \log_2 x}{2\beta(1-\beta)}\right) \deta\\
&= \left(1+O\left(\frac{(\log_3 x)^{3/2}}{(\log_2 x)^{1/2}}\right)\right) \frac x{p \log_2 x} \cdot \frac1{\sqrt{2\pi}} \int_{-\sqrt{\frac{\log_3x}{\beta(1-\beta)}}}^{ \frac t{\sqrt{\beta(1-\beta)}}} \exp\left(-\frac{\tau^2}2\right) \, \mathrm d\tau .
\end{align*}
Invoking the bound \eqref{eq:GaussianTail}, we obtain, uniformly for $t \in [-\sqrt{\log_3 x}, \sqrt{\log_3 x}]$,
\begin{equation}\label{eq:MainTermt>=0}
\int_{\beta-\err}^{ \beta+t/\sqrt{\log_2 x}} \Mpax \dalp = \frac x{p \log_2 x}  \left\{\Phi\left(\frac t{\sqrt{\beta(1-\beta)}}\right)+O\left(\frac{(\log_3 x)^{3/2}}{(\log_2 x)^{1/2}}\right)\right\},
\end{equation}
where we have noted that $\exp\left(-\frac{\log_3 x}{2\beta(1-\beta)}\right) \ll \frac1{(\log_2 x)^2}$, since $\beta(1-\beta) \le 1/4$.

We now show that the contribution to the integral in \eqref{eq:RpnIntAssertion} from $\alpha$ outside $[\beta-\err, \beta+t/\sqrt{\log_2 x}]$ is negligible.  To that end, we start by noting that by an argument analogous to the above, we have 
\begin{align}\label{eq:Int01Mpax}
\int_0^1 \Mpax \dalp = \frac x{p\log_2 x} + O\left(\frac x{p(\log_2 x)^2}\right)
\end{align}
uniformly for $\beta \in (\epsilon, 1-\epsilon)$. Indeed, it again suffices to show the above to be true with $\Mpax$ replaced by $\Mpasx$, and by a computation similar to \eqref{eq:MpasxIntExpr1}, we find that
\begin{align*}
\int_0^1 \Mpasx \dalp &= \sum_{\substack{n \le x\\p \parallel n}} \frac1{\Omega(n)} = \sum_{\substack{n \le x\\p | n}} \frac1{\Omega(n)} + O\left(\frac x{p^2}\right) = \sum_{\substack{m \le x/p}} \frac1{\Omega(m)+1} + O\left(\frac x{p^2}\right) \nonumber\\
&= \sum_{\substack{1<m \le x/p}} \frac1{\Omega(m)} + O\left(\sum_{1<\substack{m \le x/p}} \frac1{\Omega(m)^2} + \frac x{p^2}\right) = \frac x{p\log_2 x} + O\left(\frac x{p(\log_2 x)^2}\right)
\end{align*}
where in the last equality, we have invoked Theorems 5 and 14 in \cite{dekoninck72}, in the weak forms $\sum_{1<n \le x} 1/\Omega(n) = x/\log_2 x + O(x/(\log_2 x)^2)$ and $\sum_{1<n \le x} 1/\Omega(n)^2 \ll x/(\log_2 x)^2$ respectively.\footnote{Alternatively, we may replicate the arguments used to handle the sum $\sum_{\substack{n \le x: \, p\parallel n\\R_p(n)<\lambda}} \frac1{\Omega(n)}$ in \eqref{eq:MpasxIntExpr1}; this gives an error term of $O(x/p(\log_2 x)^{4/3})$, which is sufficient for the theorem.}  

Comparing \eqref{eq:MainTermt>=0}  with \eqref{eq:Int01Mpax} and taking $t = \sqrt{\log_3 x}$ we obtain 
\begin{align*}%
\int_0^{\beta-\err} &\Mpax \dalp + \int_{\beta+\err}^1 \Mpax \dalp \nonumber\\
&= \frac {x}{p\log_2 x} \left\{1-\Phi\left(\sqrt{\frac {\log_3 x}{\beta(1-\beta)}}\right)\right\} + O\left(\frac{x(\log_3 x)^{3/2}}{p(\log_2 x)^{3/2}} \right) \ll \frac{x(\log_3 x)^{3/2}}{p(\log_2 x)^{3/2}}, 
\end{align*}
where in the last equality, we have again applied \eqref{eq:GaussianTail}. Combining \eqref{eq:MainTermt>=0} with the bound above for $\int_0^{\beta-\err} \Mpax \dalp $ %
yields the desired relation \eqref{eq:RpnIntAssertion} for all $t \in [-\sqrt{\log_3 x}, \sqrt{\log_3 x}]$, proving the theorem for $t$ in this range.

Now suppose $t\leq -\sqrt{\log_3 x}$. We claim that both sides of \eqref{eq:RpNormal} are absorbed in the error term $O(1/(\log_2 x)^{1/3})$. This is immediate for the right hand side since by \eqref{eq:GaussianTail}, we have
$$\Phi\left(\frac t{\sqrt{\beta(1-\beta)}}\right) \le \Phi\left(-\sqrt{\frac{\log_3 x}{\beta(1-\beta)}}\right) \ll \frac1{(\log_2 x)^2 \sqrt{\log_3 x}}.$$
On the other hand, the left hand side of \eqref{eq:RpNormal} is
\[\frac px \!\! \sum_{\substack{n \le x: \, p|n\\R_p(n)<\beta+\frac{t}{\sqrt{\log_2 x}}}} \!\! 1 \ \le \ \frac px \!\! \sum_{\substack{n \le x: \, p|n\\R_p(n)<\beta - \frac{\sqrt{\log_3 x}}{\sqrt{\log_2 x}}}} \!\! 1 \ = \  \Phi\left(-\sqrt{\frac{\log_3 x}{\beta(1-\beta)}}\right) + O\left(\frac1{(\log_2 x)^{1/3}}\right) \ll \frac1{(\log_2 x)^{1/3}}, \] %
where we have used the assertion of the theorem for $t=-\sqrt{\log_3 x}$, that we established before. %

Finally, for $t \ge \sqrt{\log_3 x}$, the reasoning is analogous: by \eqref{eq:GaussianTail}, the right hand side of \eqref{eq:RpNormal} is %
\begin{multline*}
1+O\left(\frac1t \exp\left(-\frac{t^2}{2\beta(1-\beta)}\right) + \frac1{(\log_2 x)^{1/3}}\right)\\
= 1+O\left(\frac1{\sqrt{\log_3 x}} \exp\left(-\frac{\log_3 x}{2\beta(1-\beta)}\right) + \frac1{(\log_2 x)^{1/3}}\right) = 1 + O\left(\frac1{(\log_2 x)^{1/3}}\right),
\end{multline*}
whereas by the $t=\sqrt{\log_3 x}$ case of the theorem (that we already proved), 
\begin{align*}
1 \ge \frac px \sum_{\substack{n \le x: \, p|n\\R_p(n)<\beta+\frac{t}{\sqrt{\log_2 x}}}} 1 &\ge \frac px \sum_{\substack{n \le x: \, p|n\\R_p(n)<\beta  +\frac{\sqrt{\log_3 x}}{\sqrt{\log_2 x}}}} 1\\ &= \Phi\left(\sqrt{\frac{\log_3 x}{\beta(1-\beta)}}\right) + O\left(\frac1{(\log_2 x)^{1/3}}\right) = 1 + O\left(\frac1{(\log_2 x)^{1/3}}\right) 
\end{align*}
showing that the left hand side is also $1+ O\left(\frac1{(\log_2 x)^{1/3}}\right)$.
This completes the proof. \qedhere

\section{Proof of Theorem \ref{thm:avglogmid}}
We start by writing \begin{align}
\frac 1x \sum_{n \le x} \log P^{\left(\frac 12 \right)}(n) = \frac 1x\sum_{p \le x}  M^{\left( \frac 12 \right)}_p(x)\log p. \label{eq:logmidprime}
\end{align}
The trivial bound $M^{\left( \frac 12 \right)}_p(x)\leq \frac{x}{p}$ shows that the contribution to the above sum from $p \le \exp\left(\sqrt{\log x}\right)$ is
\[\frac 1x\sum_{p \le \exp(\sqrt{\log x})}  M^{\left( \frac 12 \right)}_p(x) \log p < \sum_{p \le \exp(\sqrt{\log x})} \frac{\log p}p \ll \sqrt{\log x},\]
and for $\sqrt{x}<p\leq x$ we have $M^{\left( \frac 12 \right)}_p(x)=1$ showing that the contribution from such $p$ is $O(1)$.  Next we bound the contribution from those $p$ with $\exp\left( (\log x)^{0.999}\right)<p\leq \sqrt{x}$.  

The same arguments as in the proof of Theorem \ref{thm:MpaBound} (claim (i)) show that $M^{\left( \frac 12 \right)}_p(x)\ll x/p(\log x)^{0.42}$ uniformly for $p \in (\exp((\log x)^{0.999}), \sqrt x]$: indeed once again, any $n$ counted in $M^{\left( \frac 12 \right)}_p(x)$ either has $\Omega(n) \le 0.229 \log_2 x$ or has more than $\frac{0.229}2 \log_2 x$ many prime factors (counted with multiplicity) exceeding $\exp((\log x)^{0.999})$. As such, two applications of Lemma \ref{lem:HT} with the set $E$ being the full set of primes or the set of primes exceeding $\exp((\log x)^{0.999})$ respectively, show that the contribution of both of these possibilities is $\ll x/p(\log x)^{0.42}$, as desired. Consequently, the total contribution from the primes $p \in (\exp((\log x)^{0.999}), \sqrt x]$ is
$$\frac 1x\sum_{\exp((\log x)^{0.999})<p\leq \sqrt{x}}  M^{\left( \frac 12 \right)}_p(x)\log p \ll \frac 1{(\log x)^{0.42}}\sum_{p\le x} \frac{\log p}p \ll (\log x)^{0.58},$$
which is also negligible for our purposes. 

It thus remains to consider primes $\exp\left(\sqrt{\log x}\right) < p \leq \exp\left( (\log x)^{0.999}\right)$, for which we can use Theorem \ref{thm:middle} to estimate $M^{\left( \frac 12 \right)}_p(x)$.  Thus the sum \eqref{eq:logmidprime} for $p$ in this range is
\begin{align}
\left(1+O\left(\sqrt{\frac{\log_3 x}{\log_2 x}}\right)\right)\sum_{\exp\left(\sqrt{\log x}\right) < p \leq \exp\left( (\log x)^{0.999}\right)}  C_\beta \displaystyle{\frac{(\log x)^{\beta + 2\sqrt{\beta(1-\beta)}-1}}{p\sqrt{\log_2 x}}} \label{eq:rewrittenlogsum}
\end{align}
where $\beta=\frac{\log_2 p}{\log_2 x}$ is a function of $p$.  Rewriting the sum above as an integral and using the prime number theorem we have 
\begin{align}
 \sum_{\exp\left(\sqrt{\log x}\right) < p \le \exp\left( (\log x)^{0.999}\right)} \!\!\!\!\! &C_\beta \displaystyle{\frac{(\log x)^{\beta + 2\sqrt{\beta(1-\beta)}-1}}{p\sqrt{\log_2 x}}} = \int_{\exp\left(\sqrt{\log x}\right)}^{\exp\left( (\log x)^{0.999}\right)} \!\! C_\beta \displaystyle{\frac{(\log x)^{\beta + 2\sqrt{\beta(1-\beta)}-1}}{t\sqrt{\log_2 x}}} \mathrm d \pi(t) \nonumber
    \\
    = \int_{\exp\left(\sqrt{\log x}\right)}^{\exp\left( (\log x)^{0.999}\right)} &C_\beta \displaystyle{\frac{(\log x)^{\beta + 2\sqrt{\beta(1-\beta)}-1}}{t \log t \sqrt{\log_2 x}}}\mathrm d t \ + \ O\!\left(\exp(-K_0(\log x)^{1/4})\right)\label{eq:SumIntRel1}
\end{align}
for some absolute constant $K_0>0$, where in the integrals we have $\beta \coloneqq \log_2 t/\log_2 x$. Here to pass to the second line above, we have noted that writing $f(t) \coloneqq C_\beta \displaystyle{\frac{(\log x)^{\beta + 2\sqrt{\beta(1-\beta)}-1}}{t\sqrt{\log_2 x}}}$ and $\err(t) \coloneqq \pi(t) - \text{li}(t) \ll t\exp(-K\sqrt{\log t})$, the function $f(t)$ is monotonically decreasing for all sufficiently large $x$ (and for $t \ge \exp(\sqrt{\log x})$),\footnote{To see this, we write $\log f(t) \eqqcolon F_1(\beta) + F_2(\beta) \log_2 x - \log t - \frac12\log_3 x$ for certain differentiable functions $F_1, F_2$, and differentiate both sides of this identity with respect to $t$ to obtain $f'(t)/f(t) < -1/t + O(1/t\log t) < 0$ for all sufficiently large $x$, uniformly in $\beta = \log_2 t/\log_2 x \in [0.5, 0.999]$.} so that two applications of the Riemann-Stieltjes integration by parts yield (with $K_0 \coloneqq K/2$),
\begin{align*}
\int_{\exp\left(\sqrt{\log x}\right)}^{\exp\left( (\log x)^{0.999}\right)} f(t) \, \mathrm d\err(t) &= -\int_{\exp\left(\sqrt{\log x}\right)}^{\exp\left( (\log x)^{0.999}\right)} \err(t) f'(t) \, \mathrm dt + O\!\left(\exp(-K_0(\log x)^{1/4})\right)\\
&\ll \int_{\exp\left(\sqrt{\log x}\right)}^{\exp\left( (\log x)^{0.999}\right)} t\exp(-K \sqrt{\log t}) f'(t) \, \mathrm dt + \exp(-K_0 (\log x)^{1/4})\\
& \ll \int_{\exp\left(\sqrt{\log x}\right)}^{\exp\left( (\log x)^{0.999}\right)}  f(t) \exp(-K \sqrt{\log t}) \mathrm dt + \exp(-K_0(\log x)^{1/4})\\
&\ll \exp(-K_0 (\log x)^{1/4}),
\end{align*}
establishing \eqref{eq:SumIntRel1}. Continuing from there, we find that the expression in \eqref{eq:rewrittenlogsum} is
\[\left(1+O\left(\sqrt{\frac{\log_3 x}{\log_2 x}}\right)\right) \int_{\exp\left(\sqrt{\log x}\right)}^{\exp\left( (\log x)^{0.999}\right)} \!\! C_\beta \displaystyle{\frac{(\log x)^{\beta + 2\sqrt{\beta(1-\beta)}-1}}{t \log t \sqrt{\log_2 x}}}\mathrm d t + O\!\left(\exp(-K_0 (\log x)^{1/4})\right).\]
Changing the variable of integration to $\beta$, using $t=\exp\left( (\log x)^\beta\right)$ we find that $\mathrm dt = t \log t \log_2 x \ \mathrm d\beta$ and so the main term above becomes
\begin{align} \label{eq:betavarchange} 
    \left(\sqrt{\log_2 x} +O\left(\sqrt{\log_3 x}\right)\right) \int_{1/2}^{0.999} &  C_\beta (\log x)^{\beta + 2\sqrt{\beta(1-\beta)}-1} \mathrm d\beta.
\end{align}
We find that the exponent of $\log x$ in the integrand above achieves its maximum at $B_0 \coloneqq \frac{1}{2} + \frac{1}{2\sqrt{5}}$, and its value at that point is $B_0+2\sqrt{B_0(1-B_0)}-1 = \frac{\sqrt{5}-1}{2} = \varphi'$. 

Furthermore, defining $\beta' \coloneqq \beta-B_0$, and expanding as a Taylor series around $B_0$ gives \begin{equation}
    \beta + 2\sqrt{\beta(1-\beta)}-1 = \varphi' -\frac {5\sqrt{5}}4 \beta'^2 +O\left(\beta'^3\right). \label{eq:betataylor}
\end{equation}

Inserting this into \eqref{eq:betavarchange} and treating first only the range $\beta \in \left[B_0 -\sqrt{\frac{\log_3 x}{\log_2 x}}, B_0 +\sqrt{\frac{\log_3 x}{\log_2 x}}\right]$, we find that
\begin{align*}
 \int_{B_0 -\sqrt{\frac{\log_3 x}{\log_2 x}}}^{B_0 +\sqrt{\frac{\log_3 x}{\log_2 x}}}&   C_\beta (\log x)^{\beta + 2\sqrt{\beta(1-\beta)}-1} \, \mathrm d\beta \\
 &=  \left(C_{B_0}+O\left(\sqrt{\frac{\log_3 x}{\log_2 x}}\right)\right) \int_{-\sqrt{\frac{\log_3 x}{\log_2 x}}}^{\sqrt{\frac{\log_3 x}{\log_2 x}}}  \exp\left(\left(\varphi'-\tfrac {5\sqrt{5}}4 \beta'^2 +O\left(\beta'^3\right)\right)\log_2 x\right) \mathrm d\beta' \\
 &=  \frac{C_{B_0}(\log x)^{\varphi'}}{\sqrt{\tfrac {5\sqrt{5}}4 \log_2 x}} \left(1+O\left(\frac{(\log_3 x)^{3/2}}{\sqrt{\log_2 x}}\right)\right) \int_{-\sqrt{\tfrac {5\sqrt{5}}4 \log_3 x}}^{\sqrt{\tfrac {5\sqrt{5}}4 \log_3 x}} \,\, \exp\left(-\tau^2 \right) \mathrm d\tau\\ 
 &=  C_{B_0}(\log x)^{\varphi'} \sqrt{\frac{\pi}{\frac{5\sqrt{5}}{4}\log_2 x}}\left(1+O\left(\frac{(\log_3 x)^{3/2}}{\sqrt{\log_2 x}} + \frac{{\exp\left(-\frac{5\sqrt 5}{4}\log_3 x\right)}}{\sqrt{\log_3 x}}\right)\right) \\ 
 &=  \left(C_{B_0}+O\left(\frac{(\log_3 x)^{3/2}}{\sqrt{\log_2 x}}\right)\right) (\log x)^{\varphi'} \sqrt{\frac{4\pi}{5\sqrt{5}\log_2 x}}.
 \end{align*}
Substituting the value of $C_{B_0}$ from \eqref{eq:Cbetaval}, we deduce that the contribution of $\beta \in \Big[B_0 -\sqrt{\frac{\log_3 x}{\log_2 x}},$ $B_0 +\sqrt{\frac{\log_3 x}{\log_2 x}}\Big]$ to the expression \eqref{eq:betavarchange} is
 \begin{align*}
    \left(\frac{e^{-\gamma}}{\Gamma(\varphi+1)}\frac{\varphi + 1}{\sqrt{5}}\prod_p\left(1-\frac{1}{p}\right)^{\varphi'}\left(1-\frac{\varphi'}{ p }\right)^{-1} +O\left(\frac{(\log_3 x)^{3/2}}{\sqrt{\log_2 x}}\right)\right) (\log x)^{\varphi'}. 
  \end{align*} 
We conclude by bounding the contribution from $\beta \in \left[\frac12, B_0-\sqrt{\tfrac{\log_3 x}{\log_2 x}}\right)$ and from $\beta \in \left(B_0+\sqrt{\tfrac{\log_3 x}{\log_2 x}}, 0.999\right]$ to the expression \eqref{eq:betavarchange}. Noting that the function $\beta \mapsto \beta + 2\sqrt{\beta(1-\beta)}-1$ is increasing on $\left[\frac12, B_0-\sqrt{\tfrac{\log_3 x}{\log_2 x}}\right)$ and then using the expansion \eqref{eq:betataylor}, we deduce that the contribution from this interval is 
\begin{align*}
\ll \sqrt{\log_2 x}\int_{1/2}^{B_0 -\sqrt{\frac{\log_3 x}{\log_2 x}}}  &C_\beta (\log x)^{\beta + 2\sqrt{\beta(1-\beta)}-1} \, \mathrm d\beta \\
&\ll (\log x)^{\varphi'} \exp\left(-\tfrac {5\sqrt{5}}4 \log_3 x\right)\sqrt{\log_2 x} \ \leq \ \frac{(\log x)^{\varphi'}}{(\log_2 x)^2}
\end{align*}
Finally, the contribution of the interval $\left(B_0+\sqrt{\tfrac{\log_3 x}{\log_2 x}}, 0.999\right]$ can be bounded analogously, by noting that the function $\beta \mapsto \beta + 2\sqrt{\beta(1-\beta)}-1$ is decreasing on this interval. This completes the proof of the theorem. 

\section{Concluding Remarks}

While we were able to obtain asymptotic expressions for the frequency with which $p$ is the middle or $\alpha$-positioned prime factor of an integer up to $x$ for a very wide range of values of $p$ (depending on $x$), our theorems don't quite encompass the full range of $p$.  

In particular, we are unable to treat those primes $p$ for which $\beta$ is too close to either $\beta_\alpha$ or to 1.  (We also don't consider primes $p$ fixed as $x\to \infty$, however this is treated, for the middle prime factor, in \cite{DoyOue18}.)  

The obstacles to understanding the behavior in these two remaining ranges are somewhat different.  The complication when $\beta \approx \beta_\alpha$ comes from the phase transition that occurs when $k \approx 2 \log_2 y$ in the asymptotics of the sums $\sum_{\substack{P^+(n)\le y\\\Omega(n)=k}} \frac{1}{n}$. (See Theorems \ref{thm:lichtman} and \ref{thm:ImprExtnLicht1}.)  Extending our results to $\beta$ in this range would require obtaining asymptotic expressions for these sums near this phase transition.  Balazard, Delange and Nicholas \cite{BDN} have investigated the closely related phase transition that happens in the counting problem of $N(x,k)=\#\{n\leq x \mid \Omega(n)=k\}$ for $k$ near $2\log_2 x$, and found that for such $k$ the correct asymptotic expressions for $k$ near the phase transition is a ``Gaussian transition'' between the asymptotic expressions valid for $k$ bounded away on either side of this transition value.  It seems likely that a similar transition happens in this case, however we don't investigate this any further here. 
Investigating large values of $p$ when $\beta$ approaches 1 appears to be more difficult.  Here the obstacle is the range of validity of Alladi's result (Theorem \ref{thm:Alladi}).  Extending our results would require obtaining an asymptotic expression for $\Phi_k(x,y)$ that holds for values of $k$ that are large relative to $\log u$, where $u =\frac{\log x}{\log y}$.  %
 
\bibliographystyle{amsplain}
\bibliography{refs}
\bigskip 

\end{document}